\newtheorem{theorem}{Theorem}
\newtheorem{lemma}[theorem]{Lemma}
\newtheorem{proposition}[theorem]{Proposition}
\newtheorem{corollary}[theorem]{Corollary}
\theoremstyle{definition}
\newtheorem{definition}[theorem]{Definition}
\newtheorem{example}[theorem]{Example}
\newtheorem{xca}[theorem]{Exercise}
\theoremstyle{remark}
\newtheorem{remark}[theorem]{Remark}
\newtheorem{problem}[theorem]{Problem}
\definecolor{A}{rgb}{.75,1,.75}
\newcommand{\ad}{\text{ad}\,}
\newcommand{\Ad}{\text{Ad}\,}
\newcommand{\brst}{\mathcal{B}^\bullet}
\renewcommand{\c}{\mathfrak c}
\newcommand{\C}{ \mathbb C}
\newcommand{\col}{ \text{col}}
\newcommand{\End}{\text{End}}
\newcommand{\ev}[1]{{#1}_{\bar{0}}}
\newcommand{\od}[1]{{#1}_{\bar{1}}}
\newcommand{\ga}{\mathfrak{g}}
\newcommand{\Ga}{\Gamma}
\newcommand{\gl}{\mathfrak{gl}}
\newcommand{\gr}{\text{gr} }
\newcommand{\Hom}{\text{Hom} }
\newcommand{\hf}{\frac12}
\newcommand{\lag}{\mathfrak{l}}
\newcommand{\la}{\lambda}
\newcommand{\m}{\mathfrak m}
\renewcommand{\a}{\mathfrak{m}}
\newcommand{\n}{\m'}
\newcommand{\N}{\mathbb N}
\newcommand{\op}{\text{op}}
\newcommand{\pth}[1]{{#1}^{[p]}} 
\newcommand{\pr}{\text{pr}}
\newcommand{\qn}{\mathfrak{q}_n}
\newcommand{\R}{ \mathbb R}
\newcommand{\sgl}{\mathfrak{sl}}
\newcommand{\s}{ \mathfrak{s} }
\newcommand{\sudim}{\underline{\text{dim}}\,}
\renewcommand{\t}{\mathfrak h}
\newcommand{\W}{\mc{W}_\chi}
\newcommand{\We}{\mc{W}_e}
\newcommand{\wh}{\text{Wh}}
\newcommand{\whmod}{\ga\texttt{-Wmod}^\chi}
\newcommand{\Wmod}{ \mc{W}_\chi\texttt{-mod}}
\newcommand{\Z}{ \mathbb Z }
\newcommand{\Zp}{\mathcal{Z}_p}
\newcommand{\mc}{\mathcal}
\newcommand{\mf}{\mathfrak}
\begin{document}
\title[Finite $W$-algebras]
{Nilpotent orbits and finite $W$-algebras}

\author[Weiqiang Wang]{Weiqiang Wang}
\address{Department of Mathematics, University of Virginia,
Charlottesville, VA 22904} \email{ww9c@virginia.edu}
\thanks{Partially supported by NSF and NSA grants.}

\subjclass[2000]{Primary 17B37}

\begin{abstract}
In recent years, the finite $W$-algebras associated to a
semisimple Lie algebra $\ga$ and a nilpotent element of $\ga$ have
been studied intensively from different viewpoints. In this
lecture series, we shall present some basic constructions,
connections, and applications of finite $W$-algebras. The topics
include:

\begin{itemize}
\item
 various equivalent definitions of $W$-algebras
\item independence of $W$-algebras on choices of Lagrangian
subspaces

\item $W$-algebras as quantization of Slodowy slices

\item classification of good $\Z$-gradings in type $A$

\item independence of $W$-algebras on choices of good gradings

 \item category equivalence between $W$-algebra modules and Whittaker
$\ga$-modules

 \item higher level Schur duality between $W$-algebras and
 Hecke algebras
 \item $W$-(super)algebras in prime characteristic
\end{itemize}
\end{abstract}

\maketitle

\date{3/27/06}
  \setcounter{tocdepth}{1}
\tableofcontents

\section{Introduction}

\subsection{}  \label{W:subsec:prehist}

The finite $W$-algebras\index{$W$-algebra} are certain associative
algebras associated to a complex semisimple Lie algebra $\ga$ and
a nilpotent element $e \in \ga$. In the full generality, they were
introduced by Premet \cite{Pr1, Pr2} into mathematics in a
different terminology. Some important special cases go back to
Lynch's thesis \cite{Ly}, which is in turn a generalization of
Kostant's construction in the case when $e$ is regular nilpotent.
For $e=0$, a finite $W$-algebra is simply the universal enveloping
algebra $U(\ga)$. On the other extreme, the finite $W$-algebra for
regular nilpotent $e$ is isomorphic to the center of $U(\ga)$
\cite{Ko}. In the literature of mathematical physics, the finite
$W$-algebras appeared in the work of de Boer and Tjin \cite{BT}
from the viewpoint of BRST quantum hamiltonian reduction. The history is further complicated as there is an enormous amount of work on affine $W$-algebras in the 1990's preceding the recent interest of finite $W$-algebras (and we apologize for inadequate references in this regard). See the book of Bouwknegt and Schoutens \cite{BS} and the vast references therein; also see some more precise comments below.

In the last few years, the finite $W$-algebras and their representations
have been extensively studied in mathematics with various new connections
developed.
The goal of the lecture series is to provide an introduction to
some basic aspects of the
fast-growing area of finite $W$-algebras. We sometimes sketch
the main ideas of proofs when it is not practical to reproduce
complete proofs.
These lecture notes are purely expository in nature and contain no
original results.

\subsection{}
\label{W:subsec:history}

Premet \cite{Pr1} started out with a characteristic $p$ version of
the finite $W$-algebras, and used it to settle a long-standing
conjecture of Kac and Weisfeiler \cite{WK} (cf. the review of
Jantzen \cite{Ja1} on modular representations of Lie algebras).
Subsequently, Premet \cite{Pr2} made a transition to
characteristic zero and showed that, with respect to the so-called
Kazhdan filtration\index{Kazhdan filtration} on the finite
$W$-algebras (cf. \cite{Ko, Mo}), the associated graded algebra is
the algebra of functions on the Slodowy slice through $e$
\cite{Slo}. In Premet's definition, one starts with a $\Z$-grading
$\Gamma: \ga =\oplus_{j \in \Z} \ga_j$ which is induced by a
standard $\mf{sl}_2$-triple $\{e, h, f\}$ (nowadays such a
$\Z$-grading is referred to as a Dynkin $\Z$-grading). The
definition of finite $W$-algebras $\mc W_{e, \Gamma,\lag}$ depends
on the choice of a Dynkin $\Z$-grading $\Gamma$ and also the
choice of a Lagrangian subspace $\lag$ of $\ga_{-1}$ (which is
known to be a symplectic vector space).

Gan and Ginzburg \cite{GG} developed a purely characteristic zero
treatment in a generalized form of Premet's theorem above which
allows us to regard finite $W$-algebras as quantizations of the
Slodowy slices. Moreover, Gan and Ginzburg modified Premet's
definition by starting with an arbitrary {\em isotropic} (instead
of Lagrangian) subspace $\lag$ of $\ga_{-1}$ and showed that
different choices of isotropic (in particular, Lagrangian)
subspaces $\lag$ gave rise to isomorphic finite $W$-algebras $\mc
W_{e,\Gamma}$. The approach of Gan and Ginzburg also provides a
new and more transparent proof of Skryabin's
equivalence\index{Skryabin equivalence} of categories \cite{Sk}
between a module category of $\mc W_{e,\Gamma}$ and a category of
Whittaker modules corresponding to $e$. The Whittaker $\ga$-module
was earlier studied in Kostant's paper \cite{Ko} when $e$ is
regular nilpotent.

Various key ingredients for finite $W$-algebras actually appeared earlier in the 1990's in the setting of  (classical and quantum) affine $W$-algebras. In particular, Feh\'er {\em et. al.} \cite[Section~III]{FRTW1} formulated the classical affine $W$-algebras in the generality of good $\Z$-gradings of $\ga$ (as a natural
generalization of the Dynkin $\Z$-gradings) for an arbitrary
nilpotent element $e \in \ga$, though the terminology of good $\Z$-gradings was formalized later by Kac, Roan and Wakimoto \cite{KRW}.
Feh\'er {\em et. al.} \cite[Section~3.4]{FRTW2} also recognized the key role played by a Lagrangian subspace of $\ga_{-1}$ in classical affine $W$-algebras. The quantum affine $W$-algebras associated to a regular nilpotent $e$ from the BRST approach (also called quantized Drinfeld-Sokolov reduction in the
affine setup) were studied from the viewpoint of vertex algebras by Frenkel, Kac, and Wakimoto \cite{FKW}. The finite and affine $W$-algebras
are related via a fundamental construction of Zhu algebra in the
theory of vertex algebras \cite{Zhu} (cf. Arakawa \cite{Ar}, De Sole and Kac
\cite{DK}). The good $\Z$-gradings were subsequently classified by
Elashvili and Kac \cite{EK}; also see Baur and Wallach \cite{BW}
for closely related work and further clarification.

Brundan and Goodwin \cite{BG} showed that different good
$\Z$-gradings $\Gamma$ on $\ga$ for $e$ led to isomorphic finite
$W$-algebras $\We$ (where we drop the dependence on $\Gamma$).
Actually, they worked and established the results in a somewhat
more general and flexible setting of good $\R$-gradings, which
allowed them to introduce some suitable geometric object called
good grading polytopes.

It is clear from the definition that two nilpotent elements in
$\ga$ which are conjugate under the adjoint group $G$ lead to
isomorphic finite $W$-algebras. Hence, the isomorphism classes of
finite $W$-algebras ultimately depend only on the nilpotent orbits
in $\ga$.

The representation theory of finite $W$-algebras has been
developed most adequately in type $A$ in a series of papers by
Brundan and Kleshchev \cite{BK1, BK2,BK3}. In particular, there
exists a remarkable higher level duality between finite
$W$-algebras and cyclotomic Hecke algebras, which recovers the
classical Schur duality at level one \cite{BK3}.

The constructions of finite $W$-algebras afford a natural
superalgebra generalization. In characteristic $p$, they are
developed in Lei Zhao's Virginia dissertation (see \cite{WZ1, WZ2,
Zh}), in connection with the formulation and proof of a super
analogue of the Kac-Weisfeiler conjecture when $\ga$ is a basic
classical Lie superalgebra or a queer Lie superalgebra. Many
aspects of the representation theory of finite $W$-algebras over
$\C$ afford a superalgebra generalization, and they are currently
being investigated.

\subsection{}
\label{W:subsec:outhistory}

For lack of expertise and space, we have left out many interesting
topics. We refer to Section~\ref{W:sec:open} for somewhat more
detailed comments on these topics and discussions of some open
questions. We also refer to a more recent survey by Losev
\cite{Lo5} which covers various complementary topics in detail.

Here is a quick outline on various topics which are (regretfully)
{\em not} to be covered in details in these lecture notes. Losev
\cite{Lo1} has developed a powerful deformation quantization
approach toward finite $W$-algebras. Connections between shifted
Yangians and finite $W$-algebras of type $A$ \cite{BK1} (also cf.
\cite{RS} in some special cases) allow Brundan and Kleshchev
\cite{BK2} to classify the finite-dimensional irreducible
$\We$-modules and obtain Kazhdan-Lusztig type character formula in
type $A$ (see \cite{VD} for formulation of Kazhdan-Lusztig
conjecture for finite $W$-algebras). Deep connections between
finite-dimensional modules of finite $W$-algebras and primitive
ideals have been developed \cite{Pr3, Pr4, Pr5, Gi, Lo1, Lo2, Lo4,
GRU}, and there has also been interesting work on category
$\mathcal O$ of finite $W$-algebras, see \cite{Pr3, BGK, Lo3, Go,
We}.

\subsection{}
\label{W:subsec:plan}

 The plan of the lecture notes is as follows.

In Section~\ref{W:sec:orbit}, a bijection between $\sgl_2$-triples
modulo $G$-conjugation and nonzero nilpotent orbits in $\ga$ is
formulated. We introduce the Dynkin and good $\Z$-gradings, and
then explain some of their basic properties.

In Section~\ref{W:sec:defW}, the finite $W$-algebras are defined
in several different ways and these definitions are shown to be
equivalent.

In Section~\ref{W:sec:Slodowy}, we formulate the finite
$W$-algebras based on an isotropic subspace $\lag$ of $\ga_{-1}$.
Then we introduce the Slodowy slices and the Kazhdan filtration on
finite $W$-algebras, with respect to which the associated graded
algebra is isomorphic to the algebra of functions on Slodowy
slices. The proof of the independence of the choices of $\lag$ for
the isoclasses of finite $W$-algebras is reduced to some Lie
algebra cohomology calculation, which is completed in
Section~\ref{W:sec:cat=}. The same type of argument is used to
derive Skryabin's equivalence in Section~\ref{W:sec:cat=}.

In Section~\ref{W:sec:A}, the classification of good
$\Z$-gradings in type $A$ is presented, in terms of some combinatorial
objects called pyramids.

In Section~\ref{W:sec:indep good}, different good
$\Z$-gradings are shown to lead to isomorphic finite $W$-algebras.

In Section~\ref{W:sec:duality}, we formulate and outline the
higher level Schur duality between finite $W$-algebras of type $A$
and cyclotomic Hecke algebras.

In Section~\ref{W:sec:charp}, we formulate $W$-superalgebras in
characteristic $p$, and its role in proof of super Kac-Weisfeiler
conjecture. This includes the Lie algebra counterpart as its most
important special cases.

In Section~\ref{W:sec:open}, we briefly outline some major topics
on finite $W$-algebras which are not covered in these lecture
notes, and then list some open problems.

\section{Nilpotent orbits, Dynkin and good $\Z$-gradings}
\label{W:sec:orbit}
\subsection{Basic setups for Lie algebras}
\label{W:subsec:setup}

For a Lie algebra $\mf a$, we will denote by $U(\mf a)$ the
universal enveloping algebra of $\mf a$. Let $\mf a_e$ denote the
centralizer of $e \in  \mf a$. Sometimes it is convenient to
regard $\mf a_e =\ker (\ad e)$, the kernel of the adjoint operator
$\ad e: \mf a \rightarrow \mf a$.

Let $\ga$ be a finite dimensional semisimple or reductive Lie
algebra over $\C$ equipped with a non-degenerate invariant
symmetric bilinear form $(\cdot|\cdot)$. Throughout the lectures,
we almost always take $\ga$ either to be a simple Lie algebra or
the general linear Lie algebra $\gl_N$. For $\ga =\gl_N$, we may
take the bilinear form by letting $(a|b) =\text{tr} (ab)$ where
$ab$ denotes the matrix multiplication of $a,b \in \gl_N$. An
element $e \in \mathfrak{g}$ is {\em nilpotent} means that $\ad e$
is a nilpotent endomorphism on $\ga$. For example,  a nilpotent
element in $\gl_N$ is exactly an $N\times N$ matrix with all
eigenvalues $0$.

A {\em $\Z$-grading} of Lie algebra $\ga$ is a decomposition $\ga
= \oplus_{j \in \Z} \mathfrak{g}_j$ which is always assumed to
satisfy the graded commutation relation:  $[\ga_i, \ga_j]\subseteq
\ga_{i+j}, \forall i,j.$

\subsection{Nilpotent orbits}
\label{W:subsec:NilOrb}

Denote by $G$ the adjoint group associated to the Lie algebra
$\ga$, and by $\mathcal O_e$ the $G$-adjoint orbit of $e$ in
$\ga$. We refer to Collingwood and McGovern \cite{CMc} for more on
nilpotent orbits.

Denote by $\mathcal N$ the null cone which consists of all
nilpotent elements in $\ga$. There exists a unique dense open
orbit $\mathcal O_{reg}$ in $\mathcal N$, called the {\em regular
nilpotent orbit}. A nilpotent element $e \in \ga$ is {\em regular
nilpotent} if and only if $\ga_e$ attains the minimal dimension
(which equals the rank of $\ga$). The regular nilpotent orbit
$\mathcal O_{reg}$ consists of all the regular nilpotent elements
in $\ga$. There is a unique dense open orbit in $\mathcal N
\backslash \mathcal O_{reg}$, called the {\em subregular orbit}
and denoted by $\mathcal O_{sub}$. There exists a unique {\em
minimal orbit} $\mathcal O_{min}$ in $\mathcal N$ of smallest
positive dimension. The dimension of $\mathcal O_{min}$ is known
\cite{Wa} to be $2h^\vee -2$, where $h^\vee$ is the dual Coxeter
number of $\ga$. Incidentally, the dual Coxeter number plays a key
role in representation theory of affine Lie algebras.

The set of nilpotent orbits in $\ga$ is naturally a poset
$\mathcal P$ with partial order $\le$ as follows: $\mathcal O' \le
\mathcal O$ if and only only if $\mathcal O' \subseteq
\overline{\mathcal O}$ (the closure of $\mc O$). Then $\mathcal
O_{reg}$ is the maximum and the zero orbit is the minimum in the
poset $\mathcal P$. Moreover, $\mathcal O_{sub}$ is the maximum in
the poset $\mathcal P \backslash \{\mathcal O_{reg}\}$, while
$\mathcal O_{min}$  is the minimum in the poset $\mathcal P
\backslash \{0\}$.

\begin{example}  \label{W:ex:prin sl2}
Let $\ga =\gl_n$. Every nilpotent matrix in $\gl_n$ is conjugate
to a matrix in Jordan canonical form. It follows that the
nilpotent orbits in $\ga$ are parameterized by the partitions $\la
=(\la_1, \la_2,\ldots)$ of $n$, where the orbit $\mathcal O_\la$
contains the matrix in standard Jordan form $\text{diag}
(J_{\la_1}, J_{\la_2}, \ldots)$, and we have denoted by $J_k$ the
$k\times k$ Jordan block.

Associated to the four distinguished partitions $(n)$, $(n-1,1)$,
$(2,1^{n-2})$, $(1^n)$ are the orbits $\mathcal O_{reg}$,
$\mathcal O_{sub}$,  $\mathcal O_{min}$ and the zero orbit
$\{0\}$, respectively.
\end{example}

\subsection{Jacobson-Morozov Theorem} \label{W:JacobsonM}

Associated to a nonzero nilpotent element $e \in \ga$, there
always exists $\{e, h, f\} \subseteq \ga$ which satisfy the
$\sgl_2$ commutation relation:
$$
[e,f]=h, \quad [h,e]=2e, \quad  [h,f]=-2f.
$$ We will
refer to this as a {\em standard $\sgl_2$-triple} or simply an
{\em  $\sgl_2$-triple}.

This statement is known as the {\em Jacobson-Morozov Theorem}, and
it can be proved by induction on the dimension of $\ga$.
See Carter \cite[5.3]{Ca} or \cite[3.3]{CMc}.

\begin{example} \label{W:ex:regular}
Let $\ga =\gl_n$, $e =J_n$, $h =\text{diag}\,(n-1, n-3, \ldots,
3-n, 1-n)$, and let
$$
f =\begin{pmatrix} 0 &   &   & 0 \\
a_1 & 0 &   & \\
&  \ddots & \ddots & \\
0 & & a_{n-1}&0
\end{pmatrix}
$$
with exactly one nonzero (sub)diagonal with entries $a_i =i(n-i)$
for $1\le i\le n-1$.
Then $\{e,h,f\}$ form a standard $\sgl_2$-triple.
\end{example}

\subsection{The Dynkin gradings}
\label{W:Dynkin gr}

By the representation theory of $\sgl_2 =\C\{e, h, f\}$, the
eigenspace decomposition of the adjoint action $\ad h: \ga
\rightarrow \ga$ provides a $\Z$-grading
$$
\ga =\bigoplus_{j\in \Z} \ga_j,
$$
that is, $\ga_j =\{x\in \ga \mid \ad h (x) =jx\}.$
A $\Z$-grading of $\ga$ arising from $\sgl_2$ this way will be
referred to as a {\em Dynkin $\Z$-grading}. A Dynkin grading affords
the following favorable properties:
\begin{align}
 & e \in \ga_2,  \label{W:deg=2}\\
 \ad e: & \ga_j \rightarrow \ga_{j+2}
  \text{ is injective for } j \le -1,  \label{W:inj}\\
 \ad e: & \ga_j \rightarrow \ga_{j+2}
  \text{ is surjective for } j \ge -1, \label{W:surj}
\\
 \ga_e & \subseteq  \oplus_{j \ge 0} \ga_j,  \label{W:vanish}  \\
 (\ga_i| \ga_j) & =0 \text{ unless } i+j =0, \label{W:pairing} \\
 \dim \ga_e & = \dim \ga_0 +\dim \ga_1. \label{W:numbersimple}
\end{align}

Note that (\ref{W:deg=2}) follows from the $\sgl_2$-triple
definition. Claims \eqref{W:inj} and \eqref{W:surj} are immediate
consequences of the highest weight theory of $\sgl_2$ when applied
to the the adjoint module $\ga$. Claim~ (\ref{W:vanish}) follows
from  \eqref{W:inj} and \eqref{W:surj}. Formulas \eqref{W:pairing}
and \eqref{W:numbersimple} are also easy, and they will be proved
in a generalized setting in Proposition~ \ref{W:goodprop} below
using only  \eqref{W:deg=2}, \eqref{W:inj}, and \eqref{W:surj}.

For $k \in \Z$, we shall denote $\ga_{>k} =\oplus_{j>k}\ga_j$.
Similarly, we define $\ga_{\geq k}, \ga_{<k}, \ga_{\leq k}$.

\subsection{The good $\Z$-gradings} \label{W:sec:goodgrading}

In this subsection, we recall the notion of good $\Z$-gradings (see \cite[(3.3)]{FRTW1} and
\cite{KRW}) and its basic properties \cite{EK}.

\begin{definition}   \label{W:def:good}
A $\Z$-grading $\Ga: \ga =\oplus_{j\in \Z} \ga_j$ for $\ga$
semisimple is called a {\em good} $\Z$-grading for $e$ if it
satisfies the conditions \eqref{W:deg=2}-\eqref{W:surj} above.
\end{definition}

For $\ga$ reductive, a good $\Z$-grading $\ga =\oplus_{j\in \Z} \ga_j$
for $e$ is required to satisfy \eqref{W:deg=2}-\eqref{W:surj}
and an additional condition that the center $\mathfrak
z (\ga)$ of $\ga$ is contained in $\ga_0$.

In particular, $\ad e: \ga_{-1} \rightarrow \ga_1$ is always a
bijection.

The following simple lemma will be repeatedly used.
\begin{lemma} \label{W:grading}
For any $\Z$-grading $\Gamma: \ga =\oplus_{j\in \Z} \ga_j$, there
exists a semisimple element $h_\Gamma \in [\ga, \ga]$ such that
$\Gamma$ coincides with the eigenspace decomposition of $\ad
h_\Gamma$, i.e. $\ga_j =\{x \in \ga \mid [h_\Gamma, x] =jx\}$.
\end{lemma}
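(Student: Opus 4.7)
The plan is to exhibit $h_\Gamma$ as the element whose adjoint action is the ``degree operator'' of the grading, and then verify this degree operator is indeed an inner derivation coming from a semisimple element of $[\ga,\ga]$.

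First I would define a linear map $D : \ga \to \ga$ by declaring $D(x) = jx$ for every homogeneous $x \in \ga_j$ and extending by linearity. The graded commutation relation $[\ga_i, \ga_j] \subseteq \ga_{i+j}$ is precisely the statement that $D$ is a derivation of $\ga$: for homogeneous elements $x \in \ga_i$ and $y \in \ga_j$, both sides of $D([x,y]) = [D(x),y] + [x,D(y)]$ equal $(i+j)[x,y]$.

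Next, I would invoke the classical fact that every derivation of a semisimple Lie algebra is inner (an immediate consequence of the non-degeneracy of the Killing form, or of Whitehead's first lemma). This produces $h_\Gamma$ with $D = \ad h_\Gamma$. For $\ga$ semisimple one has $[\ga,\ga] = \ga$, so the condition $h_\Gamma \in [\ga,\ga]$ is automatic; in the reductive case the hypothesis $\mathfrak{z}(\ga) \subseteq \ga_0$ built into the definition of a good $\Z$-grading forces $D$ to vanish on $\mathfrak{z}(\ga)$, so $D$ restricts to a derivation of the semisimple ideal $[\ga,\ga]$ and the same argument yields $h_\Gamma \in [\ga,\ga]$, with $\ad h_\Gamma$ still equal to $D$ on all of $\ga$ since both sides vanish on the center.

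Finally, I would check that $h_\Gamma$ is semisimple. By construction $\ad h_\Gamma = D$ is diagonalizable over $\C$ (with integer eigenvalues, the eigenspace of eigenvalue $j$ being exactly $\ga_j$), hence $\ad h_\Gamma$ is a semisimple endomorphism of $\ga$. By the uniqueness of the abstract Jordan decomposition in a semisimple (or reductive) Lie algebra, the semisimple part of $\ad h_\Gamma$ equals $\ad (h_\Gamma^{ss})$ where $h_\Gamma^{ss}$ is the semisimple part of $h_\Gamma$; since $\ad h_\Gamma$ is already semisimple, $\ad h_\Gamma = \ad h_\Gamma^{ss}$, and because $\ad$ has trivial kernel on $[\ga,\ga]$ we conclude $h_\Gamma = h_\Gamma^{ss}$ is semisimple. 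The potentially tricky step is the last one, as one has to keep straight the distinction between $\ad h$ being semisimple and $h$ itself being semisimple; the rest is essentially bookkeeping around the standard ``derivations are inner'' theorem.
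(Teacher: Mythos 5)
Your proof follows essentially the same route as the paper's: identify the degree operator $D = \partial$ as a derivation, invoke the fact that derivations of the semisimple ideal $[\ga,\ga]$ are inner, and extend the resulting equality $\partial = \ad h_\Gamma$ across the center using the hypothesis $\mathfrak z(\ga) \subseteq \ga_0$. You are somewhat more explicit than the paper in one spot — the paper simply asserts that $h_\Gamma$ is semisimple, whereas you justify it via the diagonalizability of $\ad h_\Gamma$, the compatibility of $\ad$ with the abstract Jordan decomposition, and the injectivity of $\ad$ on $[\ga,\ga]$ — and this extra step is correct and closes a gap that the paper leaves to the reader.
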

\begin{proof}
The degree operator $\partial: \ga \rightarrow \ga$ which sends $x
\mapsto jx$ for $x \in \ga_j$ is a derivation on the semisimple
Lie algebra $[\ga,\ga]$, hence an inner derivation on $[\ga,\ga]$,
given by $\ad h_\Gamma$ for some semisimple element $h_\Gamma \in
[\ga, \ga]$. Then $\partial =\ad h_\Gamma$ as derivations on $\ga
= [\ga,\ga] \oplus \mathfrak z (\ga)$ since the equality holds on
$\mathfrak z (\ga)$ too.
\end{proof}

\begin{proposition} \label{W:goodprop}
Properties (\ref{W:vanish})-(\ref{W:numbersimple}) remain to be
valid for every good $\Z$-grading $\Ga: \ga =\oplus_{j\in \Z}
\ga_j$.
\end{proposition}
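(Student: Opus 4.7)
The plan is to verify the three remaining properties one at a time, using only the defining conditions \eqref{W:deg=2}--\eqref{W:surj} of a good $\Z$-grading together with Lemma~\ref{W:grading}.

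First I would handle \eqref{W:pairing}. By Lemma~\ref{W:grading} pick a semisimple element $h_\Gamma \in [\ga,\ga]$ so that $\ga_j$ is the $j$-eigenspace of $\ad h_\Gamma$. Invariance of the form gives
\[
0 = ([h_\Gamma,x]\,|\,y) + (x\,|\,[h_\Gamma,y]) = (i+j)(x|y)
\]
for $x\in \ga_i$, $y\in \ga_j$, which forces $(x|y)=0$ whenever $i+j\neq 0$. In the reductive case, the extra condition $\mathfrak{z}(\ga)\subseteq\ga_0$ ensures that the whole center sits in degree $0$, so the argument still covers the degenerate directions where $h_\Gamma$ acts by zero.

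Next I would prove \eqref{W:vanish}. Given $x\in\ga_e$, decompose $x=\sum_j x_j$ into graded components. Since $[e,\ga_j]\subseteq \ga_{j+2}$ and the $\ga_{j+2}$ are linearly independent, the condition $[e,x]=0$ splits into $[e,x_j]=0$ for each $j$. For $j\le -1$, hypothesis \eqref{W:inj} says $\ad e$ is injective on $\ga_j$, hence $x_j=0$. Therefore $x\in \bigoplus_{j\ge 0}\ga_j$, which is \eqref{W:vanish}.

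Finally I would derive \eqref{W:numbersimple} by combining \eqref{W:vanish} with a telescoping dimension count. By \eqref{W:vanish},
\[
\dim \ga_e = \sum_{j\ge 0}\dim \ker\bigl(\ad e \colon \ga_j \to \ga_{j+2}\bigr).
\]
For $j\ge 0$ (so in particular $j\ge -1$), \eqref{W:surj} says $\ad e\colon \ga_j\to\ga_{j+2}$ is surjective, so the kernel has dimension $\dim\ga_j - \dim\ga_{j+2}$. Plugging this in and using that $\ga$ is finite dimensional (so all sums terminate),
\[
\dim \ga_e \;=\; \sum_{j\ge 0}\bigl(\dim\ga_j - \dim\ga_{j+2}\bigr) \;=\; \dim\ga_0 + \dim\ga_1,
\]
as the tail terms cancel in pairs. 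No single step looks delicate; the only point requiring a moment of care is the legitimacy of the telescoping, which is immediate from $\dim\ga<\infty$. The whole argument is uniform for semisimple and reductive $\ga$ since the center lies in $\ga_0$ and contributes equally to both sides of \eqref{W:numbersimple}.
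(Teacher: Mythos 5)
Your proofs of \eqref{W:pairing} and \eqref{W:vanish} coincide in substance with the paper's: \eqref{W:pairing} via Lemma~\ref{W:grading} and invariance of the form, and \eqref{W:vanish} from $\ga_e$ being graded and $\ad e$ injective in degrees $\le -1$. For \eqref{W:numbersimple} you take a slightly different route. The paper packages the count into a single four-term exact sequence
$0 \to \ga_e \to \ga_{\ge -1} \xrightarrow{\ad e} \ga_{\ge 1} \to 0$,
which yields $\dim\ga_e = \dim\ga_{-1} + \dim\ga_0$; to land on the stated formula one then invokes $\dim\ga_{-1}=\dim\ga_1$ (from \eqref{W:pairing}, or directly from the bijectivity of $\ad e\colon\ga_{-1}\to\ga_1$). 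You instead split $\ga_e$ degree by degree, use surjectivity in degree $j\ge 0$ to get $\dim\ker(\ad e|_{\ga_j}) = \dim\ga_j - \dim\ga_{j+2}$, and telescope. Both are correct; your version has the small advantage of producing $\dim\ga_0+\dim\ga_1$ directly without needing to compare $\ga_{-1}$ with $\ga_1$, so it does not depend on \eqref{W:pairing} at all. One cosmetic remark: your aside about ``degenerate directions'' in the reductive case is unnecessary once $h_\Gamma$ is fixed by Lemma~\ref{W:grading} — the identity $(i+j)(x|y)=0$ already covers every graded piece, including the center, since the center sits in degree $0$ by hypothesis.
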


\begin{proof}
Clearly $\ga_e$ is a $\Z$-graded Lie subalgebra of $\ga$, and
(\ref{W:vanish}) follows from (\ref{W:inj}).

For $x \in \ga_i, y \in \ga_j$, we have $-i (x|y) =([x,h_\Gamma] |
y) =(x | [h_\Gamma,y]) =j (x|y)$ by Lemma~\ref{W:grading}. This
implies (\ref{W:pairing}).

Finally, (\ref{W:numbersimple}) follows by an exact sequence of
vector spaces:
\begin{displaymath}
0 \longrightarrow \ga_e {\longrightarrow} \ga_{-1} +\ga_0
+\ga_{>0}
    \overset{\ad e}{\longrightarrow}
     \ga_{>0}  \longrightarrow 0
  \end{displaymath}
which is well-defined by \eqref{W:inj} and \eqref{W:surj}.
\end{proof}

\begin{example}  \label{W:ex:e=0}
Associated to $e=0$, we have a good $\Z$-grading with the whole
$\ga$ concentrated on degree zero, i.e. $\ga_0 =\ga$.
\end{example}

\begin{example} \label{W:ex:sl3}
Let $\ga =\gl_3$ and $e =E_{13}.$  

\begin{enumerate}
\item Let $h =\text{diag}\,(1, 0, -1)$ and $f=E_{31}$. Then
$\{e,h,f\}$ forms an $\sgl_2$-triple, and it gives rise to a
Dynkin $\Z$-grading of $\gl_3$ for $e \in \ga_2$ whose degrees on
the matrix units $E_{ij}$ are listed in the matrix below:
$$
\begin{pmatrix} 0 &   1   & 2 \\
-1 & 0 & 1 \\
-2 & -1 &0
\end{pmatrix}
$$
A basis of the  ($5$-dimensional) centralizer $\ga_e$ consists of
$2$ degree-zero elements $E_{11}+E_{33}, E_{22}$, $2$ degree-one
elements $E_{12},E_{23}$, and a degree-two element $E_{13}$.

\item The eigenspace decomposition of the diagonal element
$\text{diag}\ (1, 1, -1)$ gives rises to a good (but non-Dynkin)
$\Z$-grading for $e =E_{13}$ whose degrees on the matrix units
$E_{ij}$ are listed in the matrix below:
$$
\begin{pmatrix} 0 &   0   & 2 \\
0 & 0 & 2 \\
-2 & -2 &0
\end{pmatrix}
$$
Note now $\ga_{-1} =0$.
\end{enumerate}

In short, {\em Dynkin is good}, but {\em being good is not good
enough for being Dynkin.}
\end{example}

\begin{lemma} \label{W:inj=sur}
Properties \eqref{W:inj} and \eqref{W:surj} are equivalent for any
$\Z$-grading $\ga =\oplus_{j\in \Z} \ga_j$.
\end{lemma}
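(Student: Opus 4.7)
The plan is to exploit the non-degenerate invariant bilinear form to set up a duality between the maps $\ad e:\ga_j\to\ga_{j+2}$ and $\ad e:\ga_{-j-2}\to\ga_{-j}$, which will immediately pair injectivity on one side with surjectivity on the other.

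First I would invoke Lemma \ref{W:grading} to obtain a semisimple element $h_\Gamma\in[\ga,\ga]$ whose adjoint eigenspace decomposition realizes the grading $\Gamma$. Then I would repeat verbatim the short computation already used in the proof of Proposition \ref{W:goodprop}: for $x\in\ga_i$ and $y\in\ga_j$,
\[
-i(x|y)=([x,h_\Gamma]|y)=(x|[h_\Gamma,y])=j(x|y),
\]
which forces $(\ga_i|\ga_j)=0$ unless $i+j=0$. Since $(\cdot|\cdot)$ is non-degenerate on $\ga$, the restriction of the form gives a perfect pairing $\ga_j\times\ga_{-j}\to\C$ for every $j\in\Z$. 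Note that \eqref{W:inj} and \eqref{W:surj} only make sense when $e\in\ga_2$, which I assume throughout.

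Next I would use invariance of the form: for all $x,y\in\ga$,
\[
(\ad e(x)|y)=([e,x]|y)=(e|[x,y])=-(x|[e,y])=-(x|\ad e(y)).
\]
Applied with $x\in\ga_j$ and $y\in\ga_{-j-2}$, this identity says that under the perfect pairings $\ga_j\times\ga_{-j}\to\C$ and $\ga_{j+2}\times\ga_{-j-2}\to\C$, the linear map $\ad e:\ga_j\to\ga_{j+2}$ and the linear map $\ad e:\ga_{-j-2}\to\ga_{-j}$ are adjoint to each other (up to sign). By a standard linear algebra fact in finite dimensions, a map is injective if and only if its adjoint is surjective.

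Therefore $\ad e:\ga_j\to\ga_{j+2}$ is injective if and only if $\ad e:\ga_{-j-2}\to\ga_{-j}$ is surjective. Substituting $k=-j-2$, the condition $j\le -1$ becomes $k\ge -1$, so injectivity of $\ad e$ on all $\ga_j$ with $j\le -1$ is equivalent to surjectivity of $\ad e$ on all $\ga_k$ with $k\ge -1$, which is exactly the equivalence of \eqref{W:inj} and \eqref{W:surj}. There is no real obstacle here; the only thing to be careful about is the index bookkeeping to verify that the ranges $j\le -1$ and $k\ge -1$ are genuinely swapped by the duality $j\mapsto -j-2$.
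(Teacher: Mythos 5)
Your proposal is correct and follows essentially the same route as the paper's proof: both establish the non-degenerate pairing $\ga_j\times\ga_{-j}\to\C$ from the $\Z$-grading alone (without invoking \eqref{W:inj} or \eqref{W:surj}), then use invariance of the form to exhibit $\ad e:\ga_j\to\ga_{j+2}$ and $\ad e:\ga_{-j-2}\to\ga_{-j}$ as adjoints under this pairing, and finally appeal to the finite-dimensional duality between injectivity and surjectivity of adjoint maps. You have simply made explicit the sign computation and the index substitution $k=-j-2$ that the paper leaves to the reader.
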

\begin{proof}
Note the proof of \eqref{W:pairing} in
Proposition~\ref{W:goodprop} only uses the $\Z$-grading of $\ga$,
but does not use \eqref{W:inj} and \eqref{W:surj}. Then we have a
non-degenerate pairing between $\ga_i$ and $\ga_{-i}$ induced from
the non-degenerate form $(\cdot|\cdot)$ on $\ga$. The lemma
follows by this non-degeneracy and the equation $(x_i | [e, y_j])
=([x_i,e] | y_j)$ for $i +j =-2$ and $i \le -1 \le j$.
\end{proof}

Let $\Gamma: \ga =\oplus_{j \in \Z} \ga_j$ be a good $\Z$-grading
of $\ga$. Note that $h_\Gamma$ defined in Lemma~\ref{W:grading}
actually lies in $\ga_0$. Then $\ga_0 =\ga_{h_\Gamma}$ is a Levi
(reductive) subalgebra of $\ga$. Choose $\mf h$ to be a Cartan
subalgebra of $\ga_0$, and hence also a Cartan subalgebra of
$\ga$. Let $\ga =\mf h \oplus \oplus_{\alpha \in \Phi} \ga^\alpha$
be a root space decomposition. Fix a system of positive roots
$\Delta_0^+$ of $(\ga_0, \mf h)$.

\begin{lemma}  \label{W:rootgraded}
Let $\Gamma: \ga =\oplus_{j \in \Z} \ga_j$ be a good $\Z$-grading
of $\ga$. Then,
\begin{enumerate}
\item each root subspace $\ga^\alpha$ lies in $\ga_j$ for some $j
\in \Z$;

\item $\Delta^+ := \Delta_0^+ \bigcup \{\alpha \mid \ga^\alpha
\subseteq \ga_{>0}\}$ is a system of positive roots of $(\ga, \mf
h)$.
\end{enumerate}
\end{lemma}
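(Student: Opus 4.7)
The plan is to exploit the fact that the grading is governed by the semisimple element $h_\Gamma \in [\ga,\ga]$ from Lemma~\ref{W:grading}, once we recognize that $h_\Gamma$ actually lies in the chosen Cartan $\mf h$. Indeed, by Lemma~\ref{W:grading} we have $\ga_0 = \ga_{h_\Gamma}$, so $h_\Gamma$ lies in the center $\mf{z}(\ga_0)$ of the Levi $\ga_0$; since every Cartan subalgebra of a reductive Lie algebra contains its center, $h_\Gamma \in \mf h$. I will use this as the bridge between the grading and the root space decomposition.

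For part (1), once $h_\Gamma \in \mf h$, the operator $\ad h_\Gamma$ acts on a root space by a scalar: $[h_\Gamma,x] = \alpha(h_\Gamma)\,x$ for $x \in \ga^\alpha$. Since the grading is by $\Z$, the scalar $\alpha(h_\Gamma)$ must be an integer and $\ga^\alpha \subseteq \ga_{\alpha(h_\Gamma)}$.

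For part (2), I would produce a single linear functional on $\mf h^*$ whose positive part on $\Delta$ is precisely the stated $\Delta^+$. Pick a generic functional $\ell_0$ on $\mf h^*$ that realizes $\Delta_0^+$, i.e.\ $\ell_0(\alpha)>0$ for $\alpha \in \Delta_0^+$ and $\ell_0(\alpha)<0$ for $-\alpha \in \Delta_0^+$, and set $\ell(\alpha) := \alpha(h_\Gamma) + \epsilon\,\ell_0(\alpha)$ with $0 < \epsilon < 1/\max_{\alpha \in \Delta}|\ell_0(\alpha)|$. Then the perturbation $\epsilon\,\ell_0(\alpha)$ has absolute value strictly less than $1$, and so cannot flip the sign of the nonzero integer $\alpha(h_\Gamma)$: when $\ga^\alpha \subseteq \ga_j$ with $j\ne 0$, $\ell(\alpha)$ has the sign of $j$, while when $j=0$ the root $\alpha$ belongs to the root system of $\ga_0$ and $\ell(\alpha)=\epsilon\,\ell_0(\alpha)$ recovers exactly $\Delta_0^+$ as its positive part. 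Since part (1) guarantees every root sits in some $\ga_j$, $\ell$ is nonzero on all of $\Delta$, and $\{\alpha \in \Delta : \ell(\alpha)>0\}$ coincides with the $\Delta^+$ of the lemma. Hence $\Delta^+$ is a system of positive roots.

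The main obstacle is really bookkeeping rather than depth: one must be careful to verify $h_\Gamma \in \mf h$ (so the grading truly respects the root space decomposition) and to calibrate $\epsilon$ so that the nonzero integer values $\alpha(h_\Gamma)$ dominate the perturbation. No deeper difficulty arises beyond these verifications.
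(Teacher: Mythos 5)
Your proof is correct and follows essentially the same route as the paper: identify $h_\Gamma\in\mf h$ so that $\ad h_\Gamma$ acts on each $\ga^\alpha$ by the integer $\alpha(h_\Gamma)$, then realize $\Delta^+$ as the positive part of $\alpha\mapsto\alpha(h_\Gamma+\epsilon r)$ for a suitable regular $r$ and small $\epsilon>0$. You have merely made explicit two small points the paper leaves implicit, namely the observation $h_\Gamma\in\mf z(\ga_0)\subseteq\mf h$ and the quantitative bound on $\epsilon$ guaranteeing the perturbation cannot flip the sign of the nonzero integers $\alpha(h_\Gamma)$.
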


\begin{proof}
(1) Take $h_\Gamma \in \ga$ as in Lemma~\ref{W:grading}. Then
$[h_\Gamma, x] =\alpha (h_\Gamma) x$ for $x\in \ga^\alpha$ implies
that $\ga^\alpha \subseteq \ga_j$ for $j =\alpha (h_\Gamma) \in
\Z$.

(2) Choose a regular element $r\in \mf h$. Then $\epsilon r
+h_\Gamma$ for small $\epsilon >0$ is also regular, and so
$\Delta^+ =\{\alpha \in \Phi \mid \alpha (\epsilon r +
h_\Gamma)>0\}$ is a system of positive roots of $\ga$.
\end{proof}

Let $\Pi$ be the set of simple roots for $\Delta^+$, and denote
$\N =\{0,1,2,\ldots \}$. Then,
$$
\Pi = \bigcup_{j \in \N} \Pi_j, \qquad
 \text{where } \Pi_j =\{\alpha \in \Pi \mid \ga^\alpha \subseteq
 \ga_j\}.
$$
\begin{lemma} \label{W:weighted}
For a good $\Z$-grading $\Gamma$, we have $\Pi =\Pi_0 \cup \Pi_1
\cup \Pi_2$.
\end{lemma}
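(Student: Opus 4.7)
The plan is to argue by contradiction: suppose some simple root $\alpha \in \Pi_j$ has $j \ge 3$, and derive a decomposition of $\alpha$ into a sum of two positive roots, which is incompatible with $\alpha$ being simple.

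First I would pick a nonzero root vector $x_\alpha \in \ga^\alpha \subseteq \ga_j$. Since $j - 2 \ge 1 \ge -1$, the surjectivity in \eqref{W:surj} gives $y \in \ga_{j-2}$ with $x_\alpha = [e, y]$. Next I would expand both $e \in \ga_2$ and $y \in \ga_{j-2}$ in the root space decomposition $\ga = \mf h \oplus \bigoplus_{\beta \in \Phi} \ga^\beta$. Crucially, $\mf h \subseteq \ga_0$ by our choice of Cartan, so for $j \ge 3$ neither $\ga_2$ nor $\ga_{j-2}$ meets $\mf h$; hence
\[
e = \sum_\beta e_\beta, \qquad y = \sum_\gamma y_\gamma,
\]
with each $e_\beta \in \ga^\beta \subseteq \ga_2$ and each $y_\gamma \in \ga^\gamma \subseteq \ga_{j-2}$.

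Expanding $x_\alpha = [e,y] = \sum_{\beta,\gamma} [e_\beta, y_\gamma]$ and matching the $\ga^\alpha$ component, I would observe that each bracket $[e_\beta, y_\gamma]$ lies in $\ga^{\beta+\gamma}$ (noting $\beta + \gamma = 0$ is impossible, since $\beta + \gamma$ has degree $j \ge 3 \ne 0$). Therefore at least one summand is nonzero with $\beta + \gamma = \alpha$, where $\beta$ has degree $2$ and $\gamma$ has degree $j-2 \ge 1$. By Lemma~\ref{W:rootgraded}(2), any root whose root space sits in $\ga_{>0}$ is positive, so both $\beta$ and $\gamma$ lie in $\Delta^+$. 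This contradicts the simplicity of $\alpha$, completing the proof.

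The main (small) obstacle is being careful about the $\mf h$-components when decomposing $e$ and $y$; this is where the hypothesis $j \ge 3$ is used, since for $y \in \ga_{j-2}$ we must know $\mf h \cap \ga_{j-2} = 0$, which holds precisely because $j - 2 \neq 0$ and $\mf h \subseteq \ga_0$. Everything else is a routine unraveling of the root space decomposition together with property \eqref{W:surj}.
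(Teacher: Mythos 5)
Your proof is correct, and it takes a genuinely different (though closely related) route from the paper's. The paper argues by contradiction using \emph{injectivity}: if $\beta \in \Pi_s$ with $s > 2$, then every positive root of degree $2$ must avoid $\beta$ in its support, so $[e, \ga^{-\beta}] = 0$, i.e.\ $\ga^{-\beta} \subseteq \ga_e$; this contradicts property \eqref{W:vanish} (equivalently \eqref{W:inj}), which forces $\ga_e$ to live in non-negative degrees, whereas $\ga^{-\beta} \subseteq \ga_{-s}$ with $-s < 0$. You instead use \emph{surjectivity} \eqref{W:surj} to pull $x_\alpha$ back to some $y \in \ga_{j-2}$, decompose both $e$ and $y$ into root vectors (correctly noting that the degrees $2$ and $j-2\ge 1$ are nonzero so no Cartan component appears), and extract a decomposition $\alpha = \beta + \gamma$ with $\beta, \gamma \in \Delta^+$ via Lemma~\ref{W:rootgraded}(2), contradicting simplicity. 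By Lemma~\ref{W:inj=sur} the two hypotheses \eqref{W:inj} and \eqref{W:surj} are equivalent for good gradings, so the two arguments are in a sense dual to one another. The paper's version is a touch shorter since it avoids the careful bookkeeping of matching root components in the bracket $[e,y]$; your version has the virtue of not needing the observation that $\ga_2$ lies in the subalgebra generated by $\ga^\alpha$ for $\alpha \in \Pi_0 \cup \Pi_1 \cup \Pi_2$, which the paper states but does not elaborate.
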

\begin{proof}
Assume to the contrary that there exists $\beta \in \Pi_s$ for
$s>2$. Since $e \in\ga_2$ and $\ga_2$ is contained in the
subalgebra generated by $\ga^\alpha$ with $\alpha \in \Pi_0 \cup
\Pi_1 \cup \Pi_2$, we have $[e, \ga^{-\beta}] =0$, or in other
words, $\ga^{-\beta} \subseteq \ga_e$.
This contradicts with the fact \eqref{W:vanish} that $\ga_e$ has
$\N$-grading as $\ga^{-\beta} \subseteq \ga_{-s}$.
\end{proof}

\subsection{A bijective map $\Omega$}
\label{W:bijection}

Clearly, the group $G$ acts on the collection of $\sgl_2$-triples
in $\ga$ by conjugation. Define a map
 \begin{align*}
  \Omega: \{ \text{$\sgl_2$-triples}\}/ G \longrightarrow &
 \{\text{nonzero nilpotent orbits}\}, \\
  \Omega (\{e,h, & f\}) =\mathcal O_e.
 \end{align*}

 \begin{theorem}  \label{W:th:omega}
The map $\Omega$ is a bijection.
 \end{theorem}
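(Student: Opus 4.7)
My strategy is to verify well-definedness, surjectivity, and injectivity of $\Omega$ separately. Well-definedness is automatic: if two $\sgl_2$-triples are $G$-conjugate, so are their $e$-components, hence they determine the same orbit. Surjectivity is an immediate consequence of the Jacobson--Morozov theorem (Subsection~\ref{W:JacobsonM}): any nonzero nilpotent $e$ extends to some $\sgl_2$-triple $\{e,h,f\}$, which $\Omega$ sends to $\mathcal O_e$.

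For injectivity, suppose $\Omega(\{e,h,f\}) = \Omega(\{e',h',f'\})$. After replacing the second triple by a $G$-conjugate we may assume $e = e'$, and the goal is to produce $g \in G_e$ with $\Ad(g)(h,f) = (h',f')$. I would first reduce to finding $g$ with just $\Ad(g) h = h'$, by the following uniqueness observation: the relation $[h,f] = -2f$ forces $f \in \ga_{-2}$ with respect to the Dynkin grading determined by $h$, and the injectivity of $\ad e : \ga_{-2} \to \ga_0$ from \eqref{W:inj} makes the equation $[e,f] = h$ have at most one solution in $\ga_{-2}$. Consequently, $f$ is determined by the pair $(e,h)$, and likewise $f'$ by $(e,h')$, so $\Ad(g) h = h'$ will automatically yield $\Ad(g) f = f'$.

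To exhibit such a $g$, note that $[h,e] = [h',e] = 2e$ gives $h' - h \in \ga_e$. By \eqref{W:vanish} applied to the $h$-grading, $\ga_e \subseteq \bigoplus_{j \ge 0} \ga_j$, so we may write $h' - h = z_0 + z_+$ with $z_0 \in \ga_e \cap \ga_0$ and $z_+ \in \bigoplus_{j > 0}(\ga_e \cap \ga_j)$. The plan is then to proceed in two stages. First, conjugate by an element of the reductive part $L_e \subseteq G_e$ (whose Lie algebra is $\ga_e \cap \ga_0$) to kill $z_0$; this step uses the semisimplicity of $h'$ and the conjugacy of Cartan subalgebras in the reductive Lie algebra $\ga_e \cap \ga_0$. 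Second, having arranged $h' - h \in \ga_e \cap \ga_{>0}$, successively apply $\exp(\ad x)$ for homogeneous $x \in \ga_e \cap \ga_j$, $j > 0$, to cancel the graded components of $z_+$ one at a time; the process terminates because $\ga_e \cap \ga_{>0}$ is finite-dimensional and nilpotent, and each $\ad x$ strictly raises degree.

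The main obstacle is the first stage --- eliminating $z_0$ --- which genuinely requires the semisimplicity of $h'$ (not only of $h$) and is the classical content of the Kostant--Mal'cev uniqueness theorem for $\sgl_2$-triples. For a careful execution of this step I would defer to \cite[\S 3.4]{CMc} or \cite[\S 5.3]{Ca}.
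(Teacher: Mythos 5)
Your overall architecture—surjectivity from Jacobson--Morozov, reduction to the case of a shared $e$, reduction of injectivity to matching the $h$'s via the rigidity lemma, then conjugating inside $G_e$—follows the paper and the standard reference \cite[3.4]{CMc}. Your ``stage 2'' (successively applying $\exp(\ad x)$ for $x \in \ga_e \cap \ga_j$, $j>0$, to absorb the positive-degree components) is exactly the paper's argument that $\Ad U_e(h) = h + \mf u_e$.

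The genuine gap is in your ``stage 1,'' which as described cannot work. The Lie algebra $\ga_e \cap \ga_0$ of $L_e$ centralizes both $e$ and $h$, so every $g \in L_e$ satisfies $\Ad(g) h = h$ and preserves the $\ad h$-grading. Consequently $\Ad(g) h' - h = \Ad(g)(h'-h) = \Ad(g) z_0 + \Ad(g) z_+$ with $\Ad(g) z_0 \in \ga_0$, and the degree-zero component can only vanish if $z_0 = 0$ to begin with. Conjugacy of Cartans in $\ga_e \cap \ga_0$ is thus of no help here; no $L_e$-conjugation (indeed no grading-preserving conjugation) can kill a nonzero $z_0$.

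What you are missing is that $z_0 = 0$ automatically. Since $h' - h = [e,f'] - [e,f] = [e, f'-f]$, the difference $h' - h$ lies not just in $\ga_e$ but in $\mf u_e := \ga_e \cap [\ga,e]$. By the $\sgl_2$ representation theory applied to the adjoint module, a highest weight vector of weight $0$ spans a trivial summand $V(0)$, on which $\ad e$ acts by zero, so such a vector cannot lie in the image of $\ad e$. Hence $\mf u_e \cap \ga_0 = 0$, i.e.\ $\mf u_e = \bigoplus_{j>0}\ga_e \cap \ga_j$. This is precisely the observation the paper makes, and it is what eliminates your ``main obstacle'' at the outset: there is no $z_0$ to kill, and your stage 2 alone (the $U_e$-conjugation) completes the argument.
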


\begin{proof}[Sketch of a proof]
According to Jacobson-Morozov Theorem in \ref{W:JacobsonM}, the
map $\Omega$ is surjective. A theorem of Kostant asserts that the
map $\Omega$ is also injective, as we sketch below (see \cite[3.4]{CMc}
for details).

To that end, take two $\sgl_2$-triples which are mapped by
$\Omega$ to the same nilpotent orbit $\mathcal O_e$. Applying a
$G$-conjugation, we may assume without loss of generality that the
two triples are $\{e,h,f\}$ and $\{e,h',f'\}$, that is, they share
the same $e$.

Let $\mathfrak u_e := \ga_e \cap [\ga, e]$.
Note that $h'-h \in \mathfrak u_e$ and $\mathfrak
u_e$ is an $\ad h$-invariant, nilpotent ideal of $\ga_e$. Actually
$\mathfrak u_e = \oplus_{j>0} \ga_e \cap \ga_j$. Denote by $U_e$
the subgroup of $G$ with Lie algebra $\mf u_e$.

By using the fact that $\mf u_e$ is a $\N$-graded nilpotent ideal,
one shows easily that $\Ad U_e (h) =h +\mathfrak u_e$. Hence,
there exists $x \in U_e$ such that $\Ad x (h) =h'$.

Note that we automatically have $\Ad x (e) =e$ for $x\in U_e$. Now
the injectivity of $\Omega$ follows from  the {\em rigidity of
$\sgl_2$} below.

\begin{lemma}  \label{W:rigid}
Let $\{e,h,f\}$ and $\{e,h,f'\}$ be standard $\sgl_2$-triples of $\ga$.
Then $f=f'$.
\end{lemma}
\end{proof}

\begin{xca}   \label{W:xca:rigidity}
Prove Lemma~\ref{W:rigid}.
\end{xca}

\begin{remark}  \label{W:Molcev}
If one works a bit harder (cf. \cite[3.4]{CMc}), one recovers a theorem
of Molcev which asserts the injectivity of the map
 \begin{align*}
  \Theta: \{ \text{$\sgl_2$-triples}\}/ G  \longrightarrow &
 \{\text{nonzero semisimple orbits}\}, \\
  \Theta (\{e,h, & f\})  =\mathcal O_h.
 \end{align*}
\end{remark}

Given an $\sgl_2$-triple $\{e,h,f\}$, by Lemma~\ref{W:weighted}
(which applies to the special case of Dynkin grading $\ad h$) we
have a system of simple roots of $\ga$ whose weights on $h$ only
take values from $\{0,1,2\}$. Therefore the number of semisimple
orbits with such weight restriction is $\le 3^{\text{rank}(\ga)}$.
It now follows by the properties of $\Omega$ and $\Theta$ above
that the number of nilpotent orbits in $\ga$ is finite.

An alternative way of obtaining the finiteness of nilpotent orbits
of $\ga$ of classical type is to use the parametrization by
partitions (cf. \cite[Chapter~1]{Ja2}). But such an alternative ceases to
work well for Lie algebras $\ga$ of exceptional type.

\section{Definitions of $W$-algebras}
\label{W:sec:defW}
\subsection{The endomorphism algebra definition}

Assume we are given a reductive Lie algebra $\ga$ with a
non-degenerate invariant bilinear form $(\cdot|\cdot)$, an
nilpotent element $e \in \ga$, and a good $\Z$-grading $\Ga:
\ga=\oplus_{j\in \Z} \ga_j$ for $e$.

There exists a unique $\chi \in \ga^*$ such that $\chi (x) = (x |
e)$. Define a bilinear form $\langle\cdot,\cdot\rangle$ on
$\mathfrak{g}_{-1}$ as follows:
$$
\langle \cdot,\cdot\rangle: \ga_{-1} \times \ga_{-1} \rightarrow \C,
\qquad \langle x,y \rangle := ([x,y] |e) =\chi([x,y]).
$$
The following lemma was noticed earlier \cite[3.4]{FRTW2} in the setting for classical affine $W$-algebras.
\begin{lemma}  \label{W:skewform}
The bilinear form $\langle\cdot,\cdot\rangle$ on $\ga_{-1}$ is
skew-symmetric and non-degenerate.
\end{lemma}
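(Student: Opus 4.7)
\medskip

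\noindent\textbf{Proof proposal.} The skew-symmetry is immediate from the antisymmetry of the bracket: for $x,y \in \ga_{-1}$,
$$
\langle y, x \rangle = ([y,x]\,|\,e) = -([x,y]\,|\,e) = -\langle x, y \rangle,
$$
so there is nothing to do there.

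For non-degeneracy, the idea is to rewrite $\langle\cdot,\cdot\rangle$ as the pull-back of the invariant form along $\ad e$, and then combine the two key features of a good $\Z$-grading: the bijectivity of $\ad e \colon \ga_{-1} \to \ga_1$ (which is pointed out in Subsection~\ref{W:sec:goodgrading} as a consequence of \eqref{W:inj}--\eqref{W:surj}), and the non-degeneracy of the pairing between $\ga_{-1}$ and $\ga_1$ induced by $(\cdot|\cdot)$ (which is part of \eqref{W:pairing} in Proposition~\ref{W:goodprop}). Concretely, by invariance of $(\cdot|\cdot)$,
$$
\langle x, y\rangle = ([x,y]\,|\,e) = (y\,|\,[e,x]), \qquad x,y \in \ga_{-1}.
$$
Suppose $x \in \ga_{-1}$ satisfies $\langle x, y \rangle = 0$ for all $y \in \ga_{-1}$. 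Then $(y\,|\,[e,x]) = 0$ for all $y \in \ga_{-1}$. Since $[e,x] \in \ga_1$ and the pairing $\ga_{-1} \times \ga_1 \to \C$ coming from $(\cdot|\cdot)$ is non-degenerate by \eqref{W:pairing}, we conclude $[e,x]=0$. But $\ad e \colon \ga_{-1} \to \ga_1$ is injective, so $x=0$.

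There is essentially no obstacle: both ingredients have already been proved in Section~\ref{W:sec:orbit}, and the whole argument is a two-line manipulation using invariance of the form. The only thing to be a little careful about is to state explicitly that one is invoking a good grading (not an arbitrary $\Z$-grading), since both the bijectivity of $\ad e$ on $\ga_{-1}$ and the non-degeneracy of the $\ga_{-1}\times\ga_1$ pairing rely on the axioms \eqref{W:deg=2}--\eqref{W:surj}.
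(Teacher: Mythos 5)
Your proof is correct and is essentially the paper's proof: the paper uses the identity $\langle x,y\rangle = (x\,|\,[y,e])$ together with the bijectivity of $\ad e\colon \ga_{-1}\to\ga_1$, whereas you use the equivalent identity $\langle x,y\rangle = (y\,|\,[e,x])$ and invoke injectivity of $\ad e$ plus the non-degeneracy of the $\ga_{-1}\times\ga_1$ pairing — the two are dual formulations of the same two-line argument.
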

\begin{proof}
The skew-symmetry follows by definition. The non-degeneracy
follows by the bijection $\ad e:\mathfrak{g}_{-1} \rightarrow
\mathfrak{g}_1$ and the identity $\langle x,y \rangle = (x | [y,e])$.
\end{proof}

It follows that $\ga_{-1}$ is even-dimensional. Pick a Lagrangian
(= maximal isotropic) subspace $\mathfrak l$ of
$\mathfrak{g}_{-1}$ with respect to the form
$\langle\cdot,\cdot\rangle$. Then $\dim \mathfrak l =\hf \dim
\ga_{-1}$. Introduce the following important nilpotent subalgebra
of $\mathfrak{g}$:
$$
\m := \mathfrak l \bigoplus (\bigoplus_{j \leq -2} \mathfrak{g}_j).
$$

\begin{xca}  \label{W:xca:halfdim}
Prove that
$$
\dim \m =\hf \dim \mc O_e.
$$
\end{xca}

The restriction of $\chi$ to $\m$, denoted by $\chi:\m \rightarrow
\C$, defines a one-dimensional representation $\C_\chi$ of $\m$,
thanks to the Lagraingian condition on $\lag$.
Let $I_\chi^\m$ denote the kernel of the corresponding associative
algebra homomorphism $U(\m) \rightarrow \C$, i.e. the two-sided
ideal of $U(\m)$ generated by $a -\chi(a)$ for $a \in \m$. Let
$I_{\chi}$ denote the left ideal of $U(\ga)$ generated by
$a -\chi(a)$ for $a \in \m$. Define the  induced
$\mathfrak{g}$-module (called {\em generalized Gelfand-Graev
module})
\begin{equation*}
Q_{\chi} := U(\mathfrak{g}) \otimes_{U(\m)} \C_\chi \cong
U(\mathfrak{g}) /  I_{\chi}.
\end{equation*}

The {\em finite $W$-algebra} (or simply $W$-algebra in this paper)
$\W$ is defined to be the endomorphism algebra
\begin{equation}  \label{W:def1}
\W := \End_{U(\mathfrak{g})}(Q_\chi)^{\op}.
\end{equation}
This is the original definition of Premet \cite{Pr2}.

Actually, the above notions depend in addition on $\Gamma, \lag$,
and it would be more precise to write $Q_{\chi, \Ga, \lag}$ ,
$I_{\chi, \Ga, \lag}$, $\mc W_{\chi, \Ga, \lag}$, etc. But we
choose to use the simplified notations, partly because we will
eventually show that the isoclasses of finite $W$-algebras do not
depend on $\Gamma$ and $\lag$. At different occasions later on, we
may use some other indices among ${\chi, \Ga, \lag}$ to put an
emphasis on the dependence of those indices.

\begin{example}  \label{W:ex:chi=0}
Let $e=0$. Then $\chi =0$, $\ga_0 =\ga$, $\m =0$, $Q_\chi
=U(\ga)$, and $\mc W_\chi =U(\ga)$.
\end{example}

\subsection{The Whittaker model definition}
\label{W:subsec:Whit def}

Since $Q_\chi = U(g)/I_\chi$ is a cyclic module, any endomorphism
of the $\ga$-module $Q_\chi$ is determined by the image of
$\bar{1}$, where $\bar{y}$ denotes the coset $y +I_\chi$ of $y\in
U(\ga)$. The image of $\bar{1}$ must be annihilated by $I_\chi$.
Hence we obtain
the following identification of $\W$ (as the space of Whittaker
vectors in $U(\mathfrak{g}) / I_\chi$ in the terminology
introduced in a later Section~\ref{W:subsec:functors}):
\begin{eqnarray} \label{W:def2}
\W= \{\bar{y} \in U(\mathfrak{g}) / I_\chi \mid (a -\chi(a)) y \in
I_\chi, \forall a \in \m\}.
\end{eqnarray}
By definition of $I_\chi$, $\W$ can be further identified with the
subspace of $\ad \m$-invariants in $Q_\chi$:
\begin{eqnarray} \label{W:def2b}
\W= (Q_\chi)^{\ad \m} := \{\bar{y} \in U(\mathfrak{g}) / I_\chi
\mid [a,y] \in I_\chi, \forall a \in \m\}.
\end{eqnarray}

Transferred via the above identification, the algebra structure on
$\W$ is given by
$$
\bar{y}_1 \bar{y}_2 = \overline{y_1y_2}$$ for $y_i \in
U(\mathfrak{g})$ such that $ [a,y_i] \in I_\chi$ for all $a \in
\m$ and $i=1,2$.

\begin{xca} \label{W:ex:welldefined}
Without using the identification with the definition \eqref{W:def1},
check directly that
 \begin{enumerate}
\item the ideal $I_\chi$ is $\ad \m$-invariant, hence $(Q_\chi)^{\ad \m}$ in
\eqref{W:def2b} as a vector space is well-defined;

\item for $y_i$ satisfying $ [a,y_i] \in I_\chi$ for all $a \in
\m$ and $i=1,2$, we have $ [a,y_1y_2] \in I_\chi$ for all $a \in
\m$. Hence, $(Q_\chi)^{\ad \m}$ in \eqref{W:def2b} as an algebra is well-defined.
\end{enumerate}
\end{xca}
Hence, we may regard \eqref{W:def2b}, or \eqref{W:def2}, as a
second definition of $W$-algebras.

\subsection{A simplified definition for even good gradings}
\label{W:W=m inv}

In this subsection, we will assume $\ga$ is equipped with an {\em
even} good $\Z$-grading. A good $\Z$-grading $\ga=\oplus_{j\in \Z}
\ga_j$ is called {\em even}, if $\ga_j =0$ unless $j$ is an even
integer.

All the complications of choice of a Lagrangian/isotropic subspace
$\lag$ disappear for even $\Z$-gradings, since $\ga_{-1} =0$. Now
we have $\mathfrak{m} = \oplus_{j \leq -2} \mathfrak{g}_j$, and
a parabolic submodule
\begin{equation} \label{W:def:p}
 \mathfrak{p} := \bigoplus_{j \geq 0} \mathfrak{g}_j.
\end{equation}
It follows by the PBW theorem that
\begin{equation*}
U(\mathfrak{g}) = U(\mathfrak{p}) \bigoplus I_\chi.
\end{equation*}
The projection $\pr_\chi:U(\mathfrak{g}) \rightarrow
U(\mathfrak{p})$ along this direct sum decomposition induces an
isomorphism
$$
\overline{\pr}_\chi: U(\mathfrak{g}) / I_\chi
\stackrel{\sim}{\longrightarrow} U(\mathfrak{p}).
$$

By the above observation,  the algebra $\W$
can be regarded as a {\em subalgebra} of $U(\mathfrak{p})$. Consider
a $\chi$-twisted adjoint action of $\mathfrak{m}$ on
$U(\mathfrak{p})$ by $a . y := \pr_\chi ([a,y])$ for $a \in
\mathfrak{m}$ and $y \in U(\mathfrak{p})$. We identify $\W$ as
\begin{eqnarray}  \label{W:def:subalg}
\W = U(\mathfrak{p})^{\ad \m} := \{y \in U(\mathfrak{p}) \mid
[a,y] \in I_\chi, \forall a \in \m \}.
\end{eqnarray}
Since $\overline{\pr}_\chi: U(\mathfrak{g}) / I_\chi
\stackrel{\sim}{\rightarrow} U(\mathfrak{p})$ is an isomorphism of
$\m$-modules, the equivalence of \eqref{W:def:subalg} with the second
definition (for even $\Z$-gradings) is clear.

Hence, we may take \eqref{W:def:subalg} as a third definition of
the $W$-algebras. This is the original definition used by Kostant
and Lynch \cite{Ko, Ly} (who only considered even Dynkin
$\Z$-gradings). As observed by Brundan-Kleshchev \cite{BK1}, it
works equally well in the generality of even good gradings by
dropping ``Dynkin". Brundan-Goodwin-Kleshchev \cite[Section~2]{BGK} also has
a (necessarily) more complicated version of this definition
without assuming the grading to be even.

\begin{example}  \label{W:ex:regNil}
According to a theorem of Kostant (see Theorem~\ref{W:Kostant}
below), for a regular nilpotent element $e$ in $\ga$, $\mc W_\chi$
is isomorphic to $\mc Z (\ga)$, the center of $U(\ga)$.

Let $e=E_{12} \in \gl_2$, which is regular nilpotent. Then $\m =\C
f$ with $f =E_{21}$, and $\mf p =\C e +\C E_{11} +\C E_{22}$. A
direct computation shows that $E_{11} +E_{22}$ and $e +\frac14 h^2
-\hf h$ lie in $U(\mf p)^{\ad \m}$. (If we combine with Kostant's
theorem, then $\W =U(\mf p)^{\ad \m}$ is the polynomial algebra
generated by these two elements.)

\medskip

Let $\ga =\gl_n$ and $e =J_n$ as in Example~\ref{W:ex:regular}. Let
\begin{equation*}
\Omega(u) := \left(
\begin{array}{cccccc}
E_{11}+u+ 1 & E_{12} &  &\cdots & E_{1n}\\
1&E_{22}+u+ 2&E_{23}&&\vdots\\
0& &\ddots&\ddots & \vdots \\
\vdots&& \ddots & \ddots 
&E_{n-1,n}\\
0&\cdots&0&1&E_{nn}  +u+ n
\end{array}
\right)
\end{equation*}
The row determinant of a matrix $A = (a_{i,j})_{n \times n}$
with non-commutative entries is defined to be
\begin{equation}\label{W:detdefr}
\text{rdet} \, A  = \sum_{\pi \in S_n} \text{sgn}(\pi) a_{1,\pi 1}
\cdots a_{n,\pi n}.
\end{equation}
Write $\text{rdet} \, \Omega(u) =u^n +\sum_{i=1}^{n} w_i u^{n-i}$
for $w_i \in U(\mf p)$. As shown in \cite[Section~12]{BK1}, $w_i,
1\le i\le n$ are commuting elements which lie in $U(\mf p)^{\ad
\m}$, and they freely generate $\W = U(\mf p)^{\ad \m}$.
\end{example}

\subsection{The BRST definition}
\label{W:subsec:BRST}

There is yet another definition of $W$-algebras using the BRST
complex.

For the sake of simplicity, we will work under the assumption of
an {\em even} good $\Z$-grading $\Gamma: \ga = \oplus_{j \in \Z}
\ga_j$ for $e \in \ga_2$. In this case, we have $\m = \oplus_{j
\leq -2} \ga_j$.

Let us take another copy of $\m$, which will be denoted by $\hat
\m$.

Endow the vector space $\m^* \oplus \hat{\m}$ with the symmetric
bilinear form induced by the pairing between $\m^*$ and $\hat\m$.
The corresponding Clifford algebra on $\m^* \oplus \hat{\m}$ can
be naturally identified with $\wedge (\m^*) \otimes \wedge
(\hat{\m})$. Consider the tensor algebra of this Clifford algebra
with $U(\ga)$
$$
\brst  : = \wedge (\m^*) \otimes U(\ga) \otimes \wedge (\hat{\m}).
$$
It admits a BRST (cohomological) $\Z$-grading with the assignment
of degrees to generators in $\m^*, \ga, \hat{\m}$ to be $1, 0,
-1$, respectively. This $\Z$-grading is compatible with the
superalgebra structure on $\brst$ by declaring generators in $\ga$
to be even and  generators in $\m^* \oplus \hat{\m}$ odd.

Take a basis $\{b_i\}$ of $\m$
and let $\{f^i\}$ be its dual basis for $\m^*$.
%
%
Let $d =[\phi, -]$ be the super-derivation of $\brst$ (of BRST
degree $1$), which is defined to be the supercommutator with the
following odd element
$$
\phi =f^i (b_i -\chi (b_i)) -\hf f^i f^j [b_i, b_j]^\wedge
$$
where $a^\wedge$ 
denotes the corresponding element in $\hat{\m}$ for $a\in \m$.
Here and below we have adopted the convention of summation over
repeated indices. It is easy to check that $\phi$ is independent
of the choices of the dual bases.

In more concrete terms, one finds that, for $x \in \ga, f \in
\m^*$ and $a^\wedge \in \hat \m$ corresponding to $a \in \m
\subseteq \ga$,
\begin{align} \label{W:d formula}
\begin{split}
d(x) =  f^i [b_i, x], & \qquad
d(f)  = {\textstyle\frac{1}{2}}f^i \, \text{ad}^* b_i (f), \\
d(a^\wedge) =& a - \chi(a)   + f^i [b_i, a]^\wedge,
\end{split}
\end{align}
where we have denoted by $\text{ad}^*$ the coadjoint action.

\begin{xca}  \label{W:xca:diff}
Verify that $d^2 =0$.
\end{xca}

So we have defined the so-called BRST complex $(\brst, d)$ and we
can define as usual its cohomology $H^* (\brst)$. The BRST
definition of the $W$-algebra (cf. de Boer and Tjin \cite{BT}) is
$$
\W =H^0(\brst, d).
$$
This is justified by the following isomorphism theorem, which
confirms a conjecture of Premet \cite[1.10]{Pr2}.

\begin{theorem} \cite{DHK} \label{W:th:BRST=Wh}
The BRST cohomology $H^* (\brst)$ is concentrated in cohomological
degree $0$ and there is an algebra isomorphism between $H^0(\brst,
d)$ and $(Q_\chi)^{\ad \m}$.
\end{theorem}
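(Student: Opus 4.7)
The plan is to compute $H^*(\brst,d)$ by passing to the associated graded with respect to the Kazhdan filtration extended from $U(\ga)$ to all of $\brst$. I assign the ghosts $f^i$ (dual to $b_i \in \ga_{j_i}$) and $\hat b_i$ Kazhdan weights matched to the $\Z$-grading so that each term of $d$ in \eqref{W:d formula} is Kazhdan-weight preserving; this makes $d$ filtration-preserving, and the associated graded is $E_0 = \gr \brst \cong \wedge(\m^*) \otimes S(\ga) \otimes \wedge(\hat\m)$ with $d_0 = \gr d$ a super-derivation. The point of the passage is that on $E_0$ the differential becomes commutative/classical and decomposes cleanly into a Chevalley--Eilenberg part on $\wedge(\m^*)$ and a Koszul part on $\wedge(\hat\m)$.

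Concretely, inspection of $\gr d$ on the three types of generators shows that $(E_0, d_0)$ is the Chevalley--Eilenberg cochain complex of $\m$ with coefficients in the Koszul complex for the sequence $\{b_i - \chi(b_i)\} \subset S(\ga)$: the terms $\hat b_i \mapsto b_i - \chi(b_i)$ provide the Koszul differential cutting out the classical moment-map fiber $\mu^{-1}(\chi) = \{\xi \in \ga^* : \xi|_\m = \chi\} \subset \ga^*$, while the terms $x \mapsto f^i [b_i, x]$ and $f^i \mapsto \hf f^j \ad^* b_j(f^i)$ encode the $\m$-equivariant structure. Geometrically, $(E_0, d_0)$ computes the $\m$-invariant functions on $\mu^{-1}(\chi)$, and since $M = \exp(\m)$ acts freely on $\mu^{-1}(\chi)$ with quotient the Slodowy slice $\mc S_e = e + \ga_f$, one gets $H^k(E_0, d_0) = 0$ for $k \ne 0$ and $H^0(E_0, d_0) \cong \mc O(\mc S_e)$. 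The step I expect to be the main obstacle is this classical vanishing, which reduces to (i) the regularity of the sequence $\{b_i - \chi(b_i)\}$ in $S(\ga)$, equivalently the smoothness of $\mu^{-1}(\chi)$, and (ii) the freeness of the $M$-action on $\mu^{-1}(\chi)$; both facts follow from the bijectivity of $\ad e : \ga_{-1} \to \ga_1$ (Lemma~\ref{W:skewform} and \eqref{W:inj}--\eqref{W:surj}) combined with the Lagrangian condition on $\lag$ which controls the dimension count.

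The Kazhdan-graded pieces of $\wedge(\m^*) \otimes \wedge(\hat\m)$ are finite-dimensional, so the Kazhdan spectral sequence converges strongly; combined with the degeneration at $E_1$ for bidegree reasons (all higher differentials land in zero groups), this yields $H^*(\brst, d)$ concentrated in degree $0$ with $\gr H^0(\brst, d) \cong \mc O(\mc S_e)$. To promote this to the algebra isomorphism $H^0(\brst, d) \cong (Q_\chi)^{\ad\m}$, I would construct an explicit filtered algebra map from $(Q_\chi)^{\ad\m}$ to $H^0(\brst, d)$ by inductively lifting each $\bar y \in (Q_\chi)^{\ad\m}$ to a $d$-cocycle whose leading (ghost-degree zero) term is $y$; the $\ad\m$-invariance of $\bar y$ forces the successive obstructions to this inductive lift to be $d$-exact in the Kazhdan-filtered sense. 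Bijectivity of this map then follows by comparing associated gradeds, both of which equal $\mc O(\mc S_e)$---one by Premet's theorem identifying $\gr(Q_\chi)^{\ad\m} = \gr \W$ with $\mc O(\mc S_e)$, the other by the classical BRST computation just described.
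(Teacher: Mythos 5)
Your approach is genuinely different from the one in the paper. The paper reduces $H^*(\brst,d)$ to the Lie algebra cohomology $H^*(\m, Q_\chi)$ via a double-complex spectral sequence in ghost/antighost degrees (separating $d$ into an $\m$-homology differential $d_\m$ and an $\m$-cohomology differential $d^\m$), and only \emph{then} uses a Kazhdan-filtration spectral sequence (Theorem~\ref{W:th:2steps}) on $\wedge^\bullet\n^*\otimes Q_\chi$. You instead filter the full complex $\brst$ by the Kazhdan filtration in one shot and identify the associated-graded complex with the classical BRST complex (Koszul resolution of $\mathcal O(\mu^{-1}(\chi))$ together with a Chevalley--Eilenberg complex), then invoke the geometric input from Lemma~\ref{W:isom} to get the $E_1$-vanishing. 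This is a legitimate and conceptually attractive one-step reduction, close in spirit to the Kostant--Sternberg picture and to the deformation-quantization viewpoint; the paper's two-step route is less geometric but, as it turns out, technically safer.

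The place where your argument has a real gap is the convergence of the Kazhdan spectral sequence on $\brst$. Your justification --- that the Kazhdan-graded pieces of $\wedge(\m^*)\otimes\wedge(\hat\m)$ are finite-dimensional --- ignores the tensor factor $U(\ga)$. The Kazhdan filtration $F_nU(\ga)=\sum_{i+2k\le n}U_k(\ga)_i$ is \emph{not} bounded below as soon as the depth of the good $\Z$-grading exceeds $2$ (which happens for essentially all nonzero $e$): for fixed $x\in\ga_{-d}$ with $d>2$, the element $x^k$ has Kazhdan degree $k(2-d)\to-\infty$. Consequently the filtration on $\brst$ is exhaustive and Hausdorff but not bounded below, and strong convergence of the filtration spectral sequence is not automatic; you would need to invoke completeness of the Kazhdan filtration together with the degeneration at $E_1$ to get conditional convergence, and then argue separately that $RE_\infty$ vanishes. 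The paper's route avoids this entirely: after the ghost/antighost double complex (bounded in both directions, hence unproblematic), the Kazhdan filtration is only applied to $\wedge^\bullet\n^*\otimes Q_\chi$, where $Q_\chi$ carries a \emph{bounded-below} Kazhdan filtration (point (b) of \ref{W:Kgrading}), so the spectral sequence of Theorem~\ref{W:th:2steps} converges strongly for elementary reasons.

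A smaller point: your stated reasons for the classical $E_1$-vanishing --- the bijectivity of $\ad e:\ga_{-1}\to\ga_1$ and the Lagrangian condition on $\lag$ --- are somewhat tangential in the even-grading setting of Theorem~\ref{W:th:BRST=Wh}, where $\ga_{-1}=0$ and $\lag=0$. The actual geometric input is Lemma~\ref{W:isom} (the $\texttt M'$-equivariant isomorphism $\texttt M'\times(e+\ga_f)\cong e+\m^\perp$, which gives the freeness of the action and the identification of the quotient with the Slodowy slice) together with the codimension count making $\{b_i-\chi(b_i)\}$ a regular sequence; once stated that way your $E_1$ computation is correct and agrees with the one hidden inside Theorem~\ref{W:th:2steps}.
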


\begin{proof}[Sketch of a proof]
Note that in the definition of the BRST complex $(\brst, d)$,
$\brst$ does not depend on $\chi$, but $d$ does. We introduce a
complex $(\brst_\chi, d')$ which is isomorphic to the BRST complex
$(\brst, d)$ but with a shift of the dependence of $\chi$ from the
differential to the complex.

The complex $\brst_\chi :=\wedge (\m^*) \otimes (U(\ga) \otimes
\C_\chi) \otimes \wedge (\hat{\m})$ is obtained from twisting
$\brst$ by $\chi$, and $U(\ga) \otimes \C_\chi$ is naturally an
$\m$-bimodule; the differential $d'$ equals the sum of two
(anti)commuting differentials $d_\m$ and $d^\m$, where $d_\m$ (and
respectively, $d^\m$) is a $\m$-homology (respectively,
$\m$-cohomology) differential. Then $(\brst_\chi, d)$ can be
regarded as a double complex, and the first term of a spectral
sequence computation gives us $H^{\bullet,0} (\brst_\chi, d_\m)
\cong \wedge^\bullet (\m^*) \otimes Q_\chi$ (the $\m$-cohomology
complex with coefficient in $Q_\chi$); moreover, $H^{p,q}
(\brst_\chi, d^\m) =0$ whenever $q \neq 0$. The spectral sequence
stabilizes at the second term, which shows $H^i (\brst_\chi, d)
\cong  \gr H^i(\brst_\chi, d) = H^i (\m, Q_\chi)$, for all $i$.

We shall see from Theorem~\ref{W:th:2steps} later on that $H^0 (\m,
Q_\chi) =Q_\chi^{\ad \m}$ and that $H^i (\m, Q_\chi) =0$ for
$i>0$.

A quasi-isomorphism $\wedge^\bullet (\m^*) \otimes Q_\chi
\rightarrow \brst_\chi$ can then be constructed explicitly, and it
induces the algebra isomorphism on $0$th cohomology. Putting all
things together, we have obtained the theorem.
\end{proof}

We shall describe such an isomorphism in a more concrete fashion.
Note by the PBW theorem that
$
\mc B^0 = U( {\ga}) \oplus \m^* \mc B^0 \hat{\m},
$
with $\m^* \mc{B}^0 \hat{\m}$ being a two-sided ideal. So we can
consider the composition $q$ of the two natural maps $ \mc B^0
\twoheadrightarrow U( {\ga}) \twoheadrightarrow Q_\chi, $ which
fits into the following commutative diagram:

\begin{equation*}
\begin{CD}
0 & @>>> & I_\chi \oplus\m^* \mc B^0 \hat \m & @>>> & \mc{B}^0
&@>q>> & Q_\chi & @>>> 0\\
&&&&  @VVV &&@VV\text{pr}V&&  @|\\
0 & @>>> & I_\chi   & @>>> & U (\ga) &@>>> & Q_\chi & @>>> 0
\end{CD}
\end{equation*}
By \eqref{W:d formula}, we check that $d$ maps $\mc B^{-1}$ into
$I_\chi \oplus \m^* \mc B^0 \hat \m$. So   $q$
induces a well-defined linear map
$
q:H^0(\brst, d) \rightarrow Q_\chi.
$
Then, $q$ is the algebra isomorphism between $H^0(\brst, d)$ and
$Q_\chi^{\m}$ claimed in Theorem~\ref{W:th:BRST=Wh}.

\begin{remark}  \label{W:BRSTgeneral}
The BRST formulation of $W$-algebras works for an arbitrary good
$\Z$-gradings without the assumption of ``even". One needs to add
an additional ``neutral fermion" tensor factor to $\brst$ which
corresponds to the presence of nonzero $\ga_{-1}$ (cf. \cite{KRW,
DHK}). Again here, one has the flexibility of choosing a
Lagrangian/isotropic subspace $\lag$\ of $\ga_{-1}$ in defining
the modified BRST complex.
\end{remark}

\begin{remark}  \label{W:defLosev}
There is yet another equivalent definition of finite $W$-algebras, due to
Losev \cite{Lo1}, via deformation quantization.
\end{remark}

\section{Quantization of the Slodowy slices}
\label{W:sec:Slodowy}

In this section, we will explain the independence of $\lag$ in the
definition of finite $W$-algebras, denoted now by $\mc W_\lag$,
associated to an isotropic subspace $\lag \subseteq \ga_{-1}$,
following Gan and Ginzburg \cite{GG}. To that end, we explain the
geometric picture behind the $W$-algebras in terms of Slodowy
slices, due to Premet \cite{Pr2} (and generalized in \cite{GG}).
Everything works in a general good $\Z$-grading setting, as
remarked in the Introduction of \cite{BK1}.
\subsection{The isotropic subspace definition of $W$-algebras}
\label{W:subsec:GGdef}

Fix an isotropic subspace $\lag$ of $\ga_{-1}$ with respect to
$\langle\cdot,\cdot\rangle$, which means that $\langle \lag, \lag
\rangle =0$, and let
$$
\lag' =\{x \in \ga_{-1} \mid \langle x, \lag \rangle =0 \}.
$$
Clearly we have $\lag \subseteq \lag'$. In this section, we will
introduce and study the generalized Gelfand-Graev modules and
$W$-algebras (using notations $Q_\lag$ and $\mc W_\lag$ to
emphasis the dependence on $\lag$) in such a setting.

Define the following nilpotent subalgebras of $\ga$ (where $\m
\subseteq \m'$):
\begin{displaymath}
\m = \lag \bigoplus (\bigoplus_{i \leq -2} \ga_i) \qquad
\textrm{and}
 \qquad
\n = \lag' \bigoplus (\bigoplus_{i \leq -2} \ga_i).
\end{displaymath}
Note that $\chi$ restricts to a character on $\m$ (actually this
is equivalent to the requirement that $\lag$ is isotropic in
$\ga_{-1}$). Denote by $\C_{\chi}$ the corresponding 1-dimensional
$U(\m)$-module, and let
$$
Q_{\lag}=U(\ga) \bigotimes_{U(\m)}
\C_{\chi} =U(\ga)/I_\lag
$$
be the induced $U(\ga)$-module, where $I_\lag$ denotes the left
ideal of $U(\ga)$ generated by $a-\chi(a),\forall a \in \m$. The
same proof for Exercise~\ref{W:ex:welldefined} shows that
$I_{\lag}$ is $\ad\n$-invariant. Thus, there is an induced $\ad
\n$-action on $Q_{\lag}$.

Let
$$
\mc W_{\lag} := (U(\ga)/I_\lag)^{\ad \n} \equiv \{\bar{y} \in
U(\ga)/I_\lag \mid [a, y] \in I_{\lag}, \forall a \in \n\}.
$$
In the same spirit of  Exercise~\ref{W:ex:welldefined}, letting
$\bar{y}_1 \bar{y}_2 = \overline{y_1y_2},$ for $\bar{y}_1,
\bar{y}_2 \in \mc W_{\lag}$, provides a well-defined algebra
structure on $\mc W_{\lag}$.

Clearly, the above definition of $Q_\lag$ and $\mc W_\lag$ reduces
to the earlier one when $\lag$ is Lagrangian.

\begin{theorem} \cite{GG}  \label{W:th:W lag}
The algebras $\mc W_\lag$ are all isomorphic for different choices
of isotropic (in particular, Lagrangian) subspaces $\lag \subseteq
\ga_{-1}.$
\end{theorem}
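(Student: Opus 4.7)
The plan is to reduce, by transitivity, the comparison of two arbitrary isotropic subspaces to the case of an isotropic subspace contained in a chosen Lagrangian, and then to apply a filtered-to-graded argument based on the quantization of the Slodowy slice. First I would use the elementary fact that in the symplectic space $(\ga_{-1}, \langle\cdot,\cdot\rangle)$ every isotropic subspace extends to a Lagrangian, so that given two isotropic subspaces $\lag_1, \lag_2 \subseteq \ga_{-1}$ I may choose Lagrangians containing each and ultimately compare them through a common Lagrangian. This reduces the theorem to the case $\lag \subseteq \tilde\lag$ with $\tilde\lag$ Lagrangian.

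For such a pair, I would set $\tilde\m := \tilde\lag \oplus \bigoplus_{i \leq -2}\ga_i$, so that $\m \subseteq \tilde\m$ and $I_\lag \subseteq I_{\tilde\lag}$; this yields a surjective $\ga$-module map $\pi \colon Q_\lag \twoheadrightarrow Q_{\tilde\lag}$. Because $\lag \subseteq \tilde\lag$ forces $\tilde\lag^\perp \subseteq \lag^\perp = \lag'$, and because $\tilde\lag$ being Lagrangian means $\tilde\lag^\perp = \tilde\lag$, one has $\tilde\m \subseteq \n$. Thus $\pi$ sends $\mc W_\lag = (Q_\lag)^{\ad \n}$ into $(Q_{\tilde\lag})^{\ad \tilde\m} = \mc W_{\tilde\lag}$, and a short check with $\bar y_1 \bar y_2 = \overline{y_1 y_2}$ shows that this restriction is an algebra homomorphism $\pi \colon \mc W_\lag \to \mc W_{\tilde\lag}$.

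To prove $\pi$ is bijective, I would equip both sides with the Kazhdan filtration (introduced later in this section) and verify that $\pi$ is filtered. The key input—the natural extension of Premet's quantization theorem—identifies the associated graded of each $W$-algebra with $\C[S_e]$, the coordinate ring of the Slodowy slice through $e$, and $\gr \pi$ becomes the identity on $\C[S_e]$. A separated-exhaustive filtration argument then upgrades this into an isomorphism of filtered algebras.

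The hard part will be establishing $\gr \mc W_\lag \cong \C[S_e]$ for arbitrary isotropic $\lag$, not just Lagrangian ones. This rests on two ingredients: a dimension count via the symplectic relation $\dim \lag + \dim \lag' = \dim \ga_{-1}$, combined with Exercise~\ref{W:xca:halfdim}, to ensure the Poincaré series match; and the vanishing of higher Lie algebra cohomology $H^i(\n, Q_\lag) = 0$ for $i > 0$, so that invariants commute with the passage to the associated graded. The cohomology vanishing is the genuinely substantial piece; by the discussion forthcoming in Section~\ref{W:sec:cat=} it can be reduced to a Koszul-style resolution argument of the same flavor as the one underlying Skryabin's equivalence.
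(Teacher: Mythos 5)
Your proof proposal is substantively the same as the paper's: both reduce to the case of nested isotropic subspaces, construct the natural surjection $Q_{\lag_1}\twoheadrightarrow Q_{\lag_2}$, check that it restricts to an algebra homomorphism of $W$-algebras, and then upgrade it to an isomorphism by passing to the Kazhdan-associated graded, where Theorem~\ref{W:th:qunantiz} identifies both sides with $\C[\mc S]$. You have also correctly located the genuinely hard input, namely the Lie algebra cohomology vanishing $H^i(\n,Q_\lag)=0$ for $i>0$, which the paper proves in Theorem~\ref{W:th:2steps} and which indeed also underlies Skryabin's equivalence.

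The one place to tighten is the reduction. The phrase ``compare them through a common Lagrangian'' does not work literally: two distinct Lagrangians $\tilde\lag_1,\tilde\lag_2$ of $\ga_{-1}$ are never contained in a common Lagrangian, so after extending $\lag_1$ and $\lag_2$ to Lagrangians you are still left with the problem of comparing $\mc W_{\tilde\lag_1}$ and $\mc W_{\tilde\lag_2}$. The common reference object should be the zero isotropic subspace, giving $\mc W_0\cong\mc W_{\tilde\lag_i}$ for $i=1,2$ via $0\subseteq\tilde\lag_i$. The paper cuts out the Lagrangian detour entirely: it proves the nested case $\lag_1\subseteq\lag$ for arbitrary isotropic subspaces and then simply specializes $\lag_1=0$, which handles all $\lag$ at once. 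Your version is correct once the base point is taken to be $0$ rather than some ``common Lagrangian,'' but the Lagrangian-extension step buys you nothing and the paper's one-line reduction is cleaner.
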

The proof of this theorem requires some preparations.

\subsection{$\Gamma$-graded $\sgl_2$-triples}
\label{W:subsec:sl2graded}

\begin{lemma} \label{W:lem:EK}
Let $\Ga: \ga =\oplus_{j\in \Z} \ga_j$ be a  good $\Z$-grading for
$0 \neq e \in \ga_2$.  Then there exists $h \in \ga_0$ and $f \in
\ga_{-2}$ such that $\{ e,h,f \}$ form an $\sgl_2$-triple.
\end{lemma}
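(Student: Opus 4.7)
\emph{Plan.} I would proceed by first extending $e$ to some $\sgl_2$-triple via Jacobson--Morozov and then deforming it to align with the given good grading~$\Gamma$. So: start with an $\sgl_2$-triple $\{e, h_0, f_0\}$ supplied by Section~\ref{W:JacobsonM}, and let $h_\Gamma \in [\ga,\ga] \cap \ga_0$ be the grading element from Lemma~\ref{W:grading}. Setting $x := h_0 - h_\Gamma$, the relation $[h_0, e] = 2e = [h_\Gamma, e]$ yields $x \in \ga_e$, and by~\eqref{W:vanish} we may decompose $x = x_0 + x_+$ with $x_0 \in \ga_e \cap \ga_0$ and $x_+ \in \mf u_e^+ := \bigoplus_{j > 0}(\ga_e \cap \ga_j)$. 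Being graded in strictly positive degrees within a bounded $\Z$-grading, $\mf u_e^+$ is nilpotent and exponentiates to a unipotent subgroup $U_e^+ \subseteq G_e$.

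The heart of the argument is to produce $u \in U_e^+$ with $\Ad(u) h_0 \in \ga_0$. I plan to do this inductively on $\Gamma$-degree, in the spirit of the ``rigidity of $\sgl_2$'' reasoning used in the proof of Theorem~\ref{W:th:omega}: write the sought-after $u$ as a product $\exp(y_1)\exp(y_2)\cdots$ with $y_k \in \ga_e \cap \ga_k$, and at the $k$-th stage solve for $y_k$ so as to kill the $\Gamma$-degree-$k$ component of $\Ad(u^{(k-1)})h_0 - h_\Gamma$ (where $u^{(k-1)}$ is the product of the first $k-1$ factors). The key computation is $[y_k, h_\Gamma] = -k\,y_k$, giving the linear equation a leading coefficient~$-k$; the procedure terminates because $\ga_e \cap \ga_k = 0$ for $k$ large enough. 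Since $\Ad(u) e = e$, setting $h := \Ad(u) h_0$ and $f' := \Ad(u) f_0$ yields an $\sgl_2$-triple $\{e, h, f'\}$ with $h \in \ga_0$.

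To finish, I decompose $f' = \sum_{j} f'_j$ according to $\Gamma$ and exploit that $\ad h$ preserves each $\ga_j$ (since $h \in \ga_0$) while $\ad e$ shifts $\Gamma$-degree by~$+2$. From $[h, f'] = -2 f'$, matching $\Gamma$-degrees gives $[h, f'_j] = -2 f'_j$ for every $j$; from $[e, f'] = h \in \ga_0$ I obtain $[e, f'_j] = 0$ for $j \neq -2$ together with $[e, f'_{-2}] = h$. Hence $\{e, h, f\}$ with $f := f'_{-2} \in \ga_{-2}$ is the desired $\sgl_2$-triple.

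The main obstacle is the inductive step in the middle paragraph: beyond the leading term $-k\,y_k$, the equation at degree $k$ also picks up a contribution of the form $\ad x_0(y_k)$, so the relevant operator on $\ga_e \cap \ga_k$ is $-kI + \ad x_0$, whose invertibility is not immediate. The cleanest route around this is to establish that the full nonlinear map $\mf u_e^+ \to \mf u_e^+$, $y \mapsto \Ad(\exp y) h_0 - h_0$, is surjective---directly analogous to the identity $\Ad(U_e)(h) = h + \mf u_e$ used in the proof of Theorem~\ref{W:th:omega}. Once surjectivity is in hand one simply chooses $u$ so that $\Ad(u) h_0 - h_0 = -x_+$, which places $\Ad(u) h_0 = h_\Gamma + x_0 \in \ga_0$ and completes the construction.
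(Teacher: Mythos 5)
Your overall goal (produce a triple with $h \in \ga_0$, then extract $f$ by projecting to $\Gamma$-degree $-2$) is right, and your final paragraph on extracting $f := f'_{-2}$ from $\{e,h,f'\}$ with $h \in \ga_0$ is a clean, correct computation. But the route you take in the middle is substantially more complicated than what is needed, and the place where you acknowledge an obstacle is in fact where your argument has a genuine gap.

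The paper's own argument avoids conjugation entirely: starting from a Jacobson--Morozov triple $\{e,h',f'\}$, it simply takes $h := h_0'$, the $\Gamma$-degree-$0$ component of $h'$. Comparing $\Gamma$-degrees in $[h',e] = 2e$ (with $e\in\ga_2$) gives $[h_0',e] = 2e$, and comparing degrees in $h' = [e,f']$ gives $h_0' = [e, f'_{-2}] \in [e,\ga]$. By Morozov's lemma (projecting $f'_{-2}$ to its $\ad h_0'$-degree $-2$ component), this completes to a triple $\{e,h_0',\tilde f\}$, and one then projects $\tilde f$ to its $\Gamma$-degree $-2$ component as you do. Note $h_0' = h_\Gamma + x_0$ in your notation, so you and the paper land on the same $h$; the difference is that the paper manufactures the companion $f$ directly rather than transporting $f_0$ by a conjugation.

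Your route requires the surjectivity of $y \mapsto \Ad(\exp y)h_0 - h_0$ from $\mf u_e^+ := \bigoplus_{j>0}(\ga_e\cap\ga_j)$ to itself, and your proposed fix --- ``directly analogous to $\Ad(U_e)(h) = h+\mf u_e$'' --- does not actually apply. In the proof of Theorem~\ref{W:th:omega}, the grading on $\mf u_e := \ga_e\cap[\ga,e]$ that makes the induction go through is the $\ad h$-grading (i.e.\ the Dynkin grading of the triple being conjugated), under which $[h,y_k] = ky_k$ exactly, with no correction term. Your $\mf u_e^+$ carries the $\Gamma$-grading, and $[h_0,y_k] = ky_k + [x_0,y_k]$ precisely because $h_0 = h_\Gamma + x_0 + x_+$ is not $\Gamma$-degree $0$; so the correction $\ad x_0$ reappears in the ``analogous'' induction as well. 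The abstract orbit-is-closed argument also would need the stabilizer $\mf u_e^+ \cap \ga^{\mf s}$ to vanish, which is not obvious since $\ga^{\mf s}$ is graded by $\ad h_0$ but not by $\Gamma$. A correct way to salvage your conjugation step: show $x_+ \in \mf u_e = \ga_e \cap [\ga,e]$ (this does hold, since $\ga_j \subseteq [\ga,e]$ for $j\geq 1$ by \eqref{W:surj}), and then invoke the already-established $\Ad(U_e)(h_0) = h_0 + \mf u_e$ verbatim. But at that point you are doing strictly more work than the one-line degree comparison the paper uses, and gaining nothing toward the statement.
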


This will be referred to as  a {\em $\Gamma$-graded
$\sgl_2$-triple} associated to $e$. This $\Gamma$-graded
$\sgl_2$-triple will be used often.

\begin{proof}
By Jacobson--Morozov Theorem in \ref{W:JacobsonM}, there exists an
$\sgl_2$-triple $\{ e,h',f'\}$ in $\ga$. Denote  by $h'=\sum_{j
\in \Z} h_j$, $f'=\sum_{j \in \Z}f'_j$ the decompositions with
respect to the $\Z$-grading $\Gamma$, and take $h=h_0 \in \ga_0$.
Then $[h,e]=2e$ and $ h =[e,f'_{-2}] \in [e,\ga]$.
Taking
$\tilde{f}$ to be the degree $-2$ component of $f'_{-2}$ with respect to
$\ad h$, we obtain a new $\sgl_2$-triple $\{ e,h,\tilde {f}\}$.
Finally, taking $f$ to be the degree $-2$ component of $\tilde{f}$
with respect to $\Gamma$, we obtain the desired $\sgl_2$-triple
$\{ e,h,  {f}\}$. (Indeed, $\tilde{f} =f$ by Lemma~\ref{W:rigid};
in other word, $\tilde f$ is already $\Ga$-homogeneous.)
\end{proof}

%
%

Given a subspace $V$ of $\ga$, we let
$$
V^\perp =\{x\in \ga \mid
(x| v) =0, \forall v\in V\},
\quad
V^{*,\perp} =\{\xi \in
\ga^* \mid \xi (v) =0, \forall v\in V\}.
$$

\begin{lemma}\label{W:tangent}
Let  $\{ e,h,f \}$ be a $\Gamma$-graded $\sgl_2$-triple. Then, we
have $\m^\perp = [\n,e] \oplus \ga_f$.
\end{lemma}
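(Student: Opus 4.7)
My plan is to build the decomposition $\m^\perp = [\n, e] \oplus \ga_f$ from two ingredients: the standard $\sgl_2$-decomposition $\ga = \ga_f \oplus [e, \ga]$ (coming from $\sgl_2$-representation theory applied to $\{e,h,f\}$, since in every $\sgl_2$-irreducible $V(n) \subseteq \ga$ one has $V(n) = V(n)^f \oplus e \cdot V(n)$), and the structural properties of the good grading $\Gamma$ recorded in Proposition~\ref{W:goodprop}. I will verify the two inclusions $[\n, e] \subseteq \m^\perp$ and $\ga_f \subseteq \m^\perp$ separately, then check the sum is direct and match dimensions.

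For $[\n, e] \subseteq \m^\perp$: invariance of the form gives $([a, e]\,|\,b) = (e\,|\,[a, b]) = \chi([a, b])$ for $a \in \n$ and $b \in \m$, and since $e \in \ga_2$ the character $\chi$ is supported on $\ga_{-2}$. Decomposing $[\m, \n]$ by $\Gamma$-degree, its $\ga_{-2}$-component lies in $[\lag, \lag']$, and the isotropy condition $\langle \lag, \lag' \rangle = 0$ is precisely $\chi([\lag, \lag']) = 0$. For $\ga_f \subseteq \m^\perp$, I first claim $\ga_f \subseteq \bigoplus_{j \le 0} \ga_j$: since $f \in \ga_{-2}$ is $\Gamma$-homogeneous, $\ga_f$ is $\Gamma$-graded, while \eqref{W:surj} gives $\bigoplus_{j \ge 1} \ga_j \subseteq [e, \ga]$, so the $\sgl_2$-decomposition forces $\ga_f \cap \bigoplus_{j \ge 1} \ga_j \subseteq \ga_f \cap [e,\ga] = 0$. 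The pairing property \eqref{W:pairing} then yields $\bigoplus_{j \le 0} \ga_j = \bigl(\bigoplus_{j \le -1} \ga_j\bigr)^\perp \subseteq \m^\perp$, since $\m \subseteq \bigoplus_{j \le -1} \ga_j$.

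Directness is immediate: $[\n, e] \subseteq [e, \ga]$ while $\ga_f \cap [e, \ga] = 0$. For the dimension match, the injectivity \eqref{W:inj} applied on $\n \subseteq \bigoplus_{j \le -1} \ga_j$ gives $\dim [\n, e] = \dim \n$, and $\dim \ga_f = \dim \ga_e = \dim \ga_0 + \dim \ga_1$ by \eqref{W:numbersimple}; combined with $\dim \lag + \dim \lag' = \dim \ga_{-1}$ (non-degeneracy of $\langle\cdot,\cdot\rangle$) and the symmetry $\dim \ga_j = \dim \ga_{-j}$, a short computation yields $\dim [\n, e] + \dim \ga_f = \dim \ga - \dim \m = \dim \m^\perp$. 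The step I expect to be the main obstacle is establishing $\ga_f \subseteq \bigoplus_{j \le 0} \ga_j$, which is the counterpart of \eqref{W:vanish} with the roles of $e$ and $f$ swapped; routing through the $\sgl_2$-decomposition $\ga = \ga_f \oplus [e, \ga]$ avoids a separate verification that $-\Gamma$ is a good $\Z$-grading for $f$.
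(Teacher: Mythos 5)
Your proof is correct and follows the same four-step structure as the paper's: show $[\n,e]\subseteq\m^\perp$ by a direct computation with the invariant form, show $\ga_f\subseteq\m^\perp$, deduce directness from the $\sgl_2$-decomposition $\ga = \ga_f\oplus[e,\ga]$, and finish with a dimension count using \eqref{W:inj} and \eqref{W:numbersimple}. The one place where you deliberately diverge is the justification of $\ga_f\subseteq\bigoplus_{j\le 0}\ga_j$: the paper simply invokes ``the $f$-counterpart to \eqref{W:vanish}'' (and later to \eqref{W:numbersimple}), which tacitly assumes the reversed grading $-\Gamma$ is a good grading for $f$ — a true but unproved assertion at that point in the text. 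You instead observe that $\ga_f$ is $\Gamma$-graded (because $f\in\ga_{-2}$ is homogeneous), that $\bigoplus_{j\ge 1}\ga_j\subseteq[e,\ga]$ by \eqref{W:surj}, and that $\ga_f\cap[e,\ga]=0$; this derivation is both more elementary and more self-contained, since it draws only on properties already established for $e$ and on basic $\sgl_2$-representation theory. Similarly, in the dimension count you replace the paper's use of the $f$-counterpart to \eqref{W:numbersimple} with the identity $\dim\ga_f=\dim\ga_e$ plus \eqref{W:numbersimple} itself and $\dim\ga_1=\dim\ga_{-1}$, which again avoids asserting facts about $f$ that the paper hasn't formally set up. The trade-off is that your argument is slightly longer in detail; the benefit is that it closes a small logical gap that a careful reader would notice in the original.
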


\begin{proof}
We have properties \eqref{W:deg=2}-\eqref{W:numbersimple} associated
to $e$ and their $f$-counterparts available. Now the lemma follows
from the four facts below:
\begin{itemize}
\item [(i)] $\m^\perp \supseteq \ga_f$. This follows from
$\m^\perp \supseteq \ga_{\leq 0}$ by \eqref{W:pairing} and  the
$f$-counterpart to \eqref{W:vanish} which says that $\ga_f
\subseteq \ga_{\leq 0}$. \item [(ii)] $\m^\perp \supseteq [\n,e]$.
This can be seen by a direct computation: $(m |[m',e]) =([m,m']
|e) =0$ for $m \in \m$ and $m' \in \m'$. \item [(iii)] $[\n,
e]\cap \ga_f =0$. This follows by the $\sgl_{2}$ representation
theory. \item [(iv)] $\dim\a^{\perp}= \dim\n +\dim \ga_0
+\dim\ga_{-1} =\dim [\n, e]+\dim \ga_f $. This follows by the
bijection $\,\n\to [\n, e]\,,\, x\mapsto [x,e],$ by \eqref{W:inj},
and  the $f$-counterpart to \eqref{W:numbersimple}.
\end{itemize}
\end{proof}

\subsection{Some $\C^*$-actions}
\label{W:C*}

The embedding of the $\Gamma$-graded $\sgl_2$-triple in $\ga$
exponentiates to a rational homomorphism $\tilde{\gamma}:
\text{SL}_{2}(\C) \rightarrow G$. We put
\begin{displaymath}
\gamma : \C^{*} \rightarrow G,  \quad  t \mapsto \tilde{\gamma}
(\text{diag}\, (t,t^{-1})).
\end{displaymath}
Note that $\bigl(\Ad \gamma(t)\bigr)(e) = t^{2} e$. The desired
action of $\C^{*}$ on $\ga$, to be denoted by $\rho$, is defined
by
$$
\rho(t)(x)=t^{2}\cdot \bigl(\Ad \gamma(t^{-1})\bigr)(x), \quad
\forall t \in \C^{*}, x \in \ga.
$$
Note that $\rho(t)(e+x) = e + \rho(t)(x)$. Thus, since $\rho(t)$
stabilizes $\ga_f$ and $\m^{\perp}$, it also stabilizes $e+ \ga_f$
and $e +\m^{\perp}$, respectively. Note that $\underset{^{t\to 0}}{\lim}^{\,}
\rho(t)(x) = e, \forall x \in e+ \m^{\perp}$, i.e. the
$\C^{*}$-action on $e+ \m^{\perp}$ is contracting.

Let $\texttt M'$ denote the closed subgroup of $G$ whose Lie
algebra is $\m'$. Define a $\C^{*}$-action on $\texttt M' \times
(e+ \ga_f)$ by
\begin{displaymath}
t \cdot (g, x) = (\gamma(t^{-1})g\gamma(t), \rho(t)(x)).
\end{displaymath}
Note that for any $(g, x) \in \texttt M' \times (e+ \ga_f)$, we
have $\underset{^{t\to 0}}{\lim}^{\,} t \cdot (g,x) = (1,e)$.

\subsection{}
\label{W:slice}

Denote by $\kappa: \ga \rightarrow \ga^*$ the isomorphism induced
by the non-degenerate bilinear form $(\cdot|\cdot)$. Following the
terminology of Gan and Ginzburg, we will call
$$
 \mc S := \chi +\ker \ad^* f \equiv \kappa(e+ \ga_f)
$$
the {\em Slodowy slice} (through $\chi$), or $e+ \ga_f$ the {\em
Slodowy slice} (through $e$). Note that $e+ \ga_f$ is a
transversal slice to the adjoint orbit through $e$.

\begin{lemma} \label{W:isom}
The adjoint action map
\begin{displaymath}
\alpha : \texttt M' \times (e+ \ga_f) \longrightarrow e+
\a^{\perp}
\end{displaymath}
is an isomorphism of affine varieties.
\end{lemma}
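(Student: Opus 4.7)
The plan is to exploit the $\C^*$-action constructed in \ref{W:C*} to reduce the global isomorphism statement to a purely infinitesimal statement at the single $\C^*$-fixed point $(1, e)$.

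First, I would verify that $\alpha$ actually maps into $e + \a^\perp$. Since $\lag, \lag' \subseteq \ga_{-1}$ we have $[\m', \m] \subseteq \ga_{\leq -2} \subseteq \m$, so $\m$ is an ideal of $\m'$ and $\Ad(g)(\m) \subseteq \m$ for all $g \in \texttt M'$. Combined with the invariance of $(\cdot|\cdot)$ and the identity $(e|m)=\chi(m)$, this gives $(\Ad(g)(e+x) - e \mid m) = 0$ for all $g \in \texttt M'$, $x \in \ga_f$, $m \in \m$, i.e.\ $\alpha(g, e+x) \in e + \a^\perp$.

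Next I would check that $\alpha$ is $\C^*$-equivariant with respect to the actions introduced in \ref{W:C*}. A direct computation using $\Ad \gamma(t)(e) = t^2 e$ and $\rho(t)(y) = t^2 \Ad \gamma(t^{-1})(y)$ shows that
$$\alpha\bigl(\gamma(t^{-1}) g \gamma(t),\, \rho(t)(e+x)\bigr) = \rho(t)\bigl(\alpha(g, e+x)\bigr).$$
Moreover, both $\C^*$-actions have only strictly positive weights: on $\m' \subseteq \bigoplus_{i \leq -1} \ga_i$ the conjugation action by $\gamma(t^{-1})\cdot\gamma(t)$ scales with weight $-i \geq 1$, and on $\ga_f \subseteq \bigoplus_{j \leq 0} \ga_j$ (by the $f$-counterpart of \eqref{W:vanish}) the action $\rho$ carries weight $2-j \geq 2$. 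Thus both sides become affine spaces with contracting $\C^*$-actions whose unique fixed points are $(1, e)$ and $e$, and $\alpha$ is equivariant sending one to the other.

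The key infinitesimal input is then the computation of the differential at $(1, e)$. Identifying $T_{(1, e)}\bigl(\texttt M' \times (e+\ga_f)\bigr)$ with $\m' \oplus \ga_f$ and $T_e(e+\a^\perp)$ with $\a^\perp$, we have $d\alpha_{(1, e)}(X, Y) = [X, e] + Y$. By Lemma~\ref{W:tangent} this is precisely the direct sum decomposition $\a^\perp = [\n, e] \oplus \ga_f$ (recall $\n = \m'$), so $d\alpha_{(1, e)}$ is a linear isomorphism.

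Finally, I would conclude via the standard contracting-$\C^*$ principle: a $\C^*$-equivariant morphism between affine varieties on which $\C^*$ acts with strictly positive weights and unique fixed points, which induces an isomorphism on tangent spaces at those fixed points, must itself be an isomorphism. Concretely, $\alpha^*$ is a graded homomorphism of nonnegatively graded $\C$-algebras with $A_0 = \C = B_0$; bijectivity of $d\alpha_{(1, e)}$ translates to bijectivity on the cotangent / ``degree-one generators'' piece, and a graded Nakayama argument then forces $\alpha^*$ to be surjective. Equidimensionality $\dim \m' + \dim \ga_f = \dim \a^\perp$ together with integrality of both coordinate rings upgrades this to an isomorphism. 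The main obstacle is packaging the last step cleanly; the cleanest route is the graded-Nakayama + dimension count sketched above, but one could alternatively argue that $\alpha$ is étale at $(1, e)$, propagate étaleness across all $\C^*$-orbits by equivariance, and verify injectivity by noting that any fiber of $\alpha$ is $\C^*$-stable and its limit as $t \to 0$ must lie in $\alpha^{-1}(e) = \{(1, e)\}$.
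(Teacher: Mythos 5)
Your proposal is correct and follows essentially the same strategy as the paper's proof: verify $\C^*$-equivariance, use Lemma~\ref{W:tangent} to show $d\alpha_{(1,e)}$ is an isomorphism, and invoke the contracting-$\C^*$-action principle. You go a bit further than the paper in spelling out the well-definedness of $\alpha$ (using that $\m$ is an ideal in $\n$ and $\chi$ vanishes on $[\n,\m]$), the strict positivity of the weights, and two possible ways to prove the ``general nonsense'' step (graded Nakayama or \'etale propagation), all of which the paper leaves to the reader.
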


\begin{proof}[Sketch of a proof]
The action map $\alpha$ is  $\C^{*}$-equivariant.

By Lemma~\ref{W:tangent}, the differential map of $\alpha$ is an
isomorphism between the tangent spaces at the $\C^*$-fixed points
$(1,e)$ and $e$.

Now, Lemma \ref{W:isom} follows from the following general
nonsense: {an equivariant morphism $\alpha:X_{1} \rightarrow
X_{2}$ of smooth affine $\C^{*}$-varieties  with contracting
$\C^{*}$-actions which induces an isomorphism between the tangent
spaces of the $\C^{*}$-fixed points must be an isomorphism.}
\end{proof}

\subsection{}
\label{W:subsec:diag}

The remainder of this section is to make sense the following
commutative diagram:
\begin{equation}\label{W:QSlodowy}
\begin{CD}
\gr U(\mathfrak{g}) & @= &\C[\mathfrak{g}^*] &@>\sim>> & \C[\mathfrak g]\\
@VVV &&@VVV&& @VVV\\
\gr Q_\lag &@= &\C[\chi+\mathfrak{m}^{*,\perp}] &@>\sim>> &
\C[e+\mathfrak{m}^\perp]
\\
@AAA&&@VVV&&@VVV\\
\gr \mc W_\lag &@>\sim>\nu>&\C[\mc S]&@>\sim>>&\C[e+\ga_f]
\end{CD}
\end{equation}

\begin{remark}  \label{W:rem:reminder}
When reading \eqref{W:QSlodowy}, it is instructive to keep in mind
the isomorphism $\C[e+\ga_f] \cong
\C[e+\mathfrak{m}^\perp]^{\text{Ad}\,\texttt M'}$ by
Lemma~\ref{W:isom} and the identity $\mc W_\lag =Q_\lag^{\ad \m'}$ by
definition. Another way to rephrase this isomorphism
is that $\mc S$ is obtained
by the symplectic reduction for the coadjoint action of $\texttt M'$
on $\ga^*$ at the point $\chi$.
\end{remark}

Some parts of the diagram \eqref{W:QSlodowy} are easy to explain.
The vertical arrows in the third column are restriction maps
(e.g., $\ga_f \subseteq \m^\perp$ by Lemma~\ref{W:tangent}). The
second column (which is more conceptual from the coadjoint orbit
philosophy) is transferred from the (simpler) third one by the
isomorphism $\kappa: \ga \rightarrow \ga^*$.

It remains to explain the first column (where $\gr A$ stands for
the associated graded space/algebra for a Kazhdan-filtered
space/algebra $A$) and its identification with the second one. See
Section~\ref{W:Kgrading} below.

Granting the diagram \eqref{W:QSlodowy}, we can complete the proof
of Theorem~\ref{W:th:W lag}.
\begin{proof}[Proof of Theorem~\ref{W:th:W lag}]
Given two isotropic subspaces $\lag_1, \lag$ of $\ga_{-1}$ such
that $\lag_1 \subseteq \lag$, we have a natural
$U(\ga)$-homomorphism $Q_{\lag_1} \rightarrow Q_{\lag}$, which
gives rise to an algebra homomorphism $\mc W_{\lag_1} \rightarrow
\mc W_{\lag}$. The associated graded map $\gr \mc W_{\lag_1}
\rightarrow \gr \mc W_{\lag}$ is an algebra isomorphism, by
Theorem~\ref{W:th:qunantiz}. Hence the canonical map $\mc
W_{\lag_1} \rightarrow \mc W_{\lag}$ is indeed an isomorphism.

Taking $\lag_1 =0$ implies that the isoclass of the algebra $\mc W_{\lag}$ is
independent of the choice of an isotropic subspace $\lag$. This
completes the proof of the theorem.
\end{proof}
\subsection{Kazhdan grading and filtration}
\label{W:Kgrading}

Let $\{U_j(\ga)\}_{j\leq 0}$ be the standard PBW filtration on
$U(\ga)$. Recall that $h_\Gamma$ is introduced in
Lemma~\ref{W:grading}. The action of $\ad h_\Gamma$ induces a
grading on each $U_j(\ga)$ by
$$ U_j(\ga)_i = \{x \in U_j (\ga) \mid \ad h_\Gamma (x) =ix \}.$$
The Kazhdan filtration on $U(\ga)$ is defined by letting
$$
F_n U(\ga) =\sum_{i+2k \leq n} U_k(\ga)_i,
$$
and it enjoys the following favorable properties:

\begin{enumerate}
\item[(a)] The canonical map $\gr U(\ga) \rightarrow S[\ga]
=\C[\ga^*]$ is an isomorphism of graded commutative algebras.
Indeed, for $x \in \ga_i, y \in \ga_j$, we have $x \in F_{i+2}
U(\ga), y \in F_{j+2} U(\ga)$, and $[x,y] \in F_{i+j+2} U(\ga)$.

The Kazhdan grading on $\ga$ and so on $S(\ga)$ (which is
compatible with the one on $\gr U(\ga)$) can be described
directly.

\item[(b)] There is a Kazhdan filtration $\{F_n Q_\lag\}$ on
$Q_\lag =U(\ga)/I_\lag$ induced from $U(\ga)$.

\begin{itemize}
\item $F_n Q_\lag =0$ unless $n \geq 0$. Indeed, the generators
$\{a -\chi(a) \mid \forall a \in \m\}$ of the ideal $I_\lag$
contains all the negative-degree generators of $U(\ga)$.

\item $\gr Q_\lag =\gr U(\ga) /\gr I_\lag$ is a commutative
$\N$-graded algebra.

\item The ideal $\gr I_\lag$ in $\gr U(\ga) = \C[\ga^*]$ can be
identified with the ideal of polynomial functions on $\ga^*$ which
vanish on $\chi +\m^{*,\perp}$.

\item The canonical map $\gr Q_\lag \rightarrow
\C[\chi+\mathfrak{m}^{*,\perp}]$ is an algebra isomorphism.
\end{itemize}

\item[(c)] There is an induced Kazhdan filtration on the subspace
$\mc W_\lag$ of $Q_\lag$ such that $F_n \mc W_\lag =0$ unless $n
\geq 0$.
\end{enumerate}

\begin{remark}  \label{W:rem:poisson}
One could define the graded algebra structures on the second and
third columns directly by some canonical $\C^*$-action on $\ga$
which preserves $e +\m^\perp$ and $e+\ga_f$ (similar to
Section~\ref{W:C*}), and then show that the horizontal maps
between the last two columns in \eqref{W:QSlodowy} are
isomorphisms of graded algebras.

We have chosen not to discuss the  canonical Poisson algebra
structures for various graded algebras above (see \cite{GG} for
details).
\end{remark}

\subsection{}
\label{W:graded=}

To complete the commutative diagram \eqref{W:QSlodowy}, we define
the map $\nu: \gr \mc W_\lag \rightarrow \C[\mc S]$ to be the
composite of the three natural maps
$$
\gr \mc W_{\lag} \longrightarrow \gr Q_\lag \longrightarrow
\C[\chi +\m^{*,\perp}] \longrightarrow \C[\mc S].
$$
The last piece for the completion of the diagram
\eqref{W:QSlodowy} is the following result (due to Premet
\cite{Pr2} for Lagrangian $\lag$ and Gan-Ginzburg \cite{GG} for
isotropic $\lag$), whose proof will be postponed to the next
section.

\begin{theorem}  \label{W:th:qunantiz}
The map $\nu: \gr \mc W_\lag \rightarrow \C[\mc S]$ is an
isomorphism of [graded] algebras.
\end{theorem}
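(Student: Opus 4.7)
The plan has two steps. Step one: show $\nu$ is well-defined and injective into $\C[\mc S]$. Step two: show surjectivity via a cohomology vanishing. Throughout, the main geometric input is Lemma~\ref{W:isom}, which provides a ring isomorphism $\C[e+\m^\perp] \cong \C[\texttt M'] \otimes \C[e+\ga_f]$ of $\texttt M'$-modules (with $\texttt M'$ acting on itself by left translation), whence $\C[e+\m^\perp]^{\Ad \texttt M'} = \C[e+\ga_f] = \C[\mc S]$.

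For step one, take a representative $y \in F_n U(\ga)$ of an element of $F_n \mc W_\lag$ and let $\bar y \in \gr_n Q_\lag \cong \C[\chi+\m^{*,\perp}]_n$ be its symbol. The Kazhdan filtration satisfies $[F_i, F_j] \subseteq F_{i+j-2}$, so $\gr U(\ga) = \C[\ga^*]$ inherits a Poisson bracket. The defining relation $[a, y] \in I_\lag$ for $a \in \m'$ becomes $\{a, \bar y\} = 0$ in $\C[\chi + \m^{*,\perp}]$ upon passing to symbols. Since $\texttt M'$ is connected unipotent (its Lie algebra $\m'$ lies in $\oplus_{i\leq-1}\ga_i$, hence is nilpotent), $\bar y$ is $\Ad \texttt M'$-invariant. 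Transferred via $\kappa$, it lands in $\C[e+\m^\perp]^{\Ad \texttt M'} = \C[\mc S]$, so $\nu$ is well-defined into $\C[\mc S]$. Injectivity is then immediate: $\gr \mc W_\lag$ embeds in $\gr Q_\lag$ (the filtration on $\mc W_\lag$ is induced from $Q_\lag$), and the restriction $\C[e+\m^\perp]^{\Ad \texttt M'} \to \C[e+\ga_f]$ is an isomorphism.

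For step two, I invoke $H^i(\m', Q_\lag) = 0$ for $i > 0$ (with $H^0 = \mc W_\lag$) promised in Theorem~\ref{W:th:2steps}. The Kazhdan filtration extends to the Chevalley-Eilenberg complex $C^\bullet(\m', Q_\lag)$, giving a convergent spectral sequence $E_1^{\bullet,\bullet} = H^\bullet(\m', \gr Q_\lag) \Rightarrow \gr H^\bullet(\m', Q_\lag)$. By the decomposition above, $\gr Q_\lag \cong \C[\texttt M'] \otimes \C[\mc S]$ is a coinduced (free) $\m'$-module, so $H^{>0}(\m', \gr Q_\lag) = 0$ and $H^0(\m', \gr Q_\lag) = \C[\mc S]$. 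The spectral sequence collapses at $E_1$ for degree reasons, producing $\gr \mc W_\lag = \gr H^0(\m', Q_\lag) \cong \C[\mc S]$, and this identification is precisely $\nu$. The main obstacle is setting up the $\m'$-module structure on $\gr Q_\lag$ from Lemma~\ref{W:isom} precisely enough to conclude coinducedness (one must verify that the $\texttt M'$-action on $e+\m^\perp$ pulled back via $\alpha$ is left translation on the $\texttt M'$-factor); once that compatibility is in hand, both the graded cohomology vanishing and the spectral sequence collapse follow formally, and the cohomology vanishing input $H^{>0}(\m', Q_\lag) = 0$ is then exactly what the argument needs.
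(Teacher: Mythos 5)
Your proposal is in substance the paper's own proof (which is stated and proved as Theorem~\ref{W:th:2steps}): the Kazhdan-filtered Chevalley--Eilenberg complex for $\n=\m'$ acting on $Q_\lag$ yields a spectral sequence whose $E_1$ page, computed via the geometric decomposition $\gr Q_\lag \cong \C[\texttt M']\otimes\C[\mc S]$ from Lemma~\ref{W:isom}, is concentrated in a single cohomological degree, forcing collapse and hence the isomorphism $\gr\mc W_\lag\cong\C[\mc S]$ together with the higher vanishing. Two expository cautions are in order. The opening of your Step~2, ``I invoke $H^{>0}(\m',Q_\lag)=0$ \dots\ promised in Theorem~\ref{W:th:2steps}'', is circular as phrased, since that vanishing \emph{is} part of the theorem being proved; your spectral sequence in fact re-derives it from the associated graded computation, so it should be stated as a conclusion of the collapse, not as an input (the closing sentence of Step~2 has the same confusion). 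Likewise, ``coinduced (free)'' misdescribes $\C[\texttt M']$ as an $\m'$-module: freeness is the wrong adjoint for cohomology vanishing, and the paper's cleaner observation is that the Chevalley--Eilenberg complex of $\n$ with coefficients in $\C[\texttt M']$ is the algebraic de Rham complex of the affine space $\texttt M'$, whence $H^i(\n,\C[\texttt M'])=\delta_{i,0}\C$. Your Step~1 (well-definedness and injectivity via Poisson brackets and $\Ad\texttt M'$-invariance) is a valid unpacking of the paper's map $\nu_1$, but it becomes redundant once Step~2's collapse is in place: the spectral sequence gives $\nu$ as an isomorphism outright, so the separate injectivity argument can be dropped.
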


\subsection{} \label{W:subsec:Kostant}

We have the following fundamental theorem of Kostant \cite{Ko}.
Here we follow the approach of Premet \cite{Pr2}.

\begin{theorem} \label{W:Kostant}
Let $e$ be a regular nilpotent element in $\ga$. Then $\mc W_\lag
\cong \mc Z (\ga)$, the center of $U(\ga)$.
\end{theorem}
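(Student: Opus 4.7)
The plan is to exhibit a natural algebra homomorphism $\phi: \mc Z(\ga) \to \mc W_\lag$ and then upgrade it to an isomorphism via the quantization Theorem~\ref{W:th:qunantiz} combined with Kostant's classical theorem about restriction of invariants to the Slodowy slice. The construction of $\phi$ is immediate: if $z \in \mc Z(\ga)$ then $z$ commutes with every element of $\ga$, in particular with $\n$, so $[a,z]=0 \in I_\lag$ for all $a \in \n$, and therefore the coset $\bar z \in Q_\lag$ belongs to $\mc W_\lag = (Q_\lag)^{\ad \n}$. The map $z \mapsto \bar z$ is linear and multiplicative because the algebra structure on $\mc W_\lag$ is inherited from $U(\ga)$.

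Next, I would filter both sides by the Kazhdan filtration restricted from $U(\ga)$; clearly $\phi$ preserves this filtration, so it induces a graded homomorphism $\gr \phi: \gr \mc Z(\ga) \to \gr \mc W_\lag$. On the source, the standard fact that the symbol map identifies $\gr \mc Z(\ga)$ with $S(\ga)^G = \C[\ga^*]^G$ remains true for the Kazhdan filtration, since any PBW-filtered central element is $\ad h_\Gamma$-invariant and its principal symbol is $G$-invariant. On the target, Theorem~\ref{W:th:qunantiz} identifies $\gr \mc W_\lag$ with $\C[\mc S] \cong \C[e + \ga_f]$. Chasing through diagram~\eqref{W:QSlodowy}, the map $\gr \phi$ is identified (after transporting via $\kappa$) with the classical restriction map
\[
\text{res}: \C[\ga]^G \longrightarrow \C[e+\ga_f].
\]

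The crucial input is now the classical theorem of Kostant: since $e$ is regular nilpotent, $e + \ga_f$ is a transversal slice that meets every regular adjoint orbit exactly once, and consequently $\text{res}$ is an isomorphism of polynomial algebras on $r = \text{rank}(\ga)$ generators. This gives $\gr \phi$ an isomorphism. A standard filtered-to-graded argument (both Kazhdan filtrations are non-negative and exhaustive, and an isomorphism on associated graded lifts to an isomorphism of filtered algebras) then promotes $\gr \phi$ being an isomorphism to $\phi$ itself being an isomorphism.

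The main obstacle is the bookkeeping needed to confirm that the associated graded of $\phi$ really does coincide with the classical restriction map under the chain of identifications furnished by diagram~\eqref{W:QSlodowy}. This reduces to checking that the symbol of a central $z \in \mc Z(\ga)$ (under the Kazhdan filtration) is a $G$-invariant polynomial on $\ga^*$ whose image in $\gr \mc W_\lag \cong \C[e+\ga_f]$ is computed by first restricting to $\chi + \m^{*,\perp}$ and then further restricting along the $\texttt M'$-orbit isomorphism of Lemma~\ref{W:isom}; for a $G$-invariant function, both successive restrictions amount to direct evaluation on $e + \ga_f$, which matches the classical restriction. Everything else is routine once this compatibility is established.
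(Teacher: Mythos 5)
Your proposal is correct and follows essentially the same route as the paper: construct the natural map $\mc Z(\ga) \to \mc W_\lag$, pass to associated graded with respect to the Kazhdan filtration, identify $\gr \mc W_\lag$ with $\C[e+\ga_f]$ via Theorem~\ref{W:th:qunantiz}, invoke Kostant's classical result that restriction of $G$-invariants to the Slodowy slice through a regular nilpotent is an isomorphism, and conclude by a filtered-algebra argument. The only cosmetic difference is that the paper lifts the polynomial generators $f_1,\dots,f_r$ of $S(\ga^*)^G$ explicitly (and separately notes injectivity of the central action on $Q_\lag$), whereas you argue more abstractly that $\gr\phi$ coincides with the classical restriction map; both amount to the same content.
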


\begin{proof}[Sketch of a proof]
The algebra of invariants $S(\ga^*)^G$ is known to be a polynomial
algebra  in $r =\text{rank} (\ga)$ variables. Let $f_1, \ldots,
f_r$ denote the algebraically independent homogenous generators of
$S(\ga^*)^G$, with $\deg f_i =m_i +1$, where $m_i$ denotes the
exponents of $\ga$. Since the associated graded algebra of $\mc Z
(\ga)$ with respect to the PBW filtration $\{U_j(\ga)\}_{j \ge 0}$
is isomorphic to $S(\ga^*)^G$ (where we have identified $\ga \cong
\ga^*$), we can choose lifts $\tilde f_i \in U_{m_i+1}(\ga)\cap
\mc Z (\ga)$ of each $f_i$.

Restricting the adjoint quotient map $\ga \rightarrow \ga//G
=\C^r$, $x \mapsto (f_1(x), \ldots, f_r(x))$ to the Slodowy slice
gives rise to a $\C^*$-equivariant isomorphism of affine varieties
from $e+\ga_f$ to $\C^r$ \cite{Ko, Slo}, where the $\C^*$-action
$\tilde{\rho}$ on $e+\ga_f$ is such that $\tilde{\rho}(t^2) =\rho
(t)$ for $t\in \C^*$. Hence, $\gr \mc W_\lag$ is generated by (the
restrictions of) ${f_i}$ for $1\le i \le r$.

One can show that the restriction of $U(\ga) \rightarrow
\End_\C(Q_\lag)$ to the center $\mc Z(\ga)$ is always injective,
hence we obtain a monomorphism $\mc Z (\ga) \rightarrow \mc W_\lag
=\End_\ga(Q_\lag)$ by \eqref{W:def1}, whose associated graded map
is an isomorphism. If follows from a filtered algebra argument
that this has to be an isomorphism.
\end{proof}

\begin{remark} \label{W:subreg}
The $W$-algebras associated to subregular nilpotent elements are
particularly interesting, as it can be viewed as a noncommutative
deformation of the simple singularities. See Premet \cite{Pr2} and
Gordon-Rumynin \cite{GR} for details.
\end{remark}
\section{An equivalence of categories}
\label{W:sec:cat=}

In this section, we formulate an equivalence of categories due to
Skryabin. The proof of this category equivalence here, due to
\cite{GG}, uses similar ideas of the proof of
Theorem~\ref{W:th:qunantiz}, which we will first complete.
\subsection{Proof of Theorem~\ref{W:th:qunantiz}}
\label{W:subsec:proof}

Recall that $\n$ is graded with respect to the grading $\Gamma$.
We view $U(\ga)$ and $Q_\lag$ as a $\n$-modules via the adjoint
$\n$-action. Then, $U(\ga)$ and $Q_\lag$ are Kazhdan-filtered
$\n$-modules and the canonical map $p:U(\ga) \rightarrow Q_\lag$
is $\n$-module homomorphism. Thus, $\gr U(\ga)$ and $\gr Q_\lag$
are Kazhdan-graded $\n$-modules, and $\gr \ p: \gr U(\ga)
\rightarrow \gr Q_\lag$ is an $\n$-module homomorphism.

Note that $\mc W_{\lag} = H^{0}(\n, Q_\lag)$, the $0$th Lie
algebra cohomology of $\n$ with coefficient in $Q_\lag$. We
reformulate and prove Theorem~\ref{W:th:qunantiz} as follows.
\begin{theorem} \cite{GG}   \label{W:th:2steps}
The map $\nu:  \gr \mc W_\lag \rightarrow \C[\mc S]$ is equal to the
composite $\nu_2 \nu_1$:
$$
\gr H^{0}(\n, Q_\lag) \stackrel{\nu_1}{\longrightarrow}
H^{0}(\n,\gr Q_\lag) \stackrel{\nu_2}{\longrightarrow} \C[\mc S]
$$
where $\nu_1$ and $\nu_2$ are isomorphisms. Moreover, $H^{i}(\n,
Q_\lag) = H^{i}(\n,\gr Q_\lag) = 0$ for $i>0$.
\end{theorem}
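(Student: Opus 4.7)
The plan is to prove the theorem in two stages corresponding to the factorization $\nu = \nu_2 \circ \nu_1$, deriving the vanishing of $H^i(\n, Q_\lag)$ along the way. The first stage is a purely commutative-algebraic/geometric computation on the associated graded side; the second stage then bootstraps to the filtered side via a standard spectral sequence.

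\textbf{Stage 1 (computing $H^*(\n, \gr Q_\lag)$ and identifying $\nu_2$).} First I would identify $\gr Q_\lag$ via diagram \eqref{W:QSlodowy} and $\kappa$ with $\C[e + \m^\perp]$. Lemma~\ref{W:isom} then yields a graded algebra isomorphism
\[
\C[e + \m^\perp] \cong \C[\texttt{M}'] \otimes \C[e + \ga_f] = \C[\texttt{M}'] \otimes \C[\mc S].
\]
Since $\alpha$ is $\texttt{M}'$-equivariant for left translation on the first factor, the restricted adjoint action of $\texttt{M}'$ on $e + \m^\perp$ transports to left translation on $\C[\texttt{M}']$ and trivial action on $\C[\mc S]$; infinitesimally, $\n$ then acts via right-invariant vector fields on $\C[\texttt{M}']$ and trivially on $\C[\mc S]$. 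Since $\texttt{M}'$ is a connected unipotent algebraic group, it is isomorphic as a variety to an affine space, so the Chevalley--Eilenberg complex $\wedge^\bullet \n^* \otimes \C[\texttt{M}']$ is canonically the algebraic de Rham complex of $\texttt{M}'$, whose cohomology is $\C$ in degree zero and vanishes in positive degrees. The Künneth formula then gives $H^i(\n, \gr Q_\lag) = 0$ for $i > 0$ and $H^0(\n, \gr Q_\lag) \cong \C[\mc S]$; the latter isomorphism coincides with $\nu_2$ upon tracing \eqref{W:QSlodowy}.

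\textbf{Stage 2 (spectral sequence for $\nu_1$ and vanishing for $H^*(\n, Q_\lag)$).} I would filter the Chevalley--Eilenberg complex $C^\bullet(\n, Q_\lag) = \wedge^\bullet \n^* \otimes Q_\lag$ by the Kazhdan filtration on $Q_\lag$. Since $F_n Q_\lag = 0$ for $n < 0$ and the filtration is exhaustive, the induced filtration on each $C^p$ is bounded below and exhaustive, so the associated spectral sequence converges to $H^*(\n, Q_\lag)$. Its $E_1$-page equals $H^*(\n, \gr Q_\lag)$, which by Stage 1 is concentrated in cohomological degree zero; hence the spectral sequence collapses at $E_1$, yielding $H^i(\n, Q_\lag) = 0$ for $i > 0$ and a canonical graded algebra isomorphism $\nu_1: \gr H^0(\n, Q_\lag) \xrightarrow{\sim} H^0(\n, \gr Q_\lag)$. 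Composing $\nu_2 \circ \nu_1$ and comparing with the definition recovers the map $\nu$ from \eqref{W:QSlodowy}.

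\textbf{Main obstacle.} The principal technical work sits in Stage 1: rigorously matching the infinitesimal adjoint $\n$-action on $\C[e + \m^\perp]$ with left translation on $\C[\texttt{M}']$ under Lemma~\ref{W:isom}, and then identifying the Chevalley--Eilenberg complex with the algebraic de Rham complex in order to invoke acyclicity of affine space. Once these identifications are verified, Stage 2 is an essentially formal spectral sequence argument, and the compatibility $\nu = \nu_2 \circ \nu_1$ is bookkeeping through the maps of \eqref{W:QSlodowy}.
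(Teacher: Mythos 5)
Your overall strategy is the same as the paper's (and Gan--Ginzburg's): Stage~1 reproduces the identification $\gr Q_\lag \cong \C[\texttt{M}'] \otimes \C[\mc S]$ via Lemma~\ref{W:isom} and the de~Rham/affine-space argument for $H^i(\n, \C[\texttt M'])$, and Stage~2 is the same spectral-sequence comparison. Stage~1 is correct.

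There is, however, a genuine gap in Stage~2. You propose to filter the cochain complex $\wedge^\bullet \n^* \otimes Q_\lag$ by setting $F_p C^k = \wedge^k\n^* \otimes F_p Q_\lag$, i.e.\ by the Kazhdan filtration on the coefficient module alone, and then assert that the $E_1$-page is $H^{\bullet}(\n, \gr Q_\lag)$. This is not so. Since $\n$ is concentrated in $\Gamma$-degrees $\le -1$, the adjoint action of $\n$ on $Q_\lag$ \emph{strictly decreases} Kazhdan degree; hence the module-action part of the Chevalley--Eilenberg differential maps $F_p$ into $F_{p-1}$, and therefore vanishes on the associated graded of your filtration. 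The associated graded differential is then just the Koszul part (i.e.\ $\gr Q_\lag$ is treated as a \emph{trivial} $\n$-module), so your $E_1$-page would be $H^{\bullet}(\n, \C) \otimes \gr Q_\lag$, which is not concentrated in cohomological degree zero ($\n$ is a nontrivial nilpotent Lie algebra), and is in any case not what the theorem requires. The fix is exactly the refinement made in the paper's proof: one must filter by the \emph{total} degree, weighting each $\wedge^k \n^* \otimes Q_\lag$ by the sum of the grading degree of the $\n^*$-factor (recall $\n^* = \bigoplus_{i\ge1}\n^*_i$ is positively graded) and the Kazhdan degree of the $Q_\lag$-factor. With this filtration the module-action terms of the differential become filtration-preserving (the positive shift from $\n^*$ cancels the negative shift from the action on $Q_\lag$), the associated graded complex is genuinely the cochain complex for $H^{\bullet}(\n, \gr Q_\lag)$, and then your convergence and collapse argument goes through as stated.
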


\begin{proof}
By (b) of \ref{W:Kgrading} and Lemma~\ref{W:isom}, we obtain
isomorphisms of vector spaces
$$
\gr Q_\lag \cong \C[\chi+\mathfrak{m}^{*,\perp}] \cong
\C[\texttt M'] \otimes \C[\mc S].
$$
These isomorphisms are actually on the level of $\n$-modules,
where the $\n$-module structure on the third space comes from the
$\n$--adjoint action on its first tensor factor $\C[\texttt M']$.
Now
$$
H^i (\n,\C[\texttt M']) =\delta_{i,0}\C
$$
since the standard cochain complex for Lie algebra cohomology with
coefficients in $\C[\texttt M']$ is just the algebraic de Rham
complex for $\texttt M'$ which admits trivial cohomology for an
affine space such as $\texttt M'$. Putting all things together, we
have proved that $\nu_2$ is an isomorphism and that $H^{i}(\n,\gr
Q_\lag) = 0$ for $i>0$.

\smallskip
Recall from (b) in \ref{W:Kgrading} that the Kazhdan-filtration on
$Q_\lag$ has no negative-degree component. Note in addition that
$\n$ is a negatively graded subalgebra of $\ga$ with respect to
the grading $\Gamma$, and so its dual $\n^{*}$ is positively
graded (with respect to $\Gamma$). We write this graded
decomposition as $\n^{*} = \bigoplus_{i \geq 1}\n^{*}_i$.

Consider the standard cochain complex for computing the
$\n$-cohomology of $Q_\lag$:
\begin{equation} \label{W:eqn:Q}
0 \longrightarrow Q_\lag \longrightarrow \n^{*} \otimes Q_\lag
\longrightarrow \ldots \longrightarrow \wedge^{k}\n^{*} \otimes
Q_\lag \longrightarrow \ldots.
\end{equation}
A filtration on $\wedge^{k}\n^{*} \otimes Q_\lag$ is defined by
letting $F_{p}(\wedge^{k}\n^{*} \otimes Q_\lag)$ be the subspace
of $\wedge^{k}\n^{*} \otimes Q_\lag$ spanned by $(x_{1} \wedge
\ldots \wedge x_{k}) \otimes v$, for all $x_{1} \in
\n^{*}_{i_{1}}, \ldots, x_{k} \in \n^{*}_{i_{k}}$ and $v \in
F_{j}Q_\lag$ such that $i_{1} + \ldots + i_{n} + j \leq p$. This
defines a filtered complex structure on (\ref{W:eqn:Q}).

Taking the associated graded complex of (\ref{W:eqn:Q}) gives the
standard cochain complex for computing the $\n$-cohomology of $\gr
Q_\lag$.

Now consider the spectral sequence with
\begin{displaymath}
E_{0}^{p,q} = F_{p}(\wedge^{p+q}\n^{*} \otimes Q_\lag)/
F_{p-1}(\wedge^{p+q}\n^{*} \otimes Q_\lag).
\end{displaymath}
Then $E_{1}^{p,q} = H^{p+q}(\n, \gr_{p} Q_\lag)$, and the spectral
sequence converges to $E_{\infty}^{p,q} = F_{p}H^{p+q}(\n,
Q_\lag)/ F_{p-1}H^{p+q}(\n, Q_\lag)$. The remaining parts of
Theorem~\ref{W:th:2steps} follow from this and the parts about
$\gr Q_\lag$ established above.
\end{proof}

\subsection{The Whittaker functor}
\label{W:subsec:functors}

As we have established the independence of the $W$-algebras from
the choice of  isotropic subspaces $\lag$, we will switch the
notations for the generalized Gelfand-Graev module and the
$W$-algebra back to $Q_\chi, \mc W_\chi$, to emphasize the crucial
dependence on $\chi$.

In the remainder of this section, we will set up the connections
between $W$-algebras and the category of Whittaker modules. To
that end, we shall fix an nilpotent element $e$ (and hence $\chi$)
and a {\em Lagrangian} subspace $\lag$ of $\ga_{-1}$ once for all.

\begin{definition}  \label{W:def:Whit}
A $\ga$-module $E$ is called a {\em Whittaker module} if $a
-\chi(a)$, $\forall a \in \m$, acts on $E$ locally nilpotently. A
{\em Whittaker vector} in a Whittaker $\ga$-module $E$ is a vector
$x \in E$ which satisfies $(a -\chi(a)) x=0, \forall a \in \m. $

Let $\whmod$ be the category of finitely generated Whittaker
$\ga$-modules.
\end{definition}
Denote the subspace of all Whittaker vectors in $E$ by
$$
\wh (E) = \{v\in E \mid (a -\chi(a)) v=0, \forall a \in \m\}.
$$
Recall the second definition $\W =(U(\ga)/I_\chi)^{\ad \m}$, and
we denote by $\bar{y} \in U(\ga)/I_\chi$ the coset associated to
$y \in U(\ga)$.
\begin{lemma}  \label{W:welldefined}
\begin{enumerate}
\item Given a Whittaker $\ga$-module $E$ with an action map
$\varrho$, $ \wh (E)$ is naturally a $\W$-module by letting
$\bar{y}. v =\varrho(y)v$ for $v\in \wh (E)$ and $\bar{y} \in \W$.

\item For $V \in \Wmod$, $Q_\chi \otimes_{\W}V$ is a Whittaker
$\ga$-module by letting
$$\quad y. (q \otimes v) =(y.q) \otimes v, \quad \text{ for }
y \in U(\ga), q \in Q_\chi =U(\ga)/I_\chi, v \in V.
$$
\end{enumerate}
\end{lemma}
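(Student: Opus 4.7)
For part (1), the plan is to exploit the second definition $\W = (U(\ga)/I_\chi)^{\ad \m}$ in \eqref{W:def2b}. I would first check that $\bar{y}.v := \varrho(y) v$ depends only on the coset of $y$: any $z \in I_\chi$ is a finite sum $\sum u_i (a_i - \chi(a_i))$ with $a_i \in \m$, and $\varrho(a_i - \chi(a_i)) v = 0$ by the Whittaker vector condition on $v$, so $\varrho(I_\chi)\wh(E) = 0$. Next, for $\bar{y} \in \W$ and $a \in \m$, the defining property $(a - \chi(a)) y \in I_\chi$ gives $\varrho(a - \chi(a))\varrho(y) v = \varrho((a - \chi(a))y) v = 0$, so $\varrho(y) v \in \wh(E)$. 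Finally, associativity of the $\W$-action follows from $\bar{y}_1 \bar{y}_2 = \overline{y_1 y_2}$ together with $\varrho$ being an algebra homomorphism. This part is entirely routine.

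For part (2), the strategy is to first produce the $\ga$-action and then establish the Whittaker property. Under the identification $\W^{\op} = \End_{U(\ga)}(Q_\chi)$ in \eqref{W:def1}, an element $\bar{y} \in \W$ corresponds to the endomorphism $\bar{u} \mapsto \overline{u y}$, giving $Q_\chi$ a right $\W$-action which commutes with the left $U(\ga)$-action by $\ga$-linearity of endomorphisms. Hence the formula $x.(q \otimes v) = (xq) \otimes v$ descends through the tensor-balanced relation $q \bar{y} \otimes v = q \otimes \bar{y} v$, and the Lie bracket relation is immediate because $x \in \ga$ acts only on the first factor.

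The only substantive point is local nilpotence of $a - \chi(a)$ on $Q_\chi \otimes_\W V$, and my plan is to reduce this to showing $Q_\chi$ is itself a Whittaker module. Using $u(a - \chi(a)) \in I_\chi$ (since $I_\chi$ is a left ideal), one gets the identity
$$
(a - \chi(a)) \bar{u} = \overline{[a, u]} = \overline{(\ad a) u} \quad \text{in } Q_\chi,
$$
so local nilpotence amounts to $(\ad a)^k u \in I_\chi$ for $k \gg 0$, and it suffices to have $(\ad a)^k u = 0$ in $U(\ga)$. Since $\m \subseteq \bigoplus_{j \le -1} \ga_j$, every homogeneous component of $a$ has strictly negative $\Gamma$-degree and hence acts ad-nilpotently on the bounded-grade Lie algebra $\ga$; the Leibniz rule then propagates this nilpotence to each PBW piece $U_n(\ga)$. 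Given this, for any finite sum $\sum q_i \otimes v_i$, choosing $N$ larger than the individual nilpotence orders on each $q_i$ yields $(a - \chi(a))^N \bigl(\sum q_i \otimes v_i\bigr) = \sum \bigl((a - \chi(a))^N q_i\bigr) \otimes v_i = 0$. I expect the local nilpotence step to be the only mild obstacle, with the care needed being to expand inhomogeneous $a \in \m$ into graded pieces and invoke finiteness of the $\Gamma$-support of $\ga$; everything else is formal bookkeeping with the definitions.
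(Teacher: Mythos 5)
Your proposal is correct and follows essentially the same route as the paper: part (1) via the Whittaker-vector definitions of $\W$ and $\wh(E)$, and part (2) via the $(\ga,\W)$-bimodule structure on $Q_\chi$ followed by reducing local nilpotence of $a-\chi(a)$ on $Q_\chi$ to nilpotence of $\ad a$ on $U(\ga)$ using the negative grading of $\m$ and the PBW filtration. The only cosmetic difference is that for (1) you invoke \eqref{W:def2} while the paper phrases the same check via \eqref{W:def2b}; these are interchangeable.
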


\begin{proof}
(1) Let $v \in \wh (E)$. The formula $\bar{y}. v =\varrho(y)v$ is
well-defined since $\varrho(y) v=0$ for all $y \in I_\chi$ and $v
\in \wh (E)$. Being $\ad \m$-invariants, $\bar{y} \in \W$
satisfies $[\varrho (y), a] \in I_\chi$ and so $\bar{y}. v \in \wh
(E)$. It follows from the $\ga$-module homomorphism $\varrho$ that
the formula defines an action of $\W$.

(2) When we use the first definition $\W =\End_{U(\ga)}
(Q_\chi)^{\text{op}}$, it is trivial to see that $Q_\chi$ is a
$(\ga, \W)$-bimodule and then $Q_\chi \otimes_{\W}V$ is a
$\ga$-module.

To check that $Q_\chi \otimes_{\W}V$ lies in $\whmod$, it suffices
to check that $a -\chi(a)$, $\forall a \in \m$  acts locally
nilpotently on $Q_\chi$ by the definition of the $U(\ga)$-action
on $Q_\chi \otimes_{\W}V$. Since $\m$ is negatively graded with
respect to the grading $\Gamma$, $\ad a =\ad (a -\chi(a))$ for any
$a \in \m$ acts locally nilpotently on $\ga$ and so locally
nilpotently on $U(\ga)$ (and also on $Q_\chi =U(\ga)/I_\chi$) by
induction on the PBW filtration length for $U(\ga)$.
\end{proof}

Let $\Wmod$ be the category of finitely generated $\mc
W_\chi$-modules. We define the {\em Whittaker functor}
\begin{equation*}
\wh:  \whmod \longrightarrow \Wmod, \qquad E \mapsto \wh (E).
\end{equation*}
We define another functor
$$
Q_\chi\otimes_{\W} -:   \Wmod   \longrightarrow \whmod, \qquad V
\mapsto Q_\chi\otimes_{\W}V.
$$

\subsection{The Skryabin equivalence}
\label{W:subsec:Skr=}

The following theorem is due to Skryabin \cite{Sk}, and here we
follow the new proof of \cite{GG}.

\begin{theorem}  \label{W:th:Skr=}
The functor $Q_\chi \otimes_{\W} -:   \Wmod   \longrightarrow
\whmod$ is an equivalence of categories, with $\wh: \whmod
\longrightarrow \Wmod$ as its quasi-inverse.
\end{theorem}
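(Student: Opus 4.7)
The strategy, parallel to the proof of Theorem~\ref{W:th:qunantiz} and following \cite{GG}, is to recast the Whittaker functor as a Lie algebra cohomology functor. Since $\chi|_\m$ is a character (the Lagrangian condition on $\lag$), we may twist the $\m$-action on any Whittaker module $E$ by $-\chi$, writing the twisted module as $E_{-\chi}$ (where $a$ acts as $a-\chi(a)$); then $\wh(E) = H^0(\m, E_{-\chi})$. The entire argument pivots on the vanishing
\begin{equation*}
H^i(\m, E_{-\chi}) = 0 \qquad \text{for all } i > 0 \text{ and all } E \in \whmod,
\end{equation*}
a common generalization of the identity $H^i(\m, Q_\chi) = 0$ established in Theorem~\ref{W:th:2steps}.

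First I would write down the unit $\eta_V \colon V \to \wh(Q_\chi \otimes_\W V)$, $v \mapsto \overline{1} \otimes v$, and the counit $\epsilon_E \colon Q_\chi \otimes_\W \wh(E) \to E$, $\bar y \otimes v \mapsto y \cdot v$, of the adjunction (Lemma~\ref{W:welldefined} already ensures both functors are well-defined on objects). The theorem amounts to showing both transformations are isomorphisms.

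Next I would establish the cohomology vanishing. Any $E \in \whmod$ admits a good filtration $\{F_n E\}$ compatible with the Kazhdan filtration on $U(\ga)$, so that $\gr E$ is a finitely generated $\gr U(\ga) = \C[\ga^*]$-module on which $\m$ acts by the ordinary adjoint action; moreover, local nilpotency of $(a-\chi(a))$ forces $\gr E$ to be supported scheme-theoretically on $\chi + \m^{*,\perp}$. By Lemma~\ref{W:isom} transferred across $\kappa$ (with Lagrangian $\lag$, so $\m = \n$), the adjoint action yields an isomorphism $\chi + \m^{*,\perp} \cong \texttt{M} \times \mc{S}$; consequently $\gr E \cong \C[\texttt{M}] \otimes_\C N$ as an $\m$-module for some graded vector space $N$, with $\m$ acting only on the $\C[\texttt{M}]$ factor. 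The algebraic de Rham acyclicity of the affine space $\texttt{M}$---exactly the input used in Theorem~\ref{W:th:2steps}---yields $H^i(\m, \gr E) = 0$ for $i > 0$, and the same filtered-to-graded spectral sequence argument applied to the Chevalley--Eilenberg complex of $E_{-\chi}$ transfers this to $H^i(\m, E_{-\chi}) = 0$.

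Granted the vanishing, both natural transformations are isomorphisms. For the unit: when $V = \W^{(J)}$ is free, $\wh(Q_\chi \otimes_\W \W^{(J)}) = \wh(Q_\chi)^{(J)} = \W^{(J)}$ since $\wh(Q_\chi) = \W$, and a free presentation together with the exactness of $\wh$ (a consequence of the $H^1$-vanishing) handles general $V$ by a 5-lemma diagram chase. For the counit: the triangle identity $\wh(\epsilon_E) \circ \eta_{\wh(E)} = \mathrm{id}_{\wh(E)}$ combined with the just-established unit isomorphism forces $\wh(\epsilon_E)$ to be an isomorphism, and then exactness of $\wh$ together with the elementary observation that any nonzero finitely generated Whittaker module carries a nonzero Whittaker vector (a generator produces one under the locally nilpotent twisted $\m$-action) forces $\ker \epsilon_E = 0 = \mathrm{coker}\,\epsilon_E$. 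The main obstacle is Step~2: producing a good Kazhdan-compatible filtration on an arbitrary finitely generated Whittaker module whose associated graded has the uniform structure $\C[\texttt{M}] \otimes_\C N$ needed for the spectral sequence to collapse exactly as it did in Theorem~\ref{W:th:2steps}.
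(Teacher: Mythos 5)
Your Step~2 contains a genuine gap that you yourself flag at the end, and it is not a detail to be filled in but a real obstruction: the uniform vanishing $H^i(\m,E_{-\chi})=0$ for $i>0$ and for \emph{every} $E\in\whmod$ is essentially equivalent to the theorem itself (once it is known that $E\cong Q_\chi\otimes_\W \wh(E)$ the vanishing follows, but assuming it beforehand is circular). The specific claim that local nilpotency of $a-\chi(a)$ forces $\gr E\cong \C[\texttt M]\otimes_\C N$ with $\m$ acting only through the first factor does not follow from a generic good filtration: local nilpotency only controls the set-theoretic support of $\gr E$, not freeness over $\C[\texttt M]$. And freeness is really needed --- for instance the $\C[\chi+\m^{*,\perp}]$-module $\C$ concentrated at $\chi$ is a perfectly good $\m$-module supported there, yet $H^{>0}(\m,\C)\neq 0$ in general. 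So the de Rham acyclicity argument collapses only for modules with a very controlled $\gr$, and there is no reason an arbitrary finitely generated Whittaker module has such a filtration.

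The paper's proof avoids this entirely by never invoking exactness of $\wh$ on all of $\whmod$. It proves the cohomology vanishing only for modules of the shape $Q_\chi\otimes_\W V$, where the associated graded has the explicit tensor form $\gr(Q_\chi\otimes_\W V)\cong \gr Q_\chi\otimes_{\gr\W}\gr V \cong \C[\texttt M']\otimes\gr V$ coming from Lemma~\ref{W:isom}; this is what makes the spectral sequence collapse. With only this restricted vanishing in hand, the counit is handled by splitting the $4$-term exact sequence $0\to E'\to Q_\chi\otimes_\W\wh(E)\xrightarrow{\gamma}E\to E''\to 0$ into two short exact sequences: injectivity of $\gamma$ is cheap, since $\wh(E')=E'\cap\wh(Q_\chi\otimes_\W\wh(E))=E'\cap\wh(E)=0$ by the unit isomorphism and a nonzero Whittaker module has a nonzero Whittaker vector; surjectivity then follows by applying the long exact cohomology sequence to the second short exact sequence, where the only higher-cohomology input needed is $H^1\bigl(\m,Q_\chi\otimes_\W\wh(E)\bigr)=0$. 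You should replace your uniform vanishing claim by this more economical routing --- it is exactly the point of Gan and Ginzburg's argument that one never has to compute the $\m$-cohomology of a general Whittaker module.
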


\begin{proof}

(1) We first prove that $\wh (Q_\chi\otimes_{\W}V) =V$.

Assume $V$ is generated as a $\W$-module by a finite-dimensional
subspace $V_0$. This gives rise to a $\W$-filtered module
structure on $V$ by letting $F_n V =(F_n \W). V_0$. Note that
$H^0(\m, Q_\chi\otimes_{\W}V) =\wh (Q_\chi\otimes_{\W}V)$, where
we regard $Q_\chi\otimes_{\W}V$ as an $\m$-module  with
$\chi$-twisted action.

We shall establish the following Claim in Lie algebra cohomology;
this readily implies that $H^0(\m, Q_\chi\otimes_{\W}V) \cong V$,
which is equivalent to $\wh (Q_\chi\otimes_{\W}V) =V$.

{\bf Claim~1}.  $\gr H^0(\m, (Q_\chi\otimes_{\W}V))
\stackrel{\mu_1}{\simeq} H^0(\m, \gr (Q_\chi\otimes_{\W}V))
\stackrel{\mu_2}{\simeq}  \gr V$; moreover,    $ H^i(\m,
(Q_\chi\otimes_{\W}V)) =H^i(\m, \gr (Q_\chi\otimes_{\W}V))=0$ for
$i>0$.

The two isomorphisms $\mu_1, \mu_2$ in the Claim are parallel to
the two isomorphisms $\nu_1, \nu_2$ in Theorem~\ref{W:th:2steps},
and they will be proved in the same strategy as in the proof of
Theorem~\ref{W:th:2steps}.

By Lemma~\ref{W:isom} and the diagram \eqref{W:QSlodowy}, we have
\begin{align*}
\gr (Q_\chi\otimes_{\W}V) &\cong \gr  Q_\chi\otimes_{\gr \W}\gr V
\\
 & \cong (\C[\texttt{M}'] \otimes {\gr \W})  \otimes_{\gr \W} \gr V
\cong  \C[\texttt{M}'] \otimes  \gr V.
\end{align*}
The isomorphism $\mu_2$ follows quickly from this.

The isomorphism $\mu_1$ follows exactly as the argument for the
isomorphism $\nu_1$ in the proof of Theorem~\ref{W:th:2steps}, once
we note that $Q_\chi\otimes_{\W}V$ is filtered with no
negative-degree components and $\m^*$ is $\N$-graded, and then
apply a spectral sequence argument.

(2) We shall show that, for any $E \in \whmod$, the canonical map
$$
\gamma: Q_\chi\otimes_{\W} \wh (E) \longrightarrow E, \quad
\bar{y} \otimes v \mapsto y.v
$$
is an isomorphism, where $\bar{y} \in Q_\chi =U(\ga)/I_\chi$ is
associated to $y \in U(\ga)$.

We first note that $\gamma$ is well-defined since $Q_\lag =U(\ga)
/I_\chi$ and $I_\chi$ is generated by $a-\chi(a), \forall a \in
\m$. Also observe that $\wh (E) =0$ implies that $E=0$.

Take an exact sequence of $\ga$-modules:
\begin{equation} \label{W:exact}
0 \longrightarrow E' \longrightarrow Q_\chi \otimes_{\W} \wh(E)
\stackrel{\gamma}{\longrightarrow} E \longrightarrow E''
\longrightarrow 0.
\end{equation}

Let us show that $\gamma$ is injective (i.e. $E'=0$). Indeed,
$$
\wh (E') =E' \cap \wh(Q_\chi \otimes_{\W} \wh(E))
 = E' \cap \wh(E) =0
$$
where the second equality follows from Part~ (1) above and the third equality
follows by definition of $\gamma$. Hence $E' =0$.

Now we prove that $\gamma$ is surjective (i.e. $E''=0$).
\eqref{W:exact} reduces now to a short exact sequence
$$0 \longrightarrow  Q_\chi \otimes_{\W} \wh(E)
\stackrel{\gamma}{\longrightarrow} E \longrightarrow E''
\longrightarrow 0
$$
which gives rise to a long exact sequence
$$
0 \rightarrow  H^0(\m, Q_\chi \otimes_{\W} \wh(E))
\stackrel{\gamma^*}{\rightarrow} H^0(\m, E) \rightarrow H^0(\m,
E'') \rightarrow H^1(\m, Q_\chi \otimes_{\W} \wh(E))
$$
where $H^1\bigl(\m, Q_\chi \otimes_{\W} \wh(E)\bigr)=0$ by
Claim~1. Then, this exact sequence can be rewritten as
$$
0 \longrightarrow  \wh (Q_\chi \otimes_{\W} \wh(E))
\stackrel{\gamma^*}{\longrightarrow} \wh(E) \longrightarrow
\wh(E'') \longrightarrow 0
$$
By (1), $\wh (Q_\chi \otimes_{\W} \wh(E)) =\wh(E)$ and $\gamma^*$
is an isomorphism. So, $\wh (E'') =0$, whence $E''=0$.
\end{proof}
\section{Good $\Z$-gradings in type $A$}
\label{W:sec:A}

In this section, we will describe the classification of good
$\Z$-gradings on $\ga =\gl_N$ or $\sgl_N$, due to Elashvili and
Kac \cite{EK}.

\subsection{Pyramids of shape $\la$}
\label{W:pyramid}

Given a partition $\la =(\la_1, \la_2, \ldots)$ of $N$, we
construct a combinatorial object, called pyramids (of shape
$\la$). We will illustrate the process by building pyramids of
shape $\la =(3,2,2)$.

We start with a (first) row of $\la_1=3$ boxes of size $2$ units
by $2$ units, with column numbers $1-\la_1, 3-\la_1, \cdots \la_1
-1$ (which is $-2, 0, 2$ in this example). We mark by $\bullet$ to
indicate the column number $0$.
$$
\begin{picture}(220,20)
\put(80,0){\line(1,0){60}} \put(80,20){\line(1,0){60}}
\put(140,0){\line(0,1){20}} \put(80,0){\line(0,1){20}}
\put(100,0){\line(0,1){20}} \put(120,0){\line(0,1){20}}
 \put(110,0){\circle*{3}}
\end{picture}
$$
Then, we add a (second) row of $\la_2=2$ boxes on top of the row
$1$. The rule is: keep the stair shape with permissible shifts by
integer units. In this example, we have three possibilities, and
this gives rise to $3$ pyramids of shape $(3,2)$.
$$
\begin{picture}(220,40)
\put(0,0){\line(1,0){60}} \put(0,20){\line(1,0){60}}
\put(20,40){\line(1,0){40}} \put(60,0){\line(0,1){40}}
\put(0,0){\line(0,1){20}} \put(20,0){\line(0,1){40}}
\put(40,0){\line(0,1){40}}
 \put(30,0){\circle*{3}}

\put(80,0){\line(1,0){60}} \put(80,20){\line(1,0){60}}
\put(80,40){\line(1,0){40}} \put(140,0){\line(0,1){20}}
\put(80,0){\line(0,1){40}} \put(100,0){\line(0,1){40}}
\put(120,0){\line(0,1){40}}
 \put(110,0){\circle*{3}}

\put(160,0){\line(1,0){60}} \put(160,20){\line(1,0){60}}
\put(170,40){\line(1,0){40}} \put(220,0){\line(0,1){20}}
\put(160,0){\line(0,1){20}} \put(180,0){\line(0,1){20}}
\put(200,0){\line(0,1){20}} \put(170,20){\line(0,1){20}}
\put(190,20){\line(0,1){20}} \put(210,20){\line(0,1){20}}
\put(190,0){\circle*{3}}
\end{picture}
$$

Then we repeat the process with the same rule by adding now a
(third) row of $\la_3=2$ boxes on top of row $2$. In this example,
we have only one permissible way of doing so, and so obtain three
pyramids of shape $(3,2,2)$ below (where the column numbers are
also indicated).
$$
\begin{picture}(220,70)

\put(0,10){\line(1,0){60}} \put(0,30){\line(1,0){60}}
\put(20,50){\line(1,0){40}} \put(20,70){\line(1,0){40}}
\put(60,10){\line(0,1){60}} \put(0,10){\line(0,1){20}}
\put(20,10){\line(0,1){40}} \put(40,10){\line(0,1){60}}
\put(20,50){\line(0,1){20}}
 \put(30,10){\circle*{3}}
\put(5,0){-2} \put(27,0){0} \put(48,0){2}

\put(80,10){\line(1,0){60}} \put(80,30){\line(1,0){60}}
\put(80,50){\line(1,0){40}} \put(80,70){\line(1,0){40}}
\put(140,10){\line(0,1){20}} \put(80,10){\line(0,1){60}}
\put(100,10){\line(0,1){60}} \put(120,10){\line(0,1){60}}
 \put(110,10){\circle*{3}}
\put(85,0){-2} \put(107,0){0} \put(128,0){2}

\put(160,10){\line(1,0){60}} \put(160,30){\line(1,0){60}}
\put(170,50){\line(1,0){40}} \put(170,70){\line(1,0){40}}
\put(220,10){\line(0,1){20}} \put(160,10){\line(0,1){20}}
\put(180,10){\line(0,1){20}} \put(200,10){\line(0,1){20}}
\put(170,30){\line(0,1){40}} \put(190,30){\line(0,1){40}}
\put(210,30){\line(0,1){40}} \put(190,10){\circle*{3}}
\put(165,0){-2} \put(187,0){0} \put(208,0){2}
\end{picture}
$$

\begin{xca}   \label{W:xca:pyram}
There are $7$ pyramids of shape $(4,1)$ as there are $7$
permissible way of putting one box on top of
$$
\begin{picture}(220,30)
\put(80,10){\line(1,0){80}} \put(80,30){\line(1,0){80}}
\put(80,10){\line(0,1){20}} \put(100,10){\line(0,1){20}}
\put(120,10){\line(0,1){20}} \put(140,10){\line(0,1){20}}
\put(160,10){\line(0,1){20}}
 \put(120,10){\circle*{3}}
 \put(85,0){-3} \put(105,0){-1} \put(130,0){1} \put(148,0){3}
\end{picture}
$$
\end{xca}

\subsection{}
\label{W:subsec:bij}

Given a pyramid $P$ of shape $\la$, let us fix a labeling by
numbers $\{1,2,\ldots, N\}$  of the $N$ boxes in $P$. A convenient
choice is to label downward from left to right in an increasing
order.

Let us take the second pyramid of shape $(3,2,2)$ as an example
with $N=7$. Our labeled pyramid reads
$$
\begin{picture}(220,70)

\put(80,10){\line(1,0){60}} \put(80,30){\line(1,0){60}}
\put(80,50){\line(1,0){40}} \put(80,70){\line(1,0){40}}
\put(140,10){\line(0,1){20}} \put(80,10){\line(0,1){60}}
\put(100,10){\line(0,1){60}} \put(120,10){\line(0,1){60}}
 \put(110,10){\circle*{3}}
 \put(90,20){\makebox(0,0){{3}}}
 \put(110,20){\makebox(0,0){{6}}}
 \put(130,20){\makebox(0,0){{7}}}
 \put(90,40){\makebox(0,0){{2}}}
 \put(110,40){\makebox(0,0){{5}}}
  \put(90,60){\makebox(0,0){{1}}}
\put(110,60){\makebox(0,0){{4}}}
\put(85,0){-2} \put(107,0){0} \put(128,0){2}
\end{picture}
$$

We fix a standard basis $v_i (1\le i\le N)$ of $\C^N$ associated
to the above labelling. Let
$$e =e^P =E_{14} +E_{25} +E_{36} +E_{67}
$$
be the nilpotent element in $\ga$ which sends a vector $v_i$ to
$v_{\texttt R(i)}$ where $\texttt R(i)$ denotes the label to the
right of $i$ in the labelled pyramid $P$ (by convention
$v_{\texttt R(i)} =0$ whenever $\texttt R(i)$ is not defined).
Note that $e$ has $J_\la$ as its Jordan form.

A $\Z$-grading $\Gamma^P$ of $\ga$ is determined by letting the
degree of a root vector for a simple root $\alpha_i =\varepsilon_i
-\varepsilon_{i+1}$ to be $\col_{i+1}  -\col_i$, where $\col_i$
denotes the column number of the box labelled by $i$ in $P$. In
other words, the grading operator is $h^P =h^P_\partial = -
\text{diag} \, (\col_1, \col_2, \ldots, \col_N)$ (note that $h^P$
is unique up to a shift of a scalar multiple of the identity
matrix $I_N$, and there is a unique shift which results to a
traceless matrix).

The grading $\Gamma$ is even if and only if a pyramid is {\em
even} in the sense that every box in $P$ lies immediately on top
of at most one box. For example, for $\la =(3,2,2)$ above, the
first and second pyramids are even while the third one is not.

\begin{example}  \label{W:ex:gl3}
Let $\ga =\gl_3$ and $e =E_{13}.$ Then the good grading for $e$ in
Example~\ref{W:ex:sl3}~(2) corresponds to the first pyramid and  the
Dynkin grading for $e$ corresponds to the third pyramid below:

$$
\begin{picture}(220,40)
\put(0,0){\line(1,0){40}} \put(0,20){\line(1,0){40}}
\put(0,40){\line(1,0){20}} \put(0,0){\line(0,1){40}}
\put(20,0){\line(0,1){40}} \put(40,0){\line(0,1){20}}
 \put(20,0){\circle*{3}}
 \put(10,10){\makebox(0,0){{2}}}
 \put(30,10){\makebox(0,0){{3}}}
 \put(10,30){\makebox(0,0){{1}}}

\put(80,0){\line(1,0){40}} \put(80,20){\line(1,0){40}}
\put(100,40){\line(1,0){20}} \put(80,0){\line(0,1){20}}
\put(100,0){\line(0,1){40}} \put(120,0){\line(0,1){40}}
 \put(100,0){\circle*{3}}
 \put(90,10){\makebox(0,0){{1}}}
 \put(110,10){\makebox(0,0){{3}}}
 \put(110,30){\makebox(0,0){{2}}}

\put(160,0){\line(1,0){40}} \put(160,20){\line(1,0){40}}
\put(170,40){\line(1,0){20}} \put(160,0){\line(0,1){20}}
\put(180,0){\line(0,1){20}} \put(200,0){\line(0,1){20}}
\put(170,20){\line(0,1){20}} \put(190,20){\line(0,1){20}}
\put(180,0){\circle*{3}}
 \put(170,10){\makebox(0,0){{1}}}
 \put(190,10){\makebox(0,0){{3}}}
 \put(180,30){\makebox(0,0){{2}}}
\end{picture}
$$
\end{example}

\begin{xca} \label{W:xca:gl3good}
Describe explicitly the good $\Z$-grading
corresponding to the second pyramid.
\end{xca}

\begin{theorem}  \label{W:th:EK} \cite{EK}
Let $\ga =\gl_N$ or $\sgl_N$, and let $\la$ be a partition of $N$.
There exists a one-to-one correspondence between the set of
pyramids of shape $\la$ and the set of good $\Z$-gradings for a
nilpotent matrix of Jordan shape $\la$ up to $GL_N$-conjugation,
$$
\{\text{Pyramids of shape } \la\}  \longrightarrow \{\text{good
}\Z\text{-gradings for a nilpotent matrix in }\mc O_\la\}/GL_N
$$
by sending $P$ to $\Gamma^P$.
\end{theorem}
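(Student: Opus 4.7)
The plan is to prove the bijection in three stages: well-definedness, injectivity, and surjectivity. The central tool throughout is $\sgl_2$-representation theory applied to $\gl_N = \End(V)$ with $V = \C^N$.

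For well-definedness, I would check that $(\Gamma^P, e^P)$ satisfies the good-grading axioms. The condition $e^P \in \ga_2$ is immediate: each matrix-unit summand of $e^P$ corresponds to two boxes of the same row whose $\col$-numbers differ by $2$. To verify \eqref{W:vanish} (equivalent by Lemma~\ref{W:inj=sur} to \eqref{W:inj}/\eqref{W:surj}), I would endow $V$ with a ``row-wise'' $\sgl_2$-action: let $f'$ be a Kostant-style shift within each row (as in Example~\ref{W:ex:regular}, applied to each Jordan block separately), and set $h' := [e^P, f']$. Then $\{e^P, h', f'\}$ is an $\sgl_2$-triple and $h^P = h' + \zeta$, where $\zeta \in \ga^{e^P} \cap \ga_0$ acts by the scalar $-c_i$ on the $i$-th Jordan block $V^{(i)}$ (with $c_i$ the horizontal center of row $i$ of $P$). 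Decomposing $\gl_N = \bigoplus_{i,j} \Hom(V^{(j)}, V^{(i)})$ under $\ad\sgl_2$, each summand splits into irreducibles of dimensions $|\lambda_i - \lambda_j|+1, \ldots, \lambda_i + \lambda_j - 1$. The centralizer $\ga^{e^P}$ consists of the $\sgl_2$-highest weight vectors; one in the dimension-$(2m+1)$ summand of $\Hom(V^{(j)}, V^{(i)})$ has $\ad h'$-weight $2m$ and $\ad \zeta$-eigenvalue $c_j - c_i$, hence $\ad h^P$-weight $2m + c_j - c_i$. Since $2m \ge |\lambda_i - \lambda_j|$, the pyramid stair-shape condition, namely $|c_i - c_j| \le |\lambda_i - \lambda_j|$ for all pairs (equivalent via telescoping to the adjacent-row straddle rule that defines $P$), forces all such weights to be $\ge 0$.

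For injectivity, I would reconstruct $P$ from the pair $(e^P, h^P)$ in a $GL_N$-invariant way. Decompose $V$ into Jordan blocks of $e^P$; these give the rows of $P$ with lengths $\lambda_i$. On each Jordan block, the $h^P$-eigenvalues (distinct, spaced by $2$) recover the negatives of the column numbers of the boxes in that row. Since $GL_N$-conjugation only relabels the basis, the pyramid is an invariant of the conjugacy class of $(e^P, h^P)$, so distinct pyramids yield non-conjugate pairs.

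For surjectivity, starting from a good grading $\Gamma$ for a nilpotent $e$ of Jordan type $\la$, I would invert the above construction. Apply Lemma~\ref{W:lem:EK} to obtain a $\Gamma$-graded $\sgl_2$-triple $\{e, h, f\}$ with $h \in \ga_0$, $f \in \ga_{-2}$. Then $\zeta := h_\Gamma - h$ commutes with both $e$ (since $[h_\Gamma, e] = 2e = [h, e]$) and $h$ (both lie in $\ga_0$). Decompose $V$ into simultaneous $\sgl_2$-irreducibles and $\zeta$-eigenspaces $V^{(i)}$ of dimension $\lambda_i$ with $\zeta$-eigenvalue $s_i$, and set $c_i := -s_i$. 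Running the $\sgl_2$-weight analysis of the previous step in reverse, the good-grading condition $\ga^e \cap \ga_{<0} = 0$ translates into $|c_i - c_j| \le |\lambda_i - \lambda_j|$ for all pairs; after sorting rows in weakly decreasing $\lambda_i$, this is exactly the defining condition of a pyramid $P$. Choosing an $\sgl_2$-weight basis of each $V^{(i)}$ and labeling the boxes of $P$ accordingly yields a vector-space isomorphism $V \cong \C^N$ carrying $(\Gamma, e)$ to $(\Gamma^P, e^P)$, giving the required $GL_N$-equivalence.

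The main obstacle I expect is the precise translation between the analytic good-grading axiom and the combinatorial pyramid condition: verifying that the stair/straddle rule defining pyramids is equivalent to the inequality $|c_i - c_j| \le |\lambda_i - \lambda_j|$ requires careful bookkeeping of $\ad h^P$-weights of $\sgl_2$-highest weight vectors across all pairs $(i,j)$ of Jordan blocks, and noting that the pairwise constraint is equivalent to its adjacent-row specialization by telescoping. A minor technical nuisance for $\ga = \sgl_N$ is that $h^P$ must be shifted by a scalar multiple of $I_N$ to achieve tracelessness, but this shift does not affect the grading.
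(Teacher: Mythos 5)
Your proof is correct but takes a genuinely different route from the paper. The paper proceeds by a direct, explicit construction: it exhibits a basis $z_{j,i;k_j}$ of $\ga_{e^P}$ (with $z_{j,i;k_j}$ sending the generator of the $i$-th Jordan string to $e^{k_j}$ applied to that of the $j$-th) and verifies by counting against $\dim \ga_e = \sum \min(\la_i,\la_j)$ that these span; the grading degrees of the $z_{j,i;k_j}$ then translate directly into the left and right staircase inequalities. For the inverse map the paper places rows of a ``generalized'' pyramid by reading off $\deg E_{j1}$ and then checks the staircase conditions via the degrees of $z_{j,i;0}$ and $z_{j,i;\la_j-\la_i}$. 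Your approach instead packages the same information through $\sgl_2$-representation theory: you write $h^P = h' + \zeta$ with an auxiliary Dynkin $\sgl_2$-triple and a block-scalar $\zeta$, decompose $\gl_N = \bigoplus \Hom(V^{(j)}, V^{(i)})$ by Clebsch--Gordan, identify $\ga_{e^P}$ with highest-weight vectors, and reduce the good-grading condition to the single inequality $|c_i - c_j| \le |\la_i - \la_j|$, observing that the all-pairs version is equivalent to the adjacent-row version by telescoping. Your elements are of course the same as the paper's $z_{j,i;k_j}$ in disguise (the minimal-$k_j$ ones are exactly the lowest-dimensional highest-weight vectors), but the $\sgl_2$-decomposition makes the dimension count and the degree computation automatic rather than manual, at the cost of invoking more structure. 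Your surjectivity step, using Lemma~\ref{W:lem:EK} and $\zeta := h_\Gamma - h$, is essentially the paper's inverse construction reformulated in the same language. Two small points worth tightening: the equivalence of \eqref{W:vanish} with \eqref{W:inj} is direct (since $\ga_e \cap \ga_j = \ker(\ad e|_{\ga_j})$) rather than a consequence of Lemma~\ref{W:inj=sur}; and in the injectivity step you should say explicitly that the multiset of pairs $(\la_i, c_i)$ is recovered canonically by restricting $\zeta$ to the highest- (or lowest-) weight space of each $\sgl_2$-isotypic component of $V$, since the Jordan decomposition of $V$ is not itself canonical when block sizes repeat.
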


\begin{proof}
It is easy to see the map is well-defined: different choices of
labelling of $P$ leads to another nilpotent matrix in the same
nilpotent orbit.

To see that $\Gamma^P$ is a good $\Z$-grading for $e =e^P$, we
will actually construct a $\N$-graded homogeneous basis for the
centralizer $\ga_e$ explicitly. Let $\ell =\la_1'$. We will choose
to label the first box of  row $1,\ldots, \ell$ of $P$ upward by
$1, \ldots, \ell$. Note that $\{e^{k_i} v_i \mid 1\le i \le \ell,
0\le k_i \le \la_i-1\}$ is a linear basis of $\C^N$, and indeed
each such $e^{k_i} v_i$ is equal to $v_j$ for some $1 \le j \le N$
which appears in the same row as $i$ in $P$.

An element $z$ in $\ga_e$ is determined by the values $z(v_i)$ for
$ 1\le i \le \ell$, since $z(e^k v_i) =e^k z(v_i)$. Consider for
now $z_{j,i;k_j} \in \ga_e  (1\le i,j \le \ell)$ such that
$z_{j,i;k_j}(v_i) =e^{k_j} v_j$ and  $z_{j,i;k_j}(v_{i'}) =0$ for
$1\le i' \le \ell, i' \neq i$. Then since $e^{\la_i +k_j} v_j
=z_{j,i;k_j}(e^{\la_i} v_i) =0$ and recall  $0\le k_j < \la_j$, we
see that the $k_j$'s have to satisfy the inequality:
$$
\la_j >k_j \ge \max(0, \la_j -\la_i), \quad \forall i,j
$$
and this condition is sufficient for $z_{j,i;k_j}\in \ga_e$ to be
well-defined.

Since $\min(\la_i, \la_j) =\la_j -\max(0, \la_j -\la_i)$, there
are in total $\sum_{1\le i,j \le \ell} \min(\la_i,\la_j)$ of such
$z_{j,i;k_j} \in \ga_e$. These elements $z_{j,i;k_j}$'s  in
$\ga_e$ are manifestly homogeneous, $\N$-graded and linearly
independent. Hence, they must form a basis for $\ga_e$, thanks to
the well-known identity $\dim \ga_e =\sum_{1\le i,j \le \ell}
\min(\la_i,\la_j)$ (which is also equal to $\sum_{i\ge 1} \la_i'$
in terms of the conjugate partition $\la'$).

Now we shall construct an inverse map. Let $e$ be a nilpotent
element in the nilpotent orbit $\mc O_\la$ and let $\Gamma =\ad h$
be a good $\Z$-grading for $e$. Up to $GL_N$-conjugation, we can
assume that $e =J_\la$, the canonical Jordan canonical form, and
that $h$ is a diagonal matrix. We can suitably relabel the
standard basis vectors $v_i$'s of $\C^N$ (or by a conjugation by
some permutation matrix in $GL_N$), so that the standard basis of
$\C^N$ consists of $\{e^{k_i} v_i \mid 1\le i \le \ell, 0\le k_i
\le \la_i-1\}$. In particular, $E_{ji}$ for all $i,j$ are
homogeneous with respect to the grading $\Gamma$. Now we visualize
a labeled ``generalized" pyramid $P$ of shape $\la$ as follows
(here generalized means that the staircase conditions on both
sides of $P$ are not verified at the moment): the row one of $P$
is fixed with column numbers $1-\la_1, 3-\la_1, \cdots \la_1 -1$
with the leftmost box labelled by $1$, and the remaining rows are
fixed by letting the leftmost box of row $j$ to have a column
number $\deg E_{j1} +1-\la_1$.

Take the basis $z_{j,i;k_j}$ for $\ga_e$ constructed earlier. The
good grading $\Gamma$ implies that $z_{j,i;0} =E_{j i} +\ldots$
have non-negative integral degrees, and so do $E_{j i}$, for
$1\leq j<i \leq \ell$. This ensures that the left-hand side of the
generalized pyramid $P$ is of staircase shape. Similarly, the
consideration of $z_{j,i;\la_j -\la_i}$ for $1\leq j<i \leq \ell$
shows the right-hand side of $P$ is of staircase shape.

Hence, $P$ is indeed a pyramid. It follows by the construction of
$P$ that $e =e^P$.
\end{proof}

\begin{corollary}   \label{W:cor:even}
There exists a bijection between even pyramids of size $N$ and
even good $\Z$-gradings of $\ga$ up to $GL_N$-conjugation (by
sending $P$ to $\Gamma^P$).
\end{corollary}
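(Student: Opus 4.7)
The plan is to derive the even version directly from Theorem~\ref{W:th:EK} by characterizing evenness on both sides of the bijection $P \mapsto \Gamma^P$ in terms of a parity condition on column numbers. Since Theorem~\ref{W:th:EK} already gives a bijection between pyramids of shape $\la$ and good $\Z$-gradings for a nilpotent matrix in $\mc O_\la$, it suffices to show that $P$ is an even pyramid if and only if $\Gamma^P$ is an even $\Z$-grading, and then take the union over all partitions $\la$ of $N$.

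The first key step is to record the elementary observation that, under the labelling convention in the proof of Theorem~\ref{W:th:EK}, the degree of the matrix unit $E_{ij}$ with respect to $\Gamma^P$ equals $\col_j - \col_i$, where $\col_k$ is the column number of the box labelled $k$ in $P$. Indeed, the grading operator is $h^P = -\text{diag}(\col_1, \ldots, \col_N)$ up to a scalar shift. Consequently, $\Gamma^P$ is an even $\Z$-grading if and only if $\col_j - \col_i$ is even for all $i,j$, equivalently, all column numbers $\col_1, \ldots, \col_N$ in $P$ have the same parity.

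The second key step is to translate the even-pyramid condition into the same parity condition. Within each row of $P$, adjacent boxes have column numbers differing by $2$, so all boxes in a single row share a common parity. Thus the parity of column numbers can only change from one row to the next. The condition that every box in $P$ lies immediately on top of at most one box in the row below is exactly the condition that every box of row $j+1$ is vertically aligned with a box of row $j$ (as opposed to straddling two of them), which in turn is exactly the condition that the column numbers in row $j+1$ have the same parity as those in row $j$. Therefore $P$ is an even pyramid if and only if all column numbers in $P$ share a single common parity. Combining the two steps yields the claimed equivalence and hence the corollary.

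The main (but minor) obstacle is simply making precise the geometric notion of ``lies immediately on top of'' so that the parity reformulation is transparent; once one unpacks the staircase construction in \ref{W:pyramid} (boxes of width $2$, integer-unit shifts between rows, markers at column $0$), this amounts to noting that the row-to-row horizontal offset is always an even integer in the even-pyramid case and is an odd integer whenever a box straddles two boxes in the row below. No further cohomology or Lie-algebraic input is required beyond what is already established in Theorem~\ref{W:th:EK}.
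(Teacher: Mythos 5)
Your argument is correct and is essentially the approach the paper takes; the paper compresses it into the one-line remark ``follows from the way a grading is defined using a pyramid,'' having already asserted in the text of \ref{W:subsec:bij} the key equivalence you spell out (that $\Gamma^P$ is even if and only if every box of $P$ lies on top of at most one box). You have simply made explicit the parity bookkeeping — boxes in a row share a parity of column number since boxes are two units wide, an even row-to-row offset preserves parity (vertical alignment) while an odd offset flips it (straddling), and $\deg E_{ij} = \col_j - \col_i$ is even for all $i,j$ exactly when all column numbers share a parity — which is exactly what the paper leaves to the reader.
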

\begin{proof}
Follows from the way a grading is defined using a pyramid.
\end{proof}

\begin{corollary} \label{W:cor:even exist}
Given a nilpotent element $e \in \ga$, there exists an even good
$\Z$-grading of $\ga$ for $e$.
\end{corollary}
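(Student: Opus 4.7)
The plan is to invoke the correspondences established in the two previous results and reduce the existence of an even good $\Z$-grading to the existence of an even pyramid. By Theorem~\ref{W:th:EK}, good $\Z$-gradings for nilpotent elements in the orbit $\mc O_\la$ are classified (up to $GL_N$-conjugation) by pyramids of shape $\la$, and by Corollary~\ref{W:cor:even}, evenness of the grading corresponds exactly to evenness of the associated pyramid. Hence, given $e \in \ga$ of Jordan type $\la$, it suffices to exhibit a single even pyramid of shape $\la$ and then transport the resulting grading back to $e$ by a $GL_N$-conjugation.

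The simplest candidate I would use is the left-justified pyramid: for each row of length $\la_i$, place its leftmost box in column $1-\la_1$, matching the leftmost column of row~$1$. Since $\la$ is a partition ($\la_1 \ge \la_2 \ge \cdots$), row~$i$ then occupies columns $1-\la_1, 3-\la_1, \ldots, 2\la_i - \la_1 - 1$, which is contained in the column range of row~$i-1$, so the staircase condition defining a pyramid is satisfied. Moreover, rows $i$ and $i-1$ share identical column coordinates on the overlap, so every box of row~$i$ sits directly on top of exactly one box of row~$i-1$; this is precisely the evenness criterion recalled in Section~\ref{W:pyramid}.

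Applying Theorem~\ref{W:th:EK} to this even pyramid produces an even good $\Z$-grading on $\ga$ adapted to some nilpotent element $e' \in \mc O_\la$. Conjugating by an element of $GL_N$ carrying $e'$ to $e$ then yields the desired even good $\Z$-grading for $e$ itself.

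There is no real obstacle once the pyramid classification is in hand: the corollary amounts to the combinatorial observation that every partition admits an even pyramid, and left-justification furnishes an explicit witness. (Right-justification or, when parities cooperate, centering would work equally well.)
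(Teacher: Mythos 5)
Your proposal is correct and matches the paper's argument: the paper's proof simply observes that "the Young diagram $\la$ in French fashion is an even pyramid of shape $\la$," which is exactly the left-justified pyramid you construct, with the reduction via Theorem~\ref{W:th:EK} and Corollary~\ref{W:cor:even} being implicit there. You have merely spelled out the staircase and evenness verifications that the paper leaves to the reader.
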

\begin{proof}
Assume the Jordan form of $e$ corresponds to a partition $\la$.
The Young diagram  $\la$ in French fashion is an even pyramid of
shape $\la$.
\end{proof}

A pyramid $P$ is call {\em symmetric} if $P$ is self-dual with
respect to the reflection along the vertical line passing through
the mid-point of row one of $P$. For example, the third pyramid of
shape (3,2,2) in Section~\ref{W:pyramid} is symmetric.

\begin{corollary} \label{W:cor:Dynkin}
There exists a bijection from the set of symmetric pyramids of
size $N$ to the set of Dynkin gradings of $\ga$ up to
$GL_N$-conjugation (by sending $P$ to $\Gamma^P$).
\end{corollary}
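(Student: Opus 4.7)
The plan is to refine the bijection of Theorem~\ref{W:th:EK} by identifying the symmetric pyramids of shape $\la$ as exactly those whose associated good grading $\Gamma^P$ is Dynkin; the corollary then follows by restriction.

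First I would show that if $P$ is a symmetric pyramid of shape $\la$, then $\Gamma^P$ is a Dynkin grading. By symmetry, every row $i$ of $P$ is centered on the vertical line through the midpoint of row~$1$, so the column numbers of the boxes in row $i$ are $1-\la_i, 3-\la_i, \ldots, \la_i-1$. Since $h^P = -\text{diag}(\col_1, \ldots, \col_N)$, the eigenvalues of $h^P$ on the $i$th Jordan block of $e^P$ are $\la_i-1, \la_i-3, \ldots, 1-\la_i$, matching the diagonal element of the standard $\sgl_2$-triple for $J_{\la_i}$ in Example~\ref{W:ex:regular}. Taking $f^P$ to be the block-diagonal matrix whose blocks are the subdiagonal matrices of Example~\ref{W:ex:regular}, the triple $\{e^P, h^P, f^P\}$ satisfies the $\sgl_2$ relations, so $\Gamma^P$ is Dynkin.

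Conversely, suppose $\Gamma$ is a Dynkin $\Z$-grading for a nilpotent element in $\mc O_\la$, arising from an $\sgl_2$-triple $\{e,h,f\}$. By Theorem~\ref{W:th:EK}, there is a pyramid $P$ of shape $\la$ with $\Gamma$ being $GL_N$-conjugate to $\Gamma^P$; up to such conjugation we may take $e=e^P=J_\la$ with the labelling of Section~\ref{W:subsec:bij}. By the rigidity of $\sgl_2$ (Lemma~\ref{W:rigid}) combined with the $U_e$-conjugation argument from the proof of Theorem~\ref{W:th:omega}, we may further conjugate within the centralizer of $e^P$ to assume $h$ is the block-diagonal standard element from Example~\ref{W:ex:regular}. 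Then the eigenvalues of $h$ on the $i$th Jordan block are $\la_i-1, \la_i-3, \ldots, 1-\la_i$; comparing with $h^P = -\text{diag}(\col_1, \ldots, \col_N)$ forces each row of $P$ to have column numbers symmetric about $0$, i.e., $P$ is symmetric.

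The main care needed is bookkeeping around labellings and conjugations: $\Gamma^P$ depends on a choice of labelling of $P$, while the symmetry of $P$ is manifestly labelling-independent, so one must check that different labellings of a symmetric pyramid yield $GL_N$-conjugate gradings, which is already built into Theorem~\ref{W:th:EK}. The most delicate point is ensuring that the conjugation reducing $e$ to $J_\la$ can be chosen so that $h$ simultaneously becomes the standard diagonal element; this is precisely where Lemma~\ref{W:rigid} and the transitivity of $U_e$ on the $\sgl_2$-completions of $e$ enter, providing the last bit of conjugation needed to align $h$ with $h^P$.
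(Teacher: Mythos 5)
The forward direction of your argument (a symmetric pyramid $P$ yields a Dynkin $\Gamma^P$, by assembling the block-diagonal $\sgl_2$-triple from the row-wise triples of Example~\ref{W:ex:regular}) is exactly the paper's argument and is correct.

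The converse has a genuine gap at the second conjugation step. After you normalize so that $e=e^P=J_\la$ and $\Gamma=\Gamma^P$, you know $h=h^P$ (both are traceless and give the same grading, so they agree). You then conjugate by some $g\in G_{e}$ to turn $h$ into the block-diagonal standard element $h_{\mathrm{std}}$, and compare $h_{\mathrm{std}}$ with $h^P$. But this conjugation does \emph{not} preserve the grading: after applying $\Ad g$, the grading is $\ad h_{\mathrm{std}}$, which is in general a different grading from $\Gamma^P=\ad h^P$ (they are merely $G_e$-conjugate). So writing ``$h$ and $h^P$ coincide'' after the conjugation is unjustified --- you have swapped the grading without tracking how $P$ changes. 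What you actually know is that $h^P$ and $h_{\mathrm{std}}$ are conjugate diagonal matrices, hence have the same eigenvalue multiset; deducing that every individual row of $P$ is centered requires a further combinatorial argument, or (cleaner) an appeal to the injectivity of the Theorem~\ref{W:th:EK} bijection: the new grading $\ad h_{\mathrm{std}}$ corresponds under Theorem~\ref{W:th:EK} to the symmetric pyramid, and since it is $GL_N$-conjugate to $\Gamma^P$, injectivity forces $P$ to be that symmetric pyramid. The paper avoids all of this by observing directly that any two Dynkin gradings for the same $e$ are $GL_N$-conjugate (from the proof of Theorem~\ref{W:th:omega}), and then simply restricting the bijection of Theorem~\ref{W:th:EK}: since the symmetric pyramid maps to a Dynkin grading and all Dynkin gradings for $\mc O_\la$ form a single conjugacy class, the symmetric pyramid is the unique preimage.
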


\begin{proof}
Let $P$ be a symmetric pyramid of size $N$. Then the good
$\Z$-grading $\Gamma^P$ of $\ga$ is Dynkin, since we can take
$\sgl_2$ to be the diagonal subalgebra of the direct sum of the
$\sgl_2$'s associated to each row of $P$ (which corresponds to a
Jordan block) as constructed in Example~\ref{W:ex:regular}.

Any two Dynkin gradings for a given nilpotent element $e$ are
conjugated by $GL_N$, as we have seen in the course of
establishing the bijection $\Omega$ in \ref{W:bijection}.
\end{proof}

\begin{remark} \label{W:othertype}
The classification of good $\Z$-gradings for other types
was also carried out in \cite{EK}. See Baur and Wallach \cite{BW}
for closely related classification of nice parabolic subalgebras and
further clarification.
\end{remark}

\section{$W$-algebras and independence of good gradings}
\label{W:sec:indep good}

In this section, we shall prove that the isoclasses of finite
$W$-algebras are independent of the choices of the good
$\R$-gradings $\Gamma$, following Brundan and Goodwin \cite{BG}.

\subsection{}
\label{W:subsec:Rgrade} In this subsection, we fix notations which
will be used throughout Section~\ref{W:sec:indep good}.

Let $e \in \ga$ be a nilpotent element. Fix an $\sgl_2$-triple
$\{e,h,f\}$ in $\ga$, and denote this copy of $\sgl_2$ by $\mf s$.

Recall that the $\ad h$-eigenspace decomposition of $\ga$ defines
the Dynkin $\Z$-grading
$$
\Gamma^\emptyset: \ga = \bigoplus_{j \in \Z} \ga_j ,
 \quad \text{ where } \ga_0
=\ga_h.
$$
Let $\c = \ga_h$ and let $C$ be the corresponding closed connected
subgroup of the adjoint group $G$. Also let $\mf r = \bigoplus_{j
> 0} \ga_j$ (associated to $\Gamma^\emptyset$) and let $R$ be the
corresponding closed connected subgroup of $G$. It is well known
that $C_e$ is a maximal reductive subgroup of $G_e$, with Lie
algebra $\c_e$, and that $R_e$ is the unipotent radical of $G_e$,
with Lie algebra $\mf r_e$.

Fix a maximal torus $T$ of $G$ contained in $C$ and containing a
maximal torus of $C_e$. An important role is played by the
centralizer $\t_e$ of $e$ in the Lie algebra $\t$ of $T$. It is a
Cartan subalgebra of the reductive part $\c_e$ of the centralizer
$\ga_e$.


%
%
\subsection{} \label{W:subsec:Rgrading}

An $\R$-grading
$$
\Gamma : \ga = \bigoplus_{j \in \R} \ga_j
$$
of $\ga$ is called a {\em good grading} (or {\em good
$\R$-grading}) for $e$ if the conditions
(\ref{W:deg=2})-(\ref{W:surj}) in \ref{W:Dynkin gr}
holds (where the indices are taken
in $\R$). Lemma~\ref{W:inj=sur} (i.e. $\eqref{W:inj}
\Leftrightarrow \eqref{W:surj}$) remains to be valid for
$\R$-gradings.

We can associate to any $\R$-grading a semisimple element
$h_\Gamma$, as in Lemma~\ref{W:grading}.
\begin{definition}   \label{W:def:compat}
We say that a grading $\Gamma: \ga = \bigoplus_{j \in \R} \ga_j$
is {\em compatible with $\t$} if $\t \subseteq \ga_0$, or
equivalently, if $h_\Gamma \in \t$.
\end{definition}

\begin{remark}  \label{W:h in t}
We have $h =h_{\Gamma^\emptyset} \in \t$ since $\t \subseteq \c$
implies $[h, \t]=0$ and $\t$ is maximal abelian. Hence, the Dynkin
grading $\Gamma^\emptyset$ is compatible with $\t$.
\end{remark}

\begin{lemma}  \label{W:compatExist}
Every $\R$-grading $\Gamma$ is $G$-conjugate to a $\R$-grading
that is compatible with $\t$.
\end{lemma}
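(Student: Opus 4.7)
The plan is to reduce the statement to the standard fact that Cartan subalgebras of $\ga$ are $G$-conjugate, by first extracting from $\Gamma$ a well-behaved semisimple element $h_\Gamma$ and then conjugating it into $\t$.

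First I would extend Lemma~\ref{W:grading} from the $\Z$-graded to the $\R$-graded setting. The degree operator $\partial: \ga \to \ga$ sending $x \in \ga_j$ to $jx$ is a well-defined derivation of $\ga$ (only finitely many $\ga_j$ are nonzero, since $\dim \ga < \infty$). Since $\partial$ restricts to an inner derivation of the semisimple algebra $[\ga,\ga]$ and automatically annihilates $\mathfrak z(\ga)$, there is a unique semisimple $h_\Gamma \in [\ga,\ga]$ with $\partial = \ad h_\Gamma$, and hence $\ga_j = \{x \in \ga \mid [h_\Gamma,x] = jx\}$ for every $j \in \R$.

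Next, since $h_\Gamma$ is semisimple, it is contained in some Cartan subalgebra $\t'$ of $\ga$; explicitly, one may take $\t'$ to be any maximal toral subalgebra of the reductive centralizer $\ga_{h_\Gamma}$. All Cartan subalgebras of a reductive complex Lie algebra are $G$-conjugate under the adjoint group, so there exists $g \in G$ with $\Ad g(\t') = \t$, and in particular $\Ad g(h_\Gamma) \in \t$. The conjugated grading $\Gamma^g$ defined by $(\Gamma^g)_j := \Ad g(\ga_j)$ is then an $\R$-grading of $\ga$ whose associated semisimple element is $\Ad g(h_\Gamma) \in \t$, so $\Gamma^g$ is compatible with $\t$ in the sense of Definition~\ref{W:def:compat}.

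The argument is a routine combination of two standard facts — a semisimple element lies in a Cartan subalgebra, and any two Cartan subalgebras of a reductive Lie algebra are $G$-conjugate — so there is no substantive obstacle. The only bookkeeping step is verifying that Lemma~\ref{W:grading} carries over verbatim to the $\R$-graded case, which it does since its proof used only the finiteness of the set of nonzero graded components.
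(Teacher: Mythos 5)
Your proof is correct and follows essentially the same route as the paper: extract the semisimple element $h_\Gamma$ via the $\R$-graded analogue of Lemma~\ref{W:grading} (which the paper also invokes in \S\ref{W:subsec:Rgrading}), then conjugate it into $\t$. You simply unpack the paper's one-line appeal to the fact that every semisimple element is $G$-conjugate into $\t$ into its two standard ingredients (a semisimple element lies in a Cartan subalgebra; Cartan subalgebras are $G$-conjugate).
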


\begin{proof}
Every semisimple element of $\ga$, in particular $h_\Gamma$, is
$G$-conjugate to an element of $\t$. The lemma follows.
\end{proof}

Let $E_e$ be the $\R$-form for $\t_e$ consisting of all $p \in
\t_e$ such that the eigenvalues of $\ad p$ on $\ga$ are real.
\begin{lemma}\label{W:choice}
\begin{enumerate}
\item Any good $\R$-grading for $e$ is $G_e$-conjugate to a good
$\R$-grading $\Gamma$ for $e$ compatible with $\t$.

\item
For such a $\Gamma$, we have that $h \in \ga_0, f \in \ga_{-2}$,
and $h_\Gamma = h + p$ for some point $p \in E_e$.
\end{enumerate}
\end{lemma}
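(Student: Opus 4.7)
The plan is to treat (1) and (2) in sequence, with (1) requiring essentially all the work and (2) then following by a direct computation using $h \in \t$ (Remark~\ref{W:h in t}).

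For (1), the strategy is to reduce, by two successive conjugations inside $G_e$, to the situation $h_\Gamma \in \t$. First I would invoke the $\R$-grading analogue of Lemma~\ref{W:lem:EK} (whose proof carries over verbatim to good $\R$-gradings, since all that is used is $e\in\ga_2$ together with \eqref{W:inj}-\eqref{W:surj}) to produce a $\Gamma$-graded $\sgl_2$-triple $\{e,h',f'\}$ with $h'\in\ga_0$ and $f'\in\ga_{-2}$. By the rigidity portion of the proof of Theorem~\ref{W:th:omega} (the $\sgl_2$-triples $\{e,h,f\}$ and $\{e,h',f'\}$ are $G_e$-conjugate), there is $g_0\in G_e$ with $\Ad(g_0)h'=h$; replacing $\Gamma$ by its $\Ad(g_0)$-translate we may assume from the start that $h\in\ga_0$, i.e. $[h_\Gamma,h]=0$. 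Now $h_\Gamma-h$ is a difference of two commuting semisimple elements, hence is semisimple; since $[h_\Gamma,e]=[h,e]=2e$ and $[h_\Gamma,h]=0$, it lies in $\ga_e\cap\ga_h=\c_e$. The reductive Lie algebra $\c_e$ has $\t_e$ as a Cartan subalgebra, and all Cartan subalgebras of $\c_e$ are $C_e$-conjugate, so there is $c\in C_e$ with $\Ad(c)(h_\Gamma-h)\in\t_e$. Since $c$ centralizes $h$, this yields $\Ad(c)h_\Gamma=h+\Ad(c)(h_\Gamma-h)\in h+\t_e\subseteq \t$, which is exactly compatibility of $\Ad(cg_0)\Gamma$ with $\t$.

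For (2), let $\Gamma$ be any good $\R$-grading with $h_\Gamma\in\t$. By Remark~\ref{W:h in t} we also have $h\in\t$, so $[h_\Gamma,h]=0$ automatically, giving $h\in\ga_0$. Setting $p:=h_\Gamma-h$, the identities $[p,e]=0$ and $[p,h]=0$ place $p$ in $\c_e$. To obtain $f\in\ga_{-2}$ it suffices to show $[p,f]=0$: view $\ga$ as an $\sgl_2$-module under $\{e,h,f\}$; then $p\in\ga_e$ is a highest-weight vector, and $[p,h]=0$ forces this highest weight to be $0$, so $p$ generates the trivial $\sgl_2$-submodule and in particular $[f,p]=0$. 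Hence $[h_\Gamma,f]=[h,f]+[p,f]=-2f$. Finally, $p=h_\Gamma-h\in\t\cap\ga_e=\t_e$, and because $\ad h_\Gamma$ and $\ad h$ are commuting semisimple operators with real eigenvalues (the former since $\Gamma$ is an $\R$-grading, the latter by $\sgl_2$-theory), their difference $\ad p$ has real eigenvalues as well, so $p\in E_e$.

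The substantive step is the middle move in (1): obtaining $h\in\ga_0$ via $\sgl_2$-rigidity. Once that is in place, the rest of (1) is the standard reduction ``semisimple element of a reductive Lie algebra lies in some Cartan subalgebra, all of which are conjugate,'' and (2) is then essentially a one-line consequence of $h,h_\Gamma\in\t$ plus the $\sgl_2$-module structure. The only subtlety worth double-checking is that Lemma~\ref{W:lem:EK} does transfer to the $\R$-graded setting, but its proof only uses the grading through the eigenspace decomposition of $\ad h_\Gamma$ (with $\C$-coefficients), so no modification is needed.
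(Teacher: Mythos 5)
Your argument is correct and follows the same route as the paper's: reduce by $G_e$-conjugation (using the $\R$-grading version of Lemma~\ref{W:lem:EK} and $\sgl_2$-rigidity) to the case where $\{e,h,f\}$ is $\Gamma$-graded, then conjugate the semisimple element $h_\Gamma - h \in \c_e$ into $\t_e$ by an element of $C_e$, which fixes $h$, to land $h_\Gamma$ in $\t$; for (2) use $h,h_\Gamma\in\t$ to get $[p,e]=[p,h]=0$, deduce $[p,f]=0$, and read off $p\in E_e$ from the reality of eigenvalues. You make explicit a few steps the paper leaves implicit — the semisimplicity of $h_\Gamma-h$, the fact that $p'\in\ga_h$ as well as $\ga_e$, and the $\sgl_2$-module reason why $[p,e]=[p,h]=0$ forces $[p,f]=0$ (the paper could equally cite Lemma~\ref{W:rigid} here) — but the structure of the proof is the same.
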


\begin{proof}
(1) Recall that any two $\sgl_2$-triples in $\ga$ sharing the same
$e$ are conjugate by $G_e$, as seen from the proof of
Theorem~\ref{W:th:omega}. By Lemma~\ref{W:lem:EK}, we may assume that
the $\sgl_2$-triple $\mf s =\{e,h,f\}$ we start with is
$\Gamma^\emptyset$-graded, that is, $h \in \ga_0$ and $f \in
\ga_{-2}$.

Take the semisimple element $h_{\Gamma^\emptyset} \in \ga_0
=\ga_h$ as in Lemma~\ref{W:grading}. Let $p' =h_{\Gamma^\emptyset}
-h$, which is a semisimple element lying in $\c_e$ (which is the
centralizer of $h$ and $e$), since $[p',e]=[h_{\Gamma^\emptyset},
e] -[h,e] =2e-2e =0$. Then $p'$ is $C_e$-conjugate to an element
in $\t_e$, a Cartan subalgebra of $\c_e$. Write $\text{Ad} g (p')
=p \in \t_e$ for $g \in C_e \subseteq G_e$. Recall $h \in \t$ from
Remark~\ref{W:h in t}. Hence, $\Ad g(h_{\Gamma^\emptyset}) =\Ad
g(h + p') =h + p \in \t$, and the good grading $\Gamma :=\Ad g
({\Gamma^\emptyset})$ for $e$ is compatible with $\t$.

(2) For such a $\Gamma$ as in (1), we have $h \in \ga_0$ since $\t
\subseteq \ga_0$ and $h \in \t$. Then $p=h_\Gamma -h$ centralizes
$e$ and $h$, hence also $f$, so $[h_\Gamma,f]=[h,f]=-2f$ and $f
\in \ga_{-2}$. Finally, $p$ belongs to $\t_e$, hence to $E_e$,
since $\Gamma$ is an $\R$-grading.
\end{proof}
\subsection{}
\label{W:subsec:polytope}

For $p \in E_e$, we let $\Gamma^p$ denote the $\R$-grading of
$\ga$ defined by $\ad (h+p)$ (which is consistent with the
notation for the Dynkin grading $\Gamma^\emptyset$). Such a
grading $\Gamma^p$ is automatically compatible with $\t$, but it
may not be a good grading.

Note that $\t_e$ commutes with $h$ and $e$, and hence with $\s$.
For $\alpha \in \t_e^*$ and $i \geq 0$, let $L(\alpha,i)$ denote
the irreducible $\t_e \oplus \s$-module of dimension $(i+1)$ on
which $\t_e$ acts by weight $\alpha$. Decompose $\ga$ as an
$\t_e\oplus \s $-module
$$
\ga \cong \bigoplus_{\alpha \in \Phi_e \cup\{0\}} \bigoplus_{i
\geq 0} m(\alpha,i) L(\alpha,i)
$$
where $\Phi_e$ denotes the set of nonzero weights of $\t_e$ (or
$E_e$) of $\ga$.
One can show that $\Phi_e$ is a so-called restricted root system
on $E_e$ (cf. \cite{BG}), but for our purpose we only need to
record two simple facts: $\alpha \in \Phi_e$ implies $\pm \alpha
\in \Phi_e$ and $\Phi_e$ spans $E_e^*$.
For $\alpha \in \Phi_e$, let
$$
d(\alpha) = 1+\min\{i \geq 0 \mid m(\alpha,i)
\neq 0\}
$$
be the minimal dimension of a simple $\mf s$-submodule of the
$\alpha$-weight space of $\ga$.

\begin{proposition}\label{W:ggp}
Let $p \in E_e$. The grading $\Gamma^p$ is a good grading for $e$
if and only if $|\alpha(p)| < d(\alpha)$ for all $\alpha \in
\Phi_e$.
\end{proposition}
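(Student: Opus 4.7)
The plan is to reduce the good-grading criterion to a weight inequality on the $\t_e \oplus \s$-isotypic decomposition of $\ga$, where everything can be read off from the $\sgl_2$-module structure.

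First I would dispense with condition \eqref{W:deg=2}: since $p \in \t_e$ commutes with $e$, we have $[h+p,e] = 2e$, so $e \in \ga_2^p$ for every $p \in E_e$, and moreover $h + p = h_{\Gamma^p} \in \t$, so $\Gamma^p$ is already compatible with $\t$. By Lemma~\ref{W:inj=sur} (whose proof is based on the pairing \eqref{W:pairing}, and extends verbatim to $\R$-gradings via Lemma~\ref{W:grading}), the surviving content of \eqref{W:inj}--\eqref{W:surj} reduces to injectivity of $\ad e: \ga_j^p \to \ga_{j+2}^p$ for all $j \le -1$, i.e.\ to the single requirement
\[
\ga_e \cap \ga_{<0}^p = 0.
\]

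The main step is to read this off the isotypic decomposition $\ga \cong \bigoplus_{\alpha, i} m(\alpha, i)\, L(\alpha, i)$. Because $p \in \t_e$ commutes with $\s$, each $L(\alpha,i)$ is preserved and $\ad p$ acts on it by the scalar $\alpha(p)$. On $L(\alpha,i)$ the operator $\ad h$ has weights $i, i-2, \ldots, -i$, so $\ad(h+p)$ has weights $\alpha(p)+i, \alpha(p)+i-2, \ldots, \alpha(p)-i$, and $\ker(\ad e|_{L(\alpha,i)})$ is the one-dimensional highest-weight line, sitting in $\Gamma^p$-degree $\alpha(p)+i$. Summing over isotypic components, $\ga_e$ is a direct sum of lines of $\Gamma^p$-degrees $\alpha(p)+i$, one for each $(\alpha,i)$ with $m(\alpha,i) \neq 0$, and the goodness criterion becomes: $\alpha(p) + i \ge 0$ whenever $m(\alpha,i) \neq 0$. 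The $\alpha = 0$ case is automatic, and for each $\alpha \in \Phi_e$ the binding constraint is the one coming from $i = d(\alpha)-1$, giving $\alpha(p) \ge -(d(\alpha)-1)$.

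To conclude I would apply this inequality to both $\alpha$ and $-\alpha$, using the symmetry $d(-\alpha) = d(\alpha)$: the non-degenerate invariant form $(\cdot\,|\,\cdot)$ restricts to a non-degenerate pairing between $\ga^{(\alpha)}$ and $\ga^{(-\alpha)}$ and is $\s$-invariant, so identifies $\ga^{(-\alpha)}$ with the $\s$-dual of $\ga^{(\alpha)}$; self-duality of $\sgl_2$-irreducibles then yields $m(-\alpha,i) = m(\alpha,i)$ and hence $d(-\alpha) = d(\alpha)$. Combining the two inequalities gives $|\alpha(p)| \le d(\alpha)-1 < d(\alpha)$, and the entire chain of implications is reversible, proving the proposition. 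The only step that is not purely formal is the symmetry $d(-\alpha) = d(\alpha)$; everything else is a direct translation of the good-grading conditions into the language of the $\t_e \oplus \s$-isotypic decomposition, and I anticipate no further obstacle.
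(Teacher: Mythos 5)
Your overall strategy is the same as the paper's: pass to the $\t_e\oplus\s$-isotypic decomposition $\ga\cong\bigoplus m(\alpha,i)L(\alpha,i)$, observe that $\ker(\ad e)$ in $L(\alpha,i)$ is the highest-weight line of $\Gamma^p$-degree $\alpha(p)+i$, and translate the good-grading condition into inequalities on $\alpha(p)$. You also supply a detail that the paper elides: the symmetry $d(-\alpha)=d(\alpha)$, via $\ad$-invariance of the form and self-duality of $\sgl_2$-irreducibles. That observation is correct and genuinely needed to pass from $\alpha(p)>-d(\alpha)$ to $|\alpha(p)|<d(\alpha)$.

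However there is a real gap in your reduction step, and it propagates to an incorrect final inequality. You write that injectivity of $\ad e:\ga_j^p\to\ga_{j+2}^p$ for all $j\le-1$ is equivalent to $\ga_e\cap\ga_{<0}^p=0$. For a $\Z$-grading these are the same set, but the proposition lives in the $\R$-grading setting, where $\ga_{<0}^p\supsetneq\ga_{\le-1}^p$: there can be nonzero $\ga_j^p$ with $-1<j<0$. The correct translation of \eqref{W:inj} is $\ga_e\cap\ga_{\le -1}^p=0$, i.e.\ $\ga_e\subseteq\bigoplus_{j>-1}\ga_j^p$. This is exactly what the paper uses. Your stronger condition then turns into $\alpha(p)+i\ge 0$ rather than the correct $\alpha(p)+i>-1$, and hence into $|\alpha(p)|\le d(\alpha)-1$ rather than $|\alpha(p)|<d(\alpha)$. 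Your final sentence — ``the entire chain of implications is reversible'' — is then false: for real $\alpha(p)$, the inequality $|\alpha(p)|<d(\alpha)$ does \emph{not} imply $|\alpha(p)|\le d(\alpha)-1$. Concretely, a $p$ with some $\alpha(p)=d(\alpha)-\tfrac12$ gives a good $\R$-grading (highest-weight line of $L(-\alpha,d(\alpha)-1)$ sits in degree $-\tfrac12>-1$), but your criterion would reject it. Since the open convex polytope $\mathscr P_e$ is built precisely out of these non-integral good gradings, the distinction matters. The fix is local: replace $\ga_{<0}^p$ by $\ga_{\le-1}^p$, and $\alpha(p)+i\ge0$ by $\alpha(p)+i>-1$; the rest of your argument then goes through and matches the paper.
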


\begin{proof}
Recall that $\Gamma^p:\ga = \bigoplus_{j \in \Z} \ga_j$ is a good
grading for $e$ if and only if $\ga_e \subseteq \bigoplus_{j > -1}
\ga_j$. Note that $h+p$ acts on the highest weight vector of
$L(\alpha,i)$ as the scalar $\alpha(p)+i$ for $\alpha \in \Phi_e$.
So, $\Gamma^p$ is a good grading for $e$ if and only if
$\alpha(p)+i > -1$ for all $\alpha \in \Phi_e$; the latter (by
considering $\pm \alpha \in \Phi_e$ simultaneously) is equivalent
to $|\alpha(p)| < 1+i$ for all $\alpha \in \Phi_e$.
\end{proof}

The {\em good grading polytope} for $e$ is defined to be
$$\mathscr P_e
 =\{p \in E_e \mid \Gamma^p \text{ is a good } \R\text{-grading for }e\}.
$$
Since $\Phi_e$ spans $E_e^*$,  $\mathscr P_e$ is an open convex
polytope in $E_e$ by Proposition~\ref{W:ggp}: indeed, $|\alpha (t
p_1 +(1-t) p_2)| \leq t|\alpha (p_1)| +(1-t) |\alpha (p_2)| <
d(\alpha)$, for $p_1, p_2 \in \mathscr P_e$ and $0\le t \le 1$.

\subsection{}
\label{W:subsec:adj}

Two good $\R$-gradings $\Gamma:\ga = \bigoplus_{i \in \R} \ga_i$
and $\Gamma':\ga = \bigoplus_{j \in \R} \ga_j'$ are {\em adjacent}
if
$$
\ga = \bigoplus_{i^- \leq j \leq i^+} \ga_i \cap \ga_j',
$$
where $i^-$ denotes the largest integer strictly smaller than $i$
and $i^+$ denotes the smallest integer strictly greater than $i$.

\begin{lemma} \label{W:sameinterval}
The following conditions are equivalent for $i,j \in \R$:
\begin{enumerate}
\item $i^- \leq j \leq i^+$;

\item $i$ and $j$ lie in the same unit interval $[a, a+1]$ for
some $a\in \Z$;

\item $j^- \leq i \leq j^+$.
\end{enumerate}
\end{lemma}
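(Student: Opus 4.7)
The plan is to establish $(1)\Leftrightarrow(2)$ directly from the definitions of $i^-$ and $i^+$, and then to observe that condition $(2)$ is manifestly symmetric in $i$ and $j$, so that applying the same equivalence with the roles of $i$ and $j$ swapped yields $(3)\Leftrightarrow(2)$. This gives the cyclic equivalence of all three conditions.

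For $(1)\Rightarrow(2)$ I would split on whether $i\in\Z$. If $i\in\Z$, then by definition $i^-=i-1$ and $i^+=i+1$, so condition $(1)$ places $j$ in $[i-1,i+1]$, which is the union of the two unit intervals $[i-1,i]$ and $[i,i+1]$; in either case $j$ lies in a unit interval $[a,a+1]$ that also contains $i$ (take $a=i-1$ or $a=i$). If $i\notin\Z$, then $i^-=\lfloor i\rfloor$ and $i^+=\lceil i\rceil=\lfloor i\rfloor+1$ are consecutive integers, so $(1)$ forces $j\in[\lfloor i\rfloor,\lfloor i\rfloor+1]$, which is the unique unit interval containing $i$ in its interior; take $a=\lfloor i\rfloor$.

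For the converse $(2)\Rightarrow(1)$, suppose $i,j\in[a,a+1]$ for some $a\in\Z$. If $i\in\{a,a+1\}$ (so $i\in\Z$), then $[a,a+1]\subseteq[i-1,i+1]=[i^-,i^+]$, whence $(1)$. Otherwise $a<i<a+1$, so by definition $a=i^-$ and $a+1=i^+$, and since $j\in[a,a+1]$ we again get $(1)$. Finally, as $(2)$ is symmetric in $i$ and $j$, the same case analysis with the roles swapped yields $(3)\Leftrightarrow(2)$.

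No genuine obstacle is expected: the lemma is purely a bookkeeping statement unpacking the definitions of $i^\pm$, with the only minor care needed being the separate treatment of integer and non-integer values of $i$ (and symmetrically $j$). The substantive content will be used subsequently to make sense of the notion of adjacency of good $\R$-gradings.
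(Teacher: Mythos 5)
Your proof is correct and complete. The paper itself leaves this lemma as Exercise~\ref{W:xca:adj} with no proof given, so there is nothing to compare against; your case analysis on whether $i$ is an integer, together with the observation that condition~(2) is symmetric in $i$ and $j$, is exactly the straightforward bookkeeping argument the exercise expects.
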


\begin{xca} \label{W:xca:adj}
Prove Lemma~\ref{W:sameinterval}.
\end{xca}

The next lemma allows us to reduce the study of adjacent good
$\R$-gradings to the good grading polytope $\mathscr P_e$.

\begin{lemma}\label{W:doublegr}
Every pair of adjacent good $\R$-gradings for $e$ is
$G_e$-conjugate to a pair of adjacent good $\R$-gradings for $e$
compatible with $\t$.
\end{lemma}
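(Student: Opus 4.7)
The plan is to first produce an $\sgl_2$-triple $\{e,\tilde h,\tilde f\}$ through $e$ that is graded simultaneously for both $\Gamma$ and $\Gamma'$, and then to transfer the pair $h_\Gamma$, $h_{\Gamma'}$ into the distinguished Cartan $\t$ by exploiting the reductive structure of the centralizer $\c_e$. The initial observation is that $[h_\Gamma,h_{\Gamma'}]=0$: adjacency together with Lemma~\ref{W:sameinterval} provides the bigraded decomposition $\ga=\bigoplus_{(i,j)}\ga_i\cap\ga'_j$, on which $\ad h_\Gamma$ and $\ad h_{\Gamma'}$ act by the scalars $i$ and $j$ respectively and hence commute as operators; since both elements lie in $[\ga,\ga]$ (Lemma~\ref{W:grading}) where $\ad$ is injective, they themselves commute.

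To produce the simultaneously graded $\sgl_2$-triple I iterate the construction of Lemma~\ref{W:lem:EK}. The first application (to $\Gamma$) yields an $\sgl_2$-triple $\{e,h_2,f_2\}$ with $h_2\in\ga_0$ and $f_2\in\ga_{-2}$. A second application to $\{e,h_2,f_2\}$ and the grading $\Gamma'$ produces $\tilde h$, the $\Gamma'$-degree-$0$ component of $h_2$, and $\tilde f$, the $\ad\tilde h$-eigenvalue $-2$ component of the $\Gamma'$-degree-$(-2)$ projection of $f_2$; Lemma~\ref{W:lem:EK} guarantees that $\{e,\tilde h,\tilde f\}$ is an $\sgl_2$-triple with $\tilde h\in\ga'_0$ and $\tilde f\in\ga'_{-2}$. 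Since $h_2$ already lies in $\ga_0$, taking its $\Gamma'$-degree-$0$ component keeps $\tilde h\in\ga_0$, so $\tilde h\in\ga_0\cap\ga'_0$; and since $f_2\in\ga_{-2}$ and the two operations producing $\tilde f$ from $f_2$ both preserve the $\Gamma$-grading (projection onto a $\Gamma'$-degree is $\Gamma$-equivariant, and $\tilde h\in\ga_0$ makes $\ad\tilde h$ preserve $\Gamma$-degree), we obtain $\tilde f\in\ga_{-2}\cap\ga'_{-2}$.

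Set $p_\Gamma:=h_\Gamma-\tilde h$ and $p_{\Gamma'}:=h_{\Gamma'}-\tilde h$. Each is a difference of commuting semisimples (since $\tilde h\in\ga_0\cap\ga'_0$ commutes with both $h_\Gamma$ and $h_{\Gamma'}$), hence semisimple; each centralizes $\{e,\tilde h\}$ because $[h_\Gamma,e]=2e=[\tilde h,e]$ and the analogous identity holds for $\Gamma'$; and $p_\Gamma, p_{\Gamma'}$ commute with each other. Thus they both lie in the reductive Lie algebra $\tilde{\mathfrak{c}}_e:=\ga_{\tilde h}\cap\ga_e$, the centralizer of the $\sgl_2$ generated by $\{e,\tilde h,\tilde f\}$. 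By the rigidity argument in the proof of Theorem~\ref{W:th:omega}, there exists $u\in U_e\subseteq G_e$ with $\Ad u(h)=\tilde h$, and $\Ad u^{-1}$ carries $\tilde{\mathfrak{c}}_e$ isomorphically onto $\c_e$, sending $p_\Gamma,p_{\Gamma'}$ to commuting semisimples $q_\Gamma,q_{\Gamma'}\in\c_e$. All Cartan subalgebras of the reductive $\c_e$ are $C_e$-conjugate and $\t_e\subseteq\c_e$ is one such, so some $c\in C_e$ satisfies $\Ad c(q_\Gamma),\Ad c(q_{\Gamma'})\in\t_e$ (both automatically in $E_e$ since their $\ad$-spectra on $\ga$ are real, being differences of real spectra of commuting diagonalizable operators). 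Setting $g:=cu^{-1}\in G_e$ and using $\Ad c(h)=h$, one computes $\Ad g(h_\Gamma)=h+\Ad c(q_\Gamma)\in\t$ and likewise $\Ad g(h_{\Gamma'})\in\t$, so both $\Ad g\cdot\Gamma$ and $\Ad g\cdot\Gamma'$ are compatible with $\t$, and the bigraded decomposition is carried equivariantly by $\Ad g$, preserving adjacency. The main technical point I expect in Step~2 is verifying that the iterated application of Lemma~\ref{W:lem:EK} preserves the $\Gamma$-grading properties established in the first pass, which comes down to tracking the explicit projection formulas in that lemma's proof.
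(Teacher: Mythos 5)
Your proof is correct and fills in precisely the three steps of the paper's sketch: (a) the commutativity $[h_\Gamma, h_{\Gamma'}]=0$ via the bigraded decomposition, (b) the doubly graded $\sgl_2$-triple obtained by iterating the construction in Lemma~\ref{W:lem:EK} (using adjacency to see that $\ga_0$ and $\ga_{-2}$ are themselves $\Gamma'$-graded), and (c) the ``double'' version of Lemma~\ref{W:choice}~(1), conjugating $\tilde h$ back to $h$ by $U_e$ and then simultaneously conjugating the two commuting semisimple elements $q_\Gamma, q_{\Gamma'}\in\c_e$ into $\t_e$ by $C_e$. This is essentially the same approach as the paper.
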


\begin{proof}[Sketch of a proof]
For adjacent gradings $\Gamma$ and $\Gamma'$, $\ad h_\Gamma$ and
$\ad h_{\Gamma'}$ are by definition simultaneously diagonalizable,
and hence $[h_\Gamma, h_{\Gamma'}] =0$.

We can refine Lemma~\ref{W:lem:EK} with essentially the same proof
to find an $\sgl_2$-triple $\mf s =\{e,h,f\}$ which is doubly
$\Gamma$-graded and $\Gamma'$-graded, that is, $h \in \ga_0 \cap
\ga_0'$ and $f \in \ga_{-2} \cap \ga_{-2}'$.

The lemma can now be viewed as a ``double" analogue of
Lemma~\ref{W:choice} ~(1), and it can be proved by refining the
arguments therein with the help of the doubly graded
$\sgl_2$-triple above.
\end{proof}

\begin{xca} \label{W:xca:double}
Fill in the details of the proof of Lemma~\ref{W:doublegr}.
\end{xca}

Consider the hyperplanes
$$
 H_{\alpha, k} =\{p \in E_e \mid \alpha (p) =k \},
 \quad \alpha \in \Phi_e, k \in \Z.
$$
The connected components of $E_e \backslash \bigcup_{\alpha, k}
H_{\alpha, k}$ will be referred to as (open) {\em alcoves}.

\begin{lemma}\label{W:adj}
Let $p,p' \in \mathscr P_e$. Then, $\Gamma^p$ and $\Gamma^{p'}$
are adjacent if and only if $p$ and $p'$ belong to the closure of
the same alcove.
\end{lemma}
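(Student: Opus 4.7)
The plan is to translate both sides of the claimed equivalence into one and the same condition on the numbers $\alpha(p), \alpha(p')$ for $\alpha \in \Phi_e$, using the $\t_e \oplus \s$-decomposition of $\ga$ introduced in Section~\ref{W:subsec:polytope}.

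Concretely, a nonzero vector $v$ belonging to the irreducible summand $L(\alpha,i)$ and having $\ad h$-eigenvalue $j_0 \in \{-i,-i+2,\dots,i\}\subset\Z$ satisfies
\[
[h+p,v]=(\alpha(p)+j_0)v, \qquad [h+p',v]=(\alpha(p')+j_0)v,
\]
so $v$ has $\Gamma^p$-degree $\alpha(p)+j_0$ and $\Gamma^{p'}$-degree $\alpha(p')+j_0$. Since the simultaneous $\Gamma^p$- and $\Gamma^{p'}$-eigenspaces of $\ga$ are spanned by such vectors, by Lemma~\ref{W:sameinterval} the pair $(\Gamma^p,\Gamma^{p'})$ is adjacent if and only if for every $\alpha\in\Phi_e\cup\{0\}$ that appears in the decomposition and every admissible $j_0$, the two real numbers $\alpha(p)+j_0$ and $\alpha(p')+j_0$ lie in a common closed unit interval $[a,a+1]$ with $a\in\Z$. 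Because $j_0\in\Z$, this reduces (after replacing $a$ by $a-j_0$) to the following condition, which is also manifestly trivial for $\alpha=0$:
\[
(\ast)\quad \text{for every }\alpha\in\Phi_e,\ \alpha(p)\text{ and }\alpha(p')\text{ lie in a common }[k,k+1]\text{ with }k\in\Z.
\]

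Next I will show that membership of $p,p'$ in the closure of a common alcove is equivalent to the same condition $(\ast)$. By definition, the alcoves are the connected components of $E_e\setminus\bigcup_{\alpha,k}H_{\alpha,k}$, so $p$ and $p'$ belong to the closure of a common alcove iff no open hyperplane $H_{\alpha,k}$ \emph{strictly} separates them, i.e.\ for every $\alpha\in\Phi_e$ and every $k\in\Z$ the value $k$ does not lie strictly between $\alpha(p)$ and $\alpha(p')$. Equivalently, the open interval with endpoints $\alpha(p),\alpha(p')$ contains no integer, which is exactly condition $(\ast)$ (the joining line segment from $p$ to $p'$ is convex, and one just checks each restricted root separately). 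Combining this with the previous paragraph finishes the proof.

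The only point that requires some care is the boundary behavior, when $\alpha(p)$ or $\alpha(p')$ happens to equal an integer $k$: then $p$ (or $p'$) lies on the wall $H_{\alpha,k}$ itself, and I need condition $(\ast)$ to record faithfully that such a point lies simultaneously in the closures of the alcoves on both sides of $H_{\alpha,k}$. This is automatic since in that case $\alpha(p)\in[k-1,k]\cap[k,k+1]$, and indeed condition $(\ast)$ is satisfied vacuously for any $\alpha(p')$ at distance less than $1$ from $k$ with no other integer in between; this is the main (minor) bookkeeping obstacle, handled as above.
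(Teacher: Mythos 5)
Your proof is correct and follows essentially the same route as the paper: in both cases one spans $\ga$ by joint eigenvectors for $\ad(h+p)$ and $\ad(h+p')$, observes that on each such eigenvector the $\Gamma^p$-degree is $\alpha(p)$ shifted by an integer, and invokes Lemma~\ref{W:sameinterval} to translate both adjacency and the alcove condition into the statement that $\alpha(p)$ and $\alpha(p')$ lie in a common closed unit interval for every $\alpha \in \Phi_e$. The only cosmetic difference is that you use the $\t_e\oplus\s$-isotypic decomposition into $L(\alpha,i)$ (noting the $\ad h$-eigenvalue $j_0\in\Z$), whereas the paper works with the $\t$-root spaces $\ga^\alpha$ directly (noting $\alpha(h)\in\Z$); these are interchangeable, and your bookkeeping at the walls is no less careful than the paper's.
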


\begin{proof}
By definition, $\ga_\alpha (\forall \alpha \in \Phi)$ lies in the
degree $\alpha(h + p)$ subspace of $\ga$ with respect to
$\Gamma^{p}$, and a similar remark applies to $\Gamma^{p'}$. Note
that $\alpha(h) \in \Z$. It follows that $\Gamma^{p}$ and
$\Gamma^{p'}$ are adjacent if and only if $\alpha(p)^- \leq
\alpha(p') \leq \alpha(p)^+, \forall \alpha \in \Phi$, and by
Lemma~\ref{W:sameinterval}~(2), if and only if $p$ and $p'$ belong
to the closure of the same alcove.
\end{proof}

\begin{proposition}  \label{W:n gradings}
Given any two good gradings $\Gamma$ and $\Gamma'$ for $e$, there
exists a chain $\Gamma  =\Gamma_0, \Gamma_1, \dots, \Gamma_n
=\Gamma'$ of good gradings for $e$ such that $\Gamma_{i}$ is
adjacent to $\Gamma_{i+1}$ for each $i=0,\ldots,n-1$.
\end{proposition}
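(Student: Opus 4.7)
The plan is to reduce to the case where both $\Gamma$ and $\Gamma'$ are compatible with $\t$, so that $\Gamma = \Gamma^p$ and $\Gamma' = \Gamma^{p'}$ for some $p, p' \in \mathscr P_e$, and then to exploit the convex geometry of the good grading polytope together with the alcove description of adjacency from Lemma~\ref{W:adj}.

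By Lemma~\ref{W:choice}(1) there exist $g_1, g_2 \in G_e$ with $g_1 \cdot \Gamma$ and $g_2 \cdot \Gamma'$ compatible with $\t$. I first claim that $\Gamma$ can be linked to $g_1 \cdot \Gamma$ by a chain of adjacent good gradings (similarly for $\Gamma'$). Choosing a smooth path $g(t)$ from $1$ to $g_1$ in the identity component of $G_e$, the family $\Gamma_t := g(t) \cdot \Gamma$ is a continuous family of good gradings whose defining elements $h_{\Gamma_t} = \Ad(g(t))(h_\Gamma)$ vary analytically in $t$. For each root $\alpha$ of a Cartan subalgebra of $\ga$ containing $h_\Gamma$, the function $t \mapsto \alpha(h_{\Gamma_t})$ is real-analytic, so it takes each integer value either identically or at finitely many isolated points. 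Collecting these finitely many integer-crossings $0 < \tau_1 < \cdots < \tau_M < 1$ and inserting an arbitrary interior point from each subinterval between consecutive crossings produces a finite chain: within an open subinterval no degree crosses an integer, so the degree of every fixed root space stays strictly inside a single open unit interval; at a crossing, the value sits on the common endpoint of two such unit intervals, so Lemma~\ref{W:sameinterval} forces adjacency with either neighbor. If $g_1$ lies outside the identity component of $G_e$, the finite component group requires some additional case-by-case handling.

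For the main step, assume now $\Gamma = \Gamma^p$ and $\Gamma' = \Gamma^{p'}$ with $p, p' \in \mathscr P_e$. Since $\mathscr P_e$ is a convex open polytope by Proposition~\ref{W:ggp}, the segment $\ell(t) = (1-t)p + tp'$ stays inside $\mathscr P_e$ for all $t \in [0,1]$. Only finitely many hyperplanes $H_{\alpha, k}$ meet this bounded segment, at parameter values $0 < t_1 < \cdots < t_m < 1$. Choose interleaving parameters $s_0 = 0 < s_1 < \cdots < s_{2m} = 1$ with $s_{2j-1} = t_j$ and $s_{2j} \in (t_j, t_{j+1})$ (setting $t_{m+1} := 1$). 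Any two consecutive points $\ell(s_i), \ell(s_{i+1})$ lie in the closure of a single alcove, namely the alcove whose interior contains the open subsegment between them, so by Lemma~\ref{W:adj} the gradings $\Gamma^{\ell(s_i)}$ and $\Gamma^{\ell(s_{i+1})}$ are adjacent.

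Concatenating the chains from these two parts yields the desired chain from $\Gamma$ to $\Gamma'$. The main obstacle is the reduction step, since $G_e$-conjugation is not itself an adjacency operation; one must show that a continuous one-parameter family of good gradings can be discretized into a chain of adjacent ones, with particular care at integer crossings of root-space degrees and for the component group of $G_e$. By contrast, the convex-geometry argument inside the polytope is clean and essentially forced by Lemma~\ref{W:adj}.
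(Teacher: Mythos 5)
Your main step—passing to the polytope $\mathscr P_e$, taking the straight segment from $p$ to $p'$, and subdividing at the finitely many wall-crossings so that consecutive parameters lie in a common alcove closure—is exactly the argument the paper gives, and it is correct.

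The problem is your reduction step. You try to link $\Gamma$ to $g_1\cdot\Gamma$ by choosing a continuous (or analytic) path $g(t)$ in $G_e$ and analyzing when ``degrees of root spaces'' cross integers along the family $\Gamma_t = g(t)\cdot\Gamma$. This cannot work as stated, for a reason more fundamental than the component-group caveat you flag. Adjacency of two gradings $\Gamma_1$ and $\Gamma_2$, by Definition in \S\ref{W:subsec:adj}, requires that $\ga$ decompose as $\bigoplus \ga_{1,i}\cap\ga_{2,j}$, which is only possible when $\ad h_{\Gamma_1}$ and $\ad h_{\Gamma_2}$ commute (so their eigenspace decompositions are simultaneously refinable). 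Along a generic one-parameter path $g(t)$, the semisimple elements $h_{\Gamma_t} = \Ad(g(t))(h_\Gamma)$ do \emph{not} commute for distinct $t$; they live in different Cartan subalgebras, so there is no fixed set of roots $\alpha$ against which ``$\alpha(h_{\Gamma_t})$'' makes sense for all $t$, and no pair $\Gamma_s,\Gamma_{s'}$ on the path is in general adjacent no matter how close $s$ and $s'$ are. The integer-crossing analysis tracks eigen\emph{values} but not eigen\emph{spaces}, and adjacency is a condition on the latter. So the proposed discretization of the path does not produce a chain of adjacent good gradings; the reduction step has a genuine hole, not merely a component-group loose end.

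The paper avoids this by a structural reduction rather than a path argument: it invokes Lemma~\ref{W:doublegr}, which simultaneously $G_e$-conjugates a pair of gradings into a form compatible with a single fixed Cartan $\t$ (after which both grading operators lie in $\t$ and the polytope machinery applies). The key point is not to move $\Gamma$ continuously to $g_1\cdot\Gamma$, but to exploit the existence of a doubly graded $\sgl_2$-triple and the structure of $G_e = C_e\ltimes R_e$ to place both gradings into $\{\Gamma^p : p\in\mathscr P_e\}$ at once, so that all intermediate gradings used in the chain live over the same Cartan $\t$ and commutativity of the grading operators is automatic. If you want to salvage a path-type argument you would need to constrain the path so that all $h_{\Gamma_t}$ lie in a common Cartan — but at that point you have essentially rediscovered the polytope reduction, and the continuous path adds nothing.
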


\begin{proof}
A simultaneous conjugation preserves pairs of adjacent good
gradings. So by Lemma~\ref{W:doublegr}, we may assume that $\Gamma
=\Gamma^p$ and $\Gamma' =\Gamma^{q}$ are good gradings for $e$
compatible with $\t$, for some $p, q\in \mathscr P_e$. As a convex
polytope, $\overline{\mathscr P}_e$ is connected, and it follows
from Proposition~\ref{W:ggp} that the closure $\overline{\mathscr
P}_e$ is a union of the closures of finitely many alcoves. Now the
proposition follows from Lemma~\ref{W:adj}.
\end{proof}

\begin{lemma}\label{W:sameLag}
Let $\Gamma:\ga=\bigoplus_{i \in \R} \ga_i$ and
$\Gamma':\ga=\bigoplus_{j \in \R} \ga_j'$ be adjacent good
gradings for $e$. Then there exist Lagrangian subspaces $\lag$ of
$\ga_{-1}$ and $\lag'$ of $\ga_{-1}'$
such that
$$
\lag \bigoplus (\bigoplus_{i < -1} \ga_i) = \lag' \bigoplus
(\bigoplus_{j < -1} \ga_j').
$$
\end{lemma}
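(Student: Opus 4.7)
My plan is to construct Lagrangians $\lag \subseteq \ga_{-1}$ and $\lag' \subseteq \ga_{-1}'$ sharing a common ``middle piece''. By Lemma~\ref{W:doublegr}, I may $G_e$-conjugate and assume that $\Gamma$ and $\Gamma'$ are both compatible with a common Cartan $\t$, so that every root space is homogeneous for both gradings.

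The adjacency hypothesis refined by the $\t$-weight decomposition yields the direct sums
$$\ga_{-1} = V' \oplus U \oplus W, \qquad \ga_{-1}' = V \oplus U \oplus W', \qquad \bigoplus_{i<-1}\ga_i = C \oplus V, \qquad \bigoplus_{j<-1}\ga_j' = C \oplus V',$$
where $U = \ga_{-1} \cap \ga_{-1}'$, $V = \ga_{-1}' \cap \bigoplus_{i<-1}\ga_i$, $V' = \ga_{-1} \cap \bigoplus_{j<-1}\ga_j'$, $W$ and $W'$ are defined analogously with $>-1$ in place of $<-1$, and $C = (\bigoplus_{i<-1}\ga_i) \cap (\bigoplus_{j<-1}\ga_j')$.

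The key structural input is obtained by degree computations using $e \in \ga_2 \cap \ga_2'$ together with \eqref{W:pairing} applied to both $\Gamma$ and $\Gamma'$: each of $V', W$ is isotropic in $\ga_{-1}$ and each of $V, W'$ is isotropic in $\ga_{-1}'$; moreover $V', W \perp U$ in $\ga_{-1}$ and $V, W' \perp U$ in $\ga_{-1}'$ (in each instance one brackets, reads off the relevant graded degree, and observes that the $\ga_{-2}$-component vanishes because the degree is $\ne -2$). Consequently the skew form on $\ga_{-1}$ splits as an orthogonal direct sum $U \perp (V' \oplus W)$, and non-degeneracy of the full form on $\ga_{-1}$ forces non-degeneracy on each summand. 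In particular the restriction of the form to $U$ is symplectic, and $V', W$ are complementary Lagrangians in $V' \oplus W$, giving $\dim V' = \dim W$. The same holds for $\ga_{-1}' = U \perp (V \oplus W')$, with $\dim V = \dim W'$.

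Now pick any Lagrangian subspace $X \subseteq U$ for the symplectic form on $U$ and set $\lag := V' \oplus X$ and $\lag' := V \oplus X$. Isotropy of $\lag$ follows from the isotropy of $V'$ and of $X$ combined with $V' \perp U$; moreover $\dim\lag = \dim V' + \hf\dim U = \hf(\dim V' + \dim U + \dim W) = \hf\dim\ga_{-1}$, so $\lag$ is Lagrangian in $\ga_{-1}$. The same reasoning shows $\lag'$ is Lagrangian in $\ga_{-1}'$. The lemma then follows at once by direct comparison:
$$\lag \oplus \bigoplus_{i<-1}\ga_i = V' \oplus X \oplus C \oplus V = \lag' \oplus \bigoplus_{j<-1}\ga_j'.$$
The main technical step is the orthogonal splitting $\ga_{-1} = U \perp (V' \oplus W)$ forced by the adjacency condition; everything else is bookkeeping with the adjacency decomposition.
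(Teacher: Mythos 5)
Your proposal is correct and takes essentially the same route as the paper's proof: decompose $\ga_{-1}$ by adjacency into $V'\oplus U\oplus W$ with $U=\ga_{-1}\cap\ga_{-1}'$ (and symmetrically for $\ga_{-1}'$), observe that $U$ is $\langle\cdot,\cdot\rangle$-orthogonal to the complementary pieces so the form restricted to $U$ stays symplectic, pick a Lagrangian $X\subseteq U$, and set $\lag=V'\oplus X$, $\lag'=V\oplus X$.

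Two small remarks. First, the opening reduction via Lemma~\ref{W:doublegr} is unnecessary: the direct-sum refinement of $\ga_{-1}$, $\ga_{-1}'$, and of $\bigoplus_{i<-1}\ga_i$ into the pieces $U,V,V',W,W',C$ follows purely from the adjacency definition, and the isotropy/orthogonality computations use only the $\Gamma'$-homogeneous degree of $[x,y]$ together with \eqref{W:pairing} (applied to $\Gamma'$), neither of which requires $\t$-compatibility. You can delete that step. Second, the parts you add beyond the paper's terse sketch are genuinely useful: the explicit observation that $V'$ and $W$ are complementary Lagrangians inside $V'\oplus W$ (hence $\dim V'=\dim W$) is precisely what makes $\lag=V'\oplus X$ Lagrangian in all of $\ga_{-1}$ rather than merely isotropic, a point the paper leaves implicit; and the identification $\bigoplus_{i<-1}\ga_i=C\oplus V$ (and $\bigoplus_{j<-1}\ga_j'=C\oplus V'$) converts the paper's ``check the inclusion both ways'' into a one-line equality.
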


\begin{proof}
We have that
\begin{align*}
\ga_{-1} &= ((\ga_{-1}\cap \ga_{<-1}')\bigoplus (\ga_{-1} \cap
\ga_{>-1}'))
\bigoplus (\ga_{-1} \cap \ga_{-1}'),\\
\ga_{-1}' &= ((\ga_{<-1}\cap \ga_{-1}') \bigoplus (\ga_{>-1} \cap
\ga_{-1}')) \bigoplus (\ga_{-1} \cap \ga_{-1}').
\end{align*}
Note the last summand $\ga_{-1} \cap \ga_{-1}'$ is orthogonal to
the first two summands in either identity above with respect to
$\langle\cdot,\cdot\rangle =(e |[\cdot,\cdot])$. Hence
$\langle\cdot,\cdot\rangle$ regarded as a (skew-symmetric)
bilinear form on $\ga_{-1} \cap \ga_{-1}'$ remains to be
non-degenerate. Take a Lagrangian subspace $\mf k$ of $\ga_{-1}
\cap \ga_{-1}'$, and set $\lag = \mf k \oplus (\ga_{-1} \cap
\ga_{<-1}')$ and $\lag' = \mf k \oplus (\ga_{<-1} \cap
\ga_{-1}')$. The lemma follows by using the assumption of
adjacency to check the inclusion of subspaces $ \lag \oplus
(\oplus_{i < -1} \ga_i) \subseteq \lag' \oplus (\oplus_{j < -1}
\ga_j')$ and the opposite inclusion by symmetry.
\end{proof}

\begin{theorem}\label{W:th: W Gamma}  \cite{BG}
The finite $W$-algebras associated to any two good gradings
$\Gamma$ and $\Gamma'$ for $e$ are isomorphic.
\end{theorem}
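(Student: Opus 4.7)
The plan is to string together the three preceding results—Proposition~\ref{W:n gradings}, Lemma~\ref{W:sameLag}, and Theorem~\ref{W:th:W lag}—so that the theorem reduces to a transitivity argument along a chain of adjacent good gradings.

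First, I would use Proposition~\ref{W:n gradings} to reduce to the case of adjacent good gradings: any two good $\R$-gradings $\Gamma$ and $\Gamma'$ for $e$ are connected by a finite chain $\Gamma=\Gamma_0,\Gamma_1,\ldots,\Gamma_n=\Gamma'$ of pairwise adjacent good gradings, so it suffices to prove that $\mc W_{e,\Gamma_i}\cong \mc W_{e,\Gamma_{i+1}}$ for each $i$. We are thus reduced to showing the theorem for a single pair of adjacent good gradings $\Gamma:\ga=\oplus \ga_i$ and $\Gamma':\ga=\oplus \ga_j'$.

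Next, for such an adjacent pair, I would apply Lemma~\ref{W:sameLag} to choose Lagrangian subspaces $\lag\subseteq \ga_{-1}$ and $\lag'\subseteq \ga_{-1}'$ so that the two nilpotent subalgebras used in the $W$-algebra construction,
\begin{equation*}
\m_{\Gamma,\lag} := \lag \oplus (\oplus_{i<-1}\ga_i),\qquad
\m_{\Gamma',\lag'} := \lag' \oplus (\oplus_{j<-1}\ga_j'),
\end{equation*}
coincide as subspaces, and hence as Lie subalgebras, of $\ga$. Call this common subalgebra $\m$. Since $\chi\in \ga^*$ depends only on $e$ (not on the grading), and since the left ideal $I_\chi\subseteq U(\ga)$ and the Whittaker-style definition \eqref{W:def2b}, $\mc W_{e,\Gamma,\lag} = (U(\ga)/I_\chi)^{\ad \m}$, depend only on the pair $(\chi,\m)$, we obtain the literal equality
\begin{equation*}
\mc W_{e,\Gamma,\lag} \;=\; \mc W_{e,\Gamma',\lag'}
\end{equation*}
as subalgebras of $U(\ga)/I_\chi$.

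Finally, Theorem~\ref{W:th:W lag} asserts that for a fixed good grading the isomorphism class of the finite $W$-algebra is independent of the choice of isotropic (in particular, Lagrangian) subspace. Thus $\mc W_{e,\Gamma}$ is well-defined up to isomorphism, and the equality in the previous paragraph yields $\mc W_{e,\Gamma}\cong \mc W_{e,\Gamma'}$ for adjacent $\Gamma,\Gamma'$. Chaining along $\Gamma_0,\ldots,\Gamma_n$ completes the proof. The main obstacle in this strategy is not the final assembly but the inputs themselves: the crucial content lies in Lemma~\ref{W:sameLag} (arranging the two $\m$'s to agree on the nose), whose construction of a common Lagrangian $\mf k\subseteq \ga_{-1}\cap\ga_{-1}'$ is what makes the whole argument collapse to a tautology.
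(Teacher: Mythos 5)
Your proposal is correct and follows essentially the same route as the paper: reduce to adjacent good gradings via Proposition~\ref{W:n gradings}, use Lemma~\ref{W:sameLag} to choose Lagrangians making the two nilpotent subalgebras $\m$ literally equal (so the Whittaker-model definitions coincide on the nose), and invoke Theorem~\ref{W:th:W lag} to remove the dependence on the choice of Lagrangian within each fixed grading. Your closing observation—that the real content is concentrated in Lemma~\ref{W:sameLag}—is exactly the structure of the paper's proof.
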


\begin{proof}
In this proof, we shall write a finite $W$-algebra as $\mc
W_{\lag, \Gamma}$, indicating its dependence on a Lagrangian
$\lag$ and a good $\R$-grading $\Gamma$.

Fix a chain $\Gamma  =\Gamma_0, \Gamma_1, \dots, \Gamma_n
=\Gamma'$ of good gradings for $e$ such that $\Gamma_{i}$ is
adjacent to $\Gamma_{i+1}$ for each $i$, as in Proposition~\ref{W:n
gradings}.

By Lemma~\ref{W:sameLag}, we can suitably choose Lagrangians
$\lag_i$ and $\lag_i'$ with respect to the gradings $\Gamma_{i}$
and $\Gamma_{i+1}$, respectively, which result an equality of the
corresponding Lie subalgebras $\m_i =\m_{i+1}$. So by the
Whittaker model definition of $W$-algebras, we have $\mc
W_{\lag_i, \Gamma_i}=\mc W_{\lag_i', \Gamma_{i+1}}$. Accordingly
to Gan-Gunzburg's Theorem~\ref{W:th:W lag} (or rather its
generalization in the setting of $\R$-gradings), the algebras $\mc
W_{\lag, \Gamma}$ are isomorphic for a fixed grading $\Gamma$ and
different choices of isotropic subspaces $\lag \subseteq
\ga_{-1}.$ Hence, the theorem follows by composing a sequence of
algebra isomorphisms:
$$
\mc W_{\lag_1, \Gamma_1}=\mc W_{\lag_1', \Gamma_2} \cong \mc
W_{\lag_2, \Gamma_2}=\mc W_{\lag_2', \Gamma_{3}} \cong \cdots
\cong \mc W_{\lag_{n-1}, \Gamma_{n-1}}=\mc W_{\lag_{n-1}',
\Gamma'}.
$$

\end{proof}

\section{Higher level Schur duality}
\label{W:sec:duality}

In this section, we let $\ga =\gl_N$. We present a duality between
finite $W$-algebras and degenerate cyclotomic Hecke algebras, due
to Brundan and Kleshchev \cite{BK3}.
\subsection{Schur duality}
\label{W:subsec:schur}

Let $V =\C^N$. Then the tensor space $V^{\otimes d}$ is naturally
a $(U(\ga), S_d)$-bimodule, which will be expressed as
$$
U(\ga) \;  {\curvearrowright} \; V^{\otimes d} \;
 {\curvearrowleft} \; S_d.
$$
The celebrated Schur duality states that the images of $U(\ga)$
and $\C S_d$ form mutual centralizers in $\End (V^{\otimes d})$.
Since the $S_d$-module $V^{\otimes d}$ is semisimple, one further
obtains a multiplicity-free decomposition of $V^{\otimes d}$ as a
$(U(\ga), S_d)$-bimodule.

\subsection{A duality of graded algebras}
\label{W:subsec:vust}

Let $e \in \ga$ be nilpotent, and let $\chi \in \ga^*$ be
associated to $e$ as before by $(\cdot|\cdot)$. Assume the Jordan
canonical form of $e$ is given by a partition $\la$ of $N$. We
choose the (even) pyramid to be the Young diagram $\la$ in the
French fashion, labelled by $\{1, \ldots, N\}$ down the columns
from left to right.

Denote by $\ell =\la_1$. Then $e^\ell =0$.

Denote by $\overline{H}_d := \C [x_1, \ldots, x_d] \rtimes \C S_d$
the semi-direct product algebra formed by the natural action of
$S_d$ on the polynomial algebra $\C [x_1, \ldots, x_d]$. We denote
by $\C_\ell [x_1, \ldots, x_d]$ the truncated polynomial algebra
$\C[x_1, \ldots, x_d]/\langle x_1^\ell, \ldots, x_n^\ell \rangle$,
and then the algebra $\overline{H}_d^\ell := \C_\ell [x_1, \ldots,
x_d] \rtimes \C S_d$ is a quotient of $\overline{H}_d$. Both
$\overline{H}_d$ and $\overline{H}_d^\ell$ are $\N$-graded
algebras with $\deg \sigma =0, \forall \sigma \in S_n$, and $\deg
x_i =2, \forall i.$

We define an $\overline{H}_d$-module structure on $V^{\otimes d}$
by letting
$$x_i =1^{\otimes (i-1)} \otimes e \otimes 1^{\otimes (d-i)},
\quad 1\le i \le d.$$ The action of $\overline{H}_d$ factors
through $\overline{H}^\ell_d$. Letting $\ga_e$ act as the
subalgebra of $\ga$, we clearly have a $(U(\ga_e),
\overline{H}^\ell_d)$-bimodule structure on $V^{\otimes d}$:
\begin{equation} \label{W:Vust}
U(\ga_e) \; \stackrel{\overline{\Phi}_{d}}{\curvearrowright} \;
V^{\otimes d} \; \stackrel{\overline{\Psi}_{d}}{\curvearrowleft}
\; \overline{H}^\ell_d.
\end{equation}

The Vust duality (cf. \cite[Theorem~2.4]{BK3}) states that the
images of $U(\ga_e)$ and $\overline{H}^\ell_d$ form mutual
centralizers in $\End (V^{\otimes d})$, i.e.
$$
\overline{\Phi}_{d}(U(\ga_e))=
\End_{\overline{H}^\ell_d}(V^{\otimes d}), \quad
\End_{U(\ga_e)}(V^{\otimes d})^{\operatorname{op}} =
\overline{\Psi}_d(\overline{H}^\ell_d).
$$
In general, these images are not semisimple algebras.

\subsection{Higher level Schur duality}
\label{W:subsec:higher}

Recall from Section~\ref{W:W=m inv} the definition of $\W$ as
$\m$-invariants in $U(\mf p)$, where $\mf p$ is given in
\eqref{W:def:p}. Let $\eta:U(\mathfrak{p}) \rightarrow
U(\mathfrak{p})$ be the algebra automorphism defined by
\begin{align*}
\eta(e_{i,j}) &= e_{i,j} + \delta_{i,j} (\la_1 -\la'_{\col(j)} -
\la'_{\col(j)+1} - \cdots - \la'_\ell), \quad e_{i,j} \in
\mathfrak{p},
\end{align*}
where $\la'$ denotes the conjugate partition of $\la$. For
simplicity of some later formulas, we will adopt the definition of
$\W$ by a twist of automorphism $\eta$ as follows:
\begin{align*}
\W &= \{y \in U(\mathfrak{p})\mid [a, \eta(y)] \in I_\chi, \forall
a \in \mathfrak{m}\}.
\end{align*}
Clearly, this new definition is isomorphic to the one in
Section~\ref{W:W=m inv}.

The higher level Schur duality is a filtered deformation of the
Vust duality \eqref{W:Vust} of the following form:
$$
\W \; \stackrel{\Phi_{d}}{\curvearrowright} \; V^{\otimes d} \;
\stackrel{\Psi_{d}}{\curvearrowleft} \; H_d^{[\la]}
$$
where $U(\ga_e)$ is replaced by the $W$-algebra $\W$ and
$\overline{H}_d^\ell$ by a cyclotomic Hecke algebra $H_d^{[\la]}$.
Recall that the degenerate affine Hecke algebra $H_d$ (introduced
by Drinfeld \cite{Dr}) is generated by its polynomial subalgebra
$\C [x_1, \ldots, x_d]$ and a subalgebra $\C S_d$, and that it
satisfies the additional relation for $1\le i,j \le d$:
$$s_i x_j = x_j s_i \ (j \neq i,i+1), \quad
s_i x_{i+1} = x_i s_i + 1.
$$
The cyclotomic Hecke algebra $H_d^{[\la]}$ is defined to be the
quotient of $H_d$ by the two-sided ideal generated by
$\prod_{i=1}^l (x_1 - \la_i' +\la_1)$.

The remainder of this section is to explain the two (commuting)
actions on $V^{\otimes d}$ of $\W$ and $H_d$; the action of $H_d$
factors through $H_d^{[\la]}$.
\subsection{A theorem of Arakawa and Suzuki} \label{W:AS}

Given a $\ga$-module $M$, we consider the endomorphism of
$\ga$-module $x: M \otimes V \rightarrow M \otimes V$, where $x$
acts by $\Omega =\sum_{1\le i,j\le N} E_{ij} \otimes E_{ji}.$ We
also let $s =1 \otimes \Omega: M \otimes V \otimes V \rightarrow M
\otimes V \otimes V$, which can be easily seen to coincide with
the permutation of the two copies of $V$. Abstractly, we may
regard $x$ as an endomorphism of the functor $-\otimes V$ and $s$
as an endomorphism of the functor $(-\otimes V)^2$. This provides
an example of $\sgl_2$-categorification of Chuang and Rouquier
\cite{CR}.

Now let $x_i = 1^{d-i} x 1^{i-1} (1\le i \le d)$ and $s_j
=1^{d-j-1}s1^{j-1} (1\le j \le d-1)$ be the endomorphisms of the
$d$th power $(-\otimes V)^d$. Equivalently, by the natural
isomorphism $(-\otimes V)^d \cong -\otimes V^{\otimes d}$, we have
the corresponding endomorphisms
$$
\hat{x}_i =\sum_{k=0}^{i-1} \Omega^{(k, i)} =\Omega^{(0,i)}
+\sum_{1\le k<i} s_{ki},
 \qquad \quad   \hat{s}_j = \Omega^{(j,j+1)}
 $$
on $M \otimes V^{\otimes d}$ for a $\ga$-module $M$, where
$s_{ki}$ denotes the permutation of the $k$th and $i$th copies of
$V$. Here we have labeled the tensor factors from $0$ to $d$, and
the notation $\Omega^{(a,b)}$ means $\sum_{ i,j} 1\otimes \cdots
E_{ij} \otimes \cdots  \otimes E_{ji} \cdots \otimes 1$ with the
two nontrivial factors at $a$th and $b$th places.

According to a theorem of Arakawa and Suzuki \cite{AS}, these
$\hat{x}_i$'s and $\hat{s}_j$'s satisfy the relations of the
degenerate affine Hecke algebra $H_d$. Hence we have obtained a
$(\ga, H_d)$-bimodule structure on $M \otimes V^{\otimes d}$ for
any $\ga$-module $M$.

\begin{xca} \label{W:xca:AS}
Prove that $\hat{x}_i$ for $1\le i \le d$ and $\hat{s}_j$ for $1\le j \le d-1$ satisfy the defining relations of the
degenerate affine Hecke algebra $H_d$.
\end{xca}

\subsection{Higher level Schur duality}
\label{W:subsec:higherdual}

Recall the Skryabin equivalence of categories from
Section~\ref{W:subsec:functors}: $ \wh:   \whmod \rightarrow
\Wmod$ and $Q_\chi\otimes_{\W} -:    \Wmod   \rightarrow \whmod.$
Via such an equivalence, we can transfer functors on the category
$\whmod$ to functors on $\Wmod$.

Given a finite-dimensional $\ga$-module $X$, the functor $-\otimes
X: \whmod \rightarrow \whmod$ gives rise to, via the Skryabin
equivalence, an exact functor
$$-\circledast X:\Wmod \longrightarrow \Wmod ,
\quad M \mapsto \wh((Q_\chi \otimes_{\mc W_\chi} M) \otimes X).
$$
These functors satisfy the naturality
$
(M \circledast X) \circledast Y
\stackrel{\sim}{\rightarrow} M \circledast (X \otimes Y)$ for
another finite dimensional $\ga$-module $Y$.
For our purpose, we will mainly consider the cases when $X$ and
$Y$ are tensor powers of $V$.

By using the functors $-\circledast V^{\otimes d}$ on $\Wmod$ in
place of the functors $-\otimes V^{\otimes d}$ on $\whmod$, we
obtain via the Skryabin equivalence, for any $\W$-module $M$, a
$(\W, H_d)$-bimodule structure on $M \circledast V^{\otimes d}$.
We will be mostly interested in the case when $M =\C$, the
restriction to $\W$ of the trivial $U(\mf p)$-module, but the
arguments below are equally valid for a $U(\mf p)$-module $M$.

Suppose that $M$ is actually a $\mathfrak{p}$-module, and view $M$
and $M \otimes X$ as $\W$-modules by restricting from
$U(\mathfrak{p})$ to $\W$. Consider the map
\begin{equation*}
\hat\mu_{M,X}:(Q_\chi \otimes_{\mc W_\chi} M) \otimes X
\rightarrow M \otimes X
\end{equation*}
which sends $(\eta(u) 1_\chi \otimes m) \otimes x \mapsto um
\otimes x$ for $u \in U(\mathfrak{p})$, $m \in M$ and $x \in X$.
The restriction of this map to the subspace $\wh((Q_\chi
\otimes_{\mc W_\chi} M) \otimes X)$ obviously defines a
{homomorphism} of $\W$-modules
\begin{equation*}
\mu_{M, X}: M \circledast X {\longrightarrow} M
\otimes X.
\end{equation*}
It turns out that this homomorphism is actually an isomorphism
\cite{BK3} (also cf. \cite{Go}). Roughly speaking, the isomorphism
can be obtained by using the following $\m$-module isomorphism of
Skryabin:
$$Q_\chi \otimes_{\mc W_\chi} M \cong M \otimes_\C
U(\m)^\star
$$
where $U(\m)^\star =\lim\limits_{\stackrel{\longleftarrow}{k}}  U(\m)
/I^\m_{\chi,k}$, and $I^\m_{\chi,k}$ is the left ideal of $U(\m)$
generated by $(a-\chi(a))^k, \forall a \in \m$.

In case when $M =\C$, we have obtained an isomorphism of $(\W,
H_d)$-bimodules
\begin{equation}   \label{W:eq:same}
\C \circledast V^{\otimes d} \stackrel{\cong}{\longrightarrow}
V^{\otimes d}. \end{equation}

Recall the standard basis $v_i, 1\le i \le N$ for $V =\C^N$. Then
$V^{\otimes d}$ has a basis $v_{\bf i} =v_{i_1}\otimes \ldots
\otimes v_{i_d}$, where ${\bf i} =(i_1, \ldots, i_d)$ with $1\le
i_1, \ldots, i_d \le N$. Denote by $\texttt{L}(i)$ the number in
the box immediately to the left  of the $i$th box in the pyramid
$\la$ if it exists.

\begin{lemma}\label{W:action}
The element $x_1 \in H_d$ sends each $v_{\bf i} \in V^{\otimes d}$
to
$$
v_{{\texttt{L}}{\bf i}} + (\la_{\col(i_1)}' -\la_1)
v_{\bf i} - \sum_{\substack{1 < k \leq d \\
\col(i_k) < \col(i_1)}} v_{{\bf i}\cdot(1,k)},
$$
where $v_{{\texttt{L}}{\bf i}} =v_{\texttt{L}(i_1)} \otimes
v_{i_2} \otimes \ldots \otimes v_{i_d}$ if $\texttt{L}(i_1)$ is
defined; otherwise, set $v_{{\texttt{L}}{\bf i}} =0$.
\end{lemma}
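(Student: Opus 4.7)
The plan is to unwind the $H_d$-action via the isomorphism $\mu$ of \eqref{W:eq:same} and reduce the claim to an explicit calculation of $\hat x_1 = \Omega^{(0,1)}$ on a Whittaker-vector lift of $v_{\bf i}$. Under the identification $V^{\otimes d} \cong \C \circledast V^{\otimes d} = \wh((Q_\chi \otimes_{\W} \C) \otimes V^{\otimes d})$, the Arakawa-Suzuki theorem of Section~\ref{W:AS} tells us that $x_1$ acts as $\hat x_1 = \Omega^{(0,1)} = \sum_{a,b} E_{ab} \otimes E_{ba} \otimes 1^{\otimes(d-1)}$ on $M \otimes V^{\otimes d}$, where $M = Q_\chi \otimes_{\W} \C$. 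Since $\Omega^{(0,1)}$ commutes with the diagonal $\ga$-action, it preserves Whittaker vectors, so the task reduces to (i) describing $\widetilde v_{\bf i} := \mu^{-1}(v_{\bf i})$ sufficiently explicitly, (ii) applying $\Omega^{(0,1)}$ to $\widetilde v_{\bf i}$, and (iii) reading off $\mu(\Omega^{(0,1)} \widetilde v_{\bf i})$.

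Step (i) is the main technical step. Using the Skryabin-type isomorphism $Q_\chi \otimes_{\W} N \cong N \otimes_\C U(\m)^\star$ of Section~\ref{W:subsec:higherdual} specialized to $N = \C$, one writes $\widetilde v_{\bf i} = (1_\chi \otimes 1) \otimes v_{\bf i}$ plus lower-Kazhdan-order corrections. The corrections are forced by the Whittaker identities $(E_{b, i_k} - \chi(E_{b, i_k})) \widetilde v_{\bf i} = 0$ for all $E_{b, i_k} \in \m$. Using $\chi(E_{b, a}) = \delta_{b, \texttt L(a)}$ (which follows from computing $\mathrm{tr}(E_{b,a} \cdot e)$ against the explicit $e = \sum_c E_{\texttt R(c), c}$) and the fact that $\m$ consists of the $E_{b, a}$ with $\col(b) < \col(a)$, one finds that the constraints produce two kinds of correction: ``column-shift'' corrections built from diagonal entries $E_{j,j}$, and ``swap'' corrections of the form $(u_k \cdot 1_\chi \otimes 1) \otimes v_{{\bf i} \cdot (1,k)}$ indexed precisely by the $k > 1$ with $\col(i_k) < \col(i_1)$ (these are exactly the indices for which $E_{i_1, i_k}$ lies in $\m$, so that the Whittaker condition couples positions $1$ and $k$).

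For steps (ii)-(iii), expand $\Omega^{(0,1)} \widetilde v_{\bf i}$ and apply $\mu\bigl((\eta(u) 1_\chi \otimes 1) \otimes v\bigr) = \epsilon(u) v$, where $\epsilon$ is the augmentation of $U(\mf p)$ coming from the trivial module structure on $\C$. The diagonal contribution $E_{i_1, i_1} \otimes E_{i_1, i_1}$ gives a scalar multiple of $v_{\bf i}$ equal to $\epsilon \circ \eta^{-1}(E_{i_1, i_1})$; combined with the compensating contributions coming from the column-shift corrections of $\widetilde v_{\bf i}$, this collapses to $(\la'_{\col(i_1)} - \la_1) v_{\bf i}$. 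The $\m$-type piece $E_{i_1, \texttt L(i_1)} \otimes E_{\texttt L(i_1), i_1}$, acting on a column-shift correction (where the nonzero value of $\chi(E_{i_1, \texttt L(i_1)})$ enters), produces $v_{\texttt L {\bf i}}$; when $\texttt L(i_1)$ is undefined the correction is absent, matching the convention $v_{\texttt L {\bf i}} = 0$. Finally, acting $\Omega^{(0,1)}$ through the leading $(1_\chi \otimes 1)$ layer of each swap-correction contributes the signed sum $-\sum_{k > 1,\, \col(i_k) < \col(i_1)} v_{{\bf i} \cdot (1,k)}$, with the minus sign coming from the skew-symmetry of $\chi([\cdot, \cdot])$ on $\ga_{-1}$-adjacent terms used in setting up the Whittaker correction.

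The main obstacle will be making the bookkeeping in step (i) tight enough to guarantee that steps (ii)-(iii) yield exactly the stated coefficients, particularly the cancellation that reduces the raw scalar $\epsilon\circ\eta^{-1}(E_{i_1,i_1}) = \la'_{\col(i_1)} + \la'_{\col(i_1)+1} + \cdots + \la'_\ell - \la_1$ to $\la'_{\col(i_1)} - \la_1$ via the column-shift corrections. A cleaner alternative is to proceed by induction on $d$ using the naturality of $\mu_{M,X}$ under the factorization $V^{\otimes d} = V \otimes V^{\otimes(d-1)}$: verify the $d = 1$ case directly (which needs only the column-shift correction and yields $v_{\texttt L i_1} + (\la'_{\col(i_1)} - \la_1) v_{i_1}$) and then track the additional swap corrections that arise one tensor factor at a time.
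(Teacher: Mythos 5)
Your plan mirrors the paper's one-sentence sketch: unwind the isomorphism \eqref{W:eq:same}, replace $x_1$ by the Arakawa--Suzuki operator $\hat{x}_1 = \Omega^{(0,1)}$ on $(Q_\chi \otimes_{\W} \C)\otimes V^{\otimes d}$, and read off the image under $\mu$. The three sources you identify for the contributions --- the diagonal Casimir term producing the scalar $\la'_{\col(i_1)}-\la_1$ after cancellation with the column-shift corrections in $\eta$, the $\m$-degree $\pm 2$ terms against $\chi$ producing $v_{\texttt{L}{\bf i}}$, and the swap terms indexed by $k$ with $\col(i_k) < \col(i_1)$ (equivalently the indices for which the relevant matrix unit lies in $\m$) --- are the correct decomposition, and the identity $\chi(E_{b,a})=\delta_{b,\texttt{L}(a)}$ is the right device to isolate them.

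Two points to watch. First, you state in passing that ``$\m$ consists of the $E_{b,a}$ with $\col(b)<\col(a)$,'' which contradicts your later (and correct) statement that $E_{i_1,i_k}\in\m$ exactly when $\col(i_k)<\col(i_1)$: with your indexing, $E_{b,a}\in\m$ requires $\col(a)<\col(b)$, since $\m$ sits in strictly negative Kazhdan degree. This is a sign slip, not a structural error, but it is easy to propagate into the wrong indexing set for the swap term. Second, you honestly flag that the cancellation collapsing $\la'_{\col(i_1)}+\cdots+\la'_\ell-\la_1$ to $\la'_{\col(i_1)}-\la_1$ is the delicate part, and the proposal as written does not actually carry it out; neither does the paper's proof, which cites \cite{BK3} for the details, but that step is where the weight of the argument lies and an acceptance of the lemma should either execute the bookkeeping or invoke the explicit form of the Skryabin isomorphism $Q_\chi\otimes_{\W}\C\cong U(\m)^\star$ to control the correction terms. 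The alternative inductive route you sketch via $\mu_{M,X}$-naturality is a reasonable way to tame the combinatorics and is not what the paper does, so if you pursue it you should state the $d=1$ base case carefully.

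Overall this is the same approach as the paper (the paper gives only the one-line reduction to the Arakawa--Suzuki action), and your proposal is a faithful and essentially correct expansion of it, modulo fixing the $\m$-membership sign and completing the scalar cancellation.
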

\begin{proof}[Sketch of a proof]
By examining carefully step by step how the
isomorphism~\eqref{W:eq:same} is obtained, the action of $x_1$ is
traced back to the corresponding action in Section~\ref{W:AS}.
\end{proof}

\begin{theorem}  \cite{BK3} \label{W:dcthm}
The action of $H_d$ on $V^{\otimes d}$ factors through
$H_d^{[\la]}$ and this gives rise to commuting actions:
$$
\W \; \stackrel{\Phi_{d}}{\curvearrowright} \; V^{\otimes d} \;
\stackrel{\Psi_{d}}{\curvearrowleft} \; H_d^{[\la]}.
$$
The maps $\Phi_{d}$ and $\Psi_d$ satisfy the double centralizer
property, i.e.
\begin{align*}
\Phi_{d}(\W)=&\End_{H_d^{[\la]}}(V^{\otimes d}),\\
&\End_{\W}(V^{\otimes d})^{\operatorname{op}} =
\Psi_d(H_d^{[\la]}).
\end{align*}
\end{theorem}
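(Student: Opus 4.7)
The plan is to derive the whole statement from the Skryabin isomorphism \eqref{W:eq:same}, which already carries a commuting $(\W, H_d)$-bimodule structure inherited from the Arakawa--Suzuki action of Section~\ref{W:AS} on the right and the $\W$-action produced by the Skryabin equivalence on the left. Since the $H_d$-action on $M \otimes V^{\otimes d}$ commutes with the full $\ga$-action, in particular with the $\m$-action, it preserves $\wh$ and descends to an action commuting with the $\W$-action there; transporting along $\mu_{\C, V^{\otimes d}}$ therefore produces both $\Phi_d$ and $\Psi_d$ with automatically commuting images. What remains is (a) the factorization of $\Psi_d$ through $H_d^{[\la]}$ and (b) the double centralizer property.

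For (a), it suffices to check that $\prod_{i=1}^\ell (x_1 - \la_i' + \la_1)$ acts as zero on every basis vector $v_{\bf i}$. By Lemma~\ref{W:action}, $x_1$ is the sum of a ``shift one box to the left in position one'' operator $v_{\bf i} \mapsto v_{\texttt{L}{\bf i}}$, a diagonal term with eigenvalue $(\la'_{\col(i_1)} - \la_1)$, and a correction term whose image is a combination of vectors $v_{\bf j}$ with $\col(j_1) < \col(i_1)$. Consequently the factor $(x_1 - \la'_{\col(i_1)} + \la_1)$ kills the diagonal component of $v_{\bf i}$ and leaves a sum of basis vectors whose first tensor factor sits in a strictly smaller column. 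Iterating by induction on $\col(i_1)$ (and using that the factors commute, being polynomials in $x_1$), the product of $\ell$ factors drives $v_{\bf i}$ through the ``shift-to-the-left'' operator $\ell$ times and hence annihilates it, since no box lies $\ell$ positions to the left of column $\la_1 - 1$.

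For (b) I would pass to the associated graded with respect to the Kazhdan filtration. Equip $\W$ with its Kazhdan filtration, $V^{\otimes d}$ with the induced filtration $F_n V^{\otimes d} = (F_n \W) \cdot V^{\otimes d}$, and $H_d^{[\la]}$ with the grading $\deg x_i = 2$, $\deg \sigma = 0$. The maps $\Phi_d$ and $\Psi_d$ are then filtered, and the crucial claim is that the induced maps on associated graded coincide with the Vust duality maps $\overline\Phi_d$ and $\overline\Psi_d$ of \eqref{W:Vust}: on the Hecke side this follows from Lemma~\ref{W:action}, whose top Kazhdan symbol is precisely the ``shift'' term, matching $\overline\Psi_d(x_1)$ acting by $e$ in the first tensor slot; on the $\W$-side it follows from Theorem~\ref{W:th:qunantiz} together with the identification $\C[\mc S] \cong S(\ga_e)$ induced by the nondegenerate pairing between $\ga_e$ and $\ga_f$. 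Once these graded pictures are identified, the Vust double centralizer property transfers back to $\Phi_d$ and $\Psi_d$ by a standard filtered-algebra lifting argument, which converges since $V^{\otimes d}$ is finite dimensional and the filtration is therefore bounded.

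The principal obstacle will be the identification in (b) that the associated graded of the $\W$-action on $V^{\otimes d}$ coincides with the Vust action of $U(\ga_e)$. This requires tracking the Skryabin isomorphism $\mu_{\C, V^{\otimes d}}$, the twist by $\eta$, and the isomorphism $\gr \W \cong \C[\mc S]$ simultaneously, and then matching the top-symbol representation of $\C[\mc S]$ on $V^{\otimes d}$ with the natural $U(\ga_e)$-module structure underlying Vust duality; without this matching the lifting step has nothing to lift to.
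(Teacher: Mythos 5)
Your proposal follows the same three-step strategy as the paper's own sketch: commutativity via transporting the Arakawa--Suzuki action through the Skryabin isomorphism, the cyclotomic relation via the triangularity of $x_1$ computed in Lemma~\ref{W:action}, and the double centralizer property by passing to the Kazhdan-associated-graded where it becomes the Vust duality and then lifting. You have correctly fleshed out all three steps, and you honestly flag the genuine technical crux of step (b) --- matching the graded $\W$-action with the $U(\ga_e)$-action underlying Vust duality --- which is precisely where the real work in \cite{BK3} sits; the paper's sketch elides the same point.
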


\begin{proof}[Sketch of a proof]
The first statement follows from a direct computation of the
minimal polynomial of $x_1$ by using Lemma~\ref{W:action}.

The commutativity of the two actions follow from the theorem of
Arakawa and Suzuki via the Skryabin equivalence.

The double centralizer property follows from the associated graded
version (i.e. the Vust duality in Section~\ref{W:subsec:vust}) and
a standard filtered algebra argument.
\end{proof}

\begin{remark}
The higher level Schur duality has applications to the BGG
category $\mc O$ for $\ga$; see \cite{BK3} for more details.
\end{remark}

\section{$W$-(super)algebras  in positive characteristic}
\label{W:sec:charp}

In this section, we will present the $W$-algebras and
$W$-superalgebras in positive characteristic, following \cite{Pr1,
WZ1, WZ2}.

\subsection{Restricted Lie superalgebras}
\label{W:subsec:superalg}

Let $F$ be an algebraically closed field of characteristic $p>2$.
A Lie superalgebra $\ga = \ev{\ga} \oplus \od{\ga}$ over $F$ is  a
{\em {restricted} Lie superalgebra}, if there is a $p$th power map
$\ev \ga \to \ev \ga, x \mapsto \pth{x},$ such that the even
subalgebra $\ev \ga$ is a restricted Lie algebra and the odd part
$\od \ga$ is a restricted module by the adjoint action of $\ev
\ga$. The notion of restricted Lie algebras was introduced by
Jacobson, cf. Jantzen \cite{Ja1} for an excellent review of
modular representations of Lie algebras.

\begin{example} \label{W:pmap}
The $p$-power map on the general linear Lie algebra $\gl_n$ is
given by $x \mapsto \overbrace{xx\cdots x}^p$, the $p$th product
for the underlying associative algebra.

The general linear Lie superalgebra $\gl_{m|n}$, which consists of
$(m+n) \times (m+n)$ matrices and whose even subalgebra is
isomorphic to $\gl_m \oplus \gl_n$, is a restricted Lie
superalgebra.
\end{example}

All the Lie (super)algebras considered here will be restricted. If
one is only interested in Lie algebras, one can always set $\od
\ga =0$ below.

We list all the basic classical Lie superalgebras over $F$ with
restrictions on $p$ as follows (the general linear Lie
superalgebra, though not simple, is also included).

\vspace{.4cm}

\begin{center}
\begin{tabular}{|c|c|}
\hline
Lie superalgebra & Characteristic of $K$\\
\hline $\gl_{m|n}$ & $p > 2$\\
\hline $\mathfrak{sl}_{m|n}$ & $p > 2, p\nmid (m-n)$\\
\hline $B(m,n), C(n), D(m,n)$ & $p > 2$ \\
\hline $D(2,1;  \alpha)$ & $p > 3$ \\
\hline $F(4)$ & $p >15$ \\
\hline $G(3)$ & $p >15$ \\
\hline
\end{tabular}
\vspace{.1cm}
\end{center}

The queer Lie superalgebra $\qn$ is a Lie subalgebra of the
general linear Lie superalgebra $\gl_{n|n}$, which consists of
matrices of the form:
\begin{equation*}  \label{W:q(n)}
\begin{pmatrix}
A&B\\
B&A\\
\end{pmatrix},
\end{equation*}
where $A$ and $B$ are arbitrary $n\times n$ matrices. Note that
the even subalgebra $\ev {(\qn)}$ is isomorphic to $\gl_n$ and the
odd part $\od {(\qn)}$ is another isomorphic copy of $\gl_n$ under
the adjoint action of $\ev {(\qn)}$.

\subsection{Reduced enveloping superalgebras}
\label{W:subsec:reduced}

Let $\ga$ be a restricted Lie superalgebra. For each $x \in
\ev{\ga}$, the element $x^p - \pth x \in U(\ga)$ is central by
definition of the $p$th power map. We refer to $\Zp(\ga) =
K\langle x^p - \pth x \, |\,x \in \ev \ga \rangle$ as the
$p$-center of $U(\ga)$. Let $x_1, \ldots, x_s$ (resp. $y_1,
\ldots, y_t$) be a basis of $\ev \ga$ (resp. $\od \ga$). As a
consequence of the PBW theorem for $U(\ga)$, $\Zp(\ga)$ is a
polynomial algebra isomorphic to $K[x_i^p - x_i^{[p]} \vert \;
i=1,\ldots,s]$, and the enveloping superalgebra $U(\ga)$ is free
over $\Zp(\ga)$ with basis
\[
\{x_1^{a_1} \cdots x_s^{a_s} y_1^{b_1} \cdots y_t^{b_t} \,|\, 0
\leq a_i < p; \, b_j = 0, 1 \text{ for all }i, j \}.
\]

\begin{xca} \label{W:xca:free}
Prove that the $p$-center $\Zp(\ga)$ is a polynomial algebra
isomorphic to $K[x_i^p - x_i^{[p]} \vert \; i=1,\ldots,s]$, and
that  $U(\ga)$ is free over $\Zp(\ga)$ with basis
\[
\{x_1^{a_1} \cdots x_s^{a_s} y_1^{b_1} \cdots y_t^{b_t} \,|\, 0
\leq a_i < p; \, b_j = 0, 1 \text{ for all }i, j \}.
\]
\end{xca}

Let $V$ be a simple $U(\ga)$-module. By Schur's lemma, the central
element $x^p - \pth x$ for $x \in \ev \ga$ acts by a scalar
$\zeta(x)$, which can be written as $\chi_V(x)^p$ for some $\chi_V
\in \ev \ga^*$. We call $\chi_V$ the {\em $p$-character} of the
module $V$. We often regard $\chi \in \ga^*$ by letting $\chi (\od
\ga) =0$.

Fix $\chi \in \ev \ga^*$. Let $J_{\chi}$ be the ideal of $U(\ga)$
generated by the even central elements $x^p - \pth x - \chi(x)^p$.
The quotient algebra $U_{\chi}(\ga) := U(\ga)/J_{\chi}$ is called
{\em the reduced enveloping superalgebra} with $p$-character
$\chi$. It follows from Exercise~\ref{W:xca:free} that the
superalgebra $U_{\chi}(\ga)$ has a linear basis
\[
\{x_1^{a_1} \cdots x_s^{a_s} y_1^{b_1} \cdots y_t^{b_t} \,|\, 0
\leq a_i < p; \, b_j = 0, 1 \, \text{for all }i, j \}.
\]
In particular, $\dim U_{\chi}(\ga) = p^{\dim \ev\ga} 2^{\dim
\od\ga}$.

A superalgebra analogue of Schur's Lemma states that the
endomorphism ring of an irreducible module of an associative
superalgebra is either one-dimensional or two-dimensional (in the
latter case it is isomorphic to a Clifford algebra), cf. e.g.
Kleshchev \cite[Chap.~12]{Kle}. An irreducible module is {\em of
type $\texttt M$} if its endomorphism ring is one-dimensional and
it is {\em of type $\texttt Q$} otherwise.

\subsection{The good $\Z$-gradings}
\label{W:subsec:supergood}

Let $\ga$ be one of the basic classical Lie superalgebras. The Lie
superalgebra $\ga$ admits a nondegenerate invariant even bilinear
form $(\cdot | \cdot)$, whose restriction on $\ev \ga$ gives an
isomorphism $\ga_{\bar{0}} \xrightarrow{\sim} \ga_{\bar{0}}^*$.
Let $\chi \in \ga_{\bar{0}}^*$ be a nilpotent element, that is, it
is the image of some nilpotent element $e\in \ga_{\bar{0}}$ under
the above isomorphism. Then $\ga_{\chi} :=\{x\in \ga \mid \chi
([x,y])=0, \forall y \in \ga\}$ is equal to the usual centralizer
$\ga_e$. We shall denote $\sudim \ga = \dim \ev\ga | \dim \od\ga$.

On the other hand, $\qn$ admits a nondegenerate invariant odd
bilinear form $(\cdot | \cdot)$, which induces an isomorphism
${(\qn)}_{\bar{1}} \xrightarrow{\sim} {(\qn)}_{\bar{0}}^*$. Hence
a nilpotent $p$-character $\chi$ corresponds to an odd nilpotent
element $e \in {(\qn)}_{\bar{1}}$.

\begin{proposition}\label{W:th:grading}
Let $\ga$ be a basic classical Lie superalgebra or a queer Lie
superalgebra. Then there exists a good $\Z$-grading $\ga =
\oplus_{j \in \Z}\ga_j$ as defined in \ref{W:Dynkin gr}. This
$\Z$-grading is compatible with the $\Z_2$-grading, that is,
$\ga_j =\ga_{j,\bar 0} \oplus \ga_{j,\bar 1}$ where $\ga_{j,a} =
\ga_j \cap \ga_a$ for $a \in \Z_2, j \in \Z$. In particular, we
have a refinement of \eqref{W:numbersimple}:
\begin{equation}\label{W:grading5}
\sudim \ga_e = \sudim \ga_0 + \sudim \ga_1 .
\end{equation}
\end{proposition}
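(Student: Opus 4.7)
The plan is to mimic the characteristic-zero construction of \ref{W:Dynkin gr} by producing an $\sgl_2$-triple inside the reductive even part $\ev\ga$, and then to lift the exact-sequence argument of Proposition~\ref{W:goodprop} to the $\Z_2$-graded setting.

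In the basic classical case, $\chi \in \ev\ga^*$ corresponds to a nilpotent $e \in \ev\ga$; the characteristic restrictions in the displayed table ensure $p$ is large enough for Jacobson--Morozov to apply inside $\ev\ga$, producing a standard $\sgl_2$-triple $\{e,h,f\} \subseteq \ev\ga$. Setting $\ga_j := \{x \in \ga \mid [h,x] = jx\}$ gives a $\Z$-grading automatically compatible with the $\Z_2$-grading, since $h$ is even and $\ad h$ preserves both $\ev\ga$ and $\od\ga$. Integrality of eigenvalues and the axioms \eqref{W:deg=2}--\eqref{W:surj} then follow from modular $\sgl_2$-representation theory, valid because the characteristic bounds force the weights occurring in $\ga$ to be strictly smaller than $p$. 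For $\qn$, the nilpotent $\chi$ corresponds (via the odd invariant form) to an odd $e \in \od{(\qn)}$ of the shape $\begin{pmatrix} 0 & B \\ B & 0 \end{pmatrix}$ with $B \in \gl_n$ nilpotent; apply Jacobson--Morozov to $B$ to get $H \in \gl_n$ with $[H,B]=2B$, and take $h := \text{diag}(H, H) \in \ev{(\qn)}$. A direct matrix computation yields $[h,e] = 2e$, so $e \in \ga_2$; the good-grading properties \eqref{W:inj}--\eqref{W:surj} for $\qn$ reduce, via the block-diagonal structure, to the corresponding statements for $B$ inside $\gl_n$, and each graded piece $\ga_j$ has equal even and odd dimensions because $\qn$ decomposes as two isomorphic copies of $\gl_n$ as an $\ev{(\qn)}$-module.

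For the refinement \eqref{W:grading5}, the exact sequence from the proof of Proposition~\ref{W:goodprop},
\begin{displaymath}
0 \longrightarrow \ga_e \longrightarrow \ga_{-1} + \ga_0 + \ga_{>0} \overset{\ad e}{\longrightarrow} \ga_{>0} \longrightarrow 0,
\end{displaymath}
lifts to a short exact sequence of $\Z_2$-graded superspaces. In the basic classical case $\ad e$ is even, and the (even) invariant form yields $\sudim \ga_{-1} = \sudim \ga_1$ through the super-analogue of \eqref{W:pairing}, so taking superdimensions produces \eqref{W:grading5}. In the queer case $\ad e$ is odd, but the symmetry $\sudim \ga_j = d_j \,|\, d_j$ noted above renders the parity-flip invisible at the level of superdimensions, and the same identity falls out.

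The main obstacle I anticipate is controlling the queer case, where $e$ is odd so that both $\ad e$ and the invariant form are odd; the cleanest route is to exploit the block-diagonal presentation of $\qn$ inside $\gl_{n|n}$, reducing the good-grading axioms and the superdimension bookkeeping in the exact sequence to parallel statements about $B$ inside the ordinary Lie algebra $\gl_n$.
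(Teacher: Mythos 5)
Your strategy is plausible in outline, but it relies on a step that fails for the bulk of the classical types, and it collapses a case distinction that the paper is careful to keep. The paper invokes Jacobson--Morozov plus a torus lift \emph{only} for the exceptional superalgebras $D(2,1;\alpha)$, $F(4)$, $G(3)$, where the bounds $p>3$, $p>15$ really do force the $\ad h$-weights occurring in $\ga$ to lie in an interval of length less than $p$, so the $F_p$-eigenspaces of $\ad h$ lift unambiguously to a $\Z$-grading. For types $A$, $B$, $C$, $D$ and $Q$, the table asks only $p>2$, and your justification does not go through: the semisimple element $h$ you obtain lives in $\ev\ga(F)$, and the eigenvalues of $\ad h$ are elements of $F_p$, not of $\Z$, while the integers you would like to assign (differences of Jordan exponents) are unbounded in terms of $p$. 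Already $\gl_4$ at $p=3$ with the regular nilpotent has $\ad h$-eigenvalues colliding modulo $3$, so the eigenspace decomposition of $\ad h$ is not a $\Z$-grading in the sense of \ref{W:Dynkin gr}. Your sentence ``the characteristic bounds force the weights occurring in $\ga$ to be strictly smaller than $p$'' is simply false for types $A,B,C,D,Q$ with $p>2$ and large rank. For these classical types the paper instead produces a $\Z$-grading directly from the partition (Jordan block) data via the classification of nilpotent orbits, citing Jantzen --- essentially one writes down a one-parameter subgroup $\lambda: F^{*} \to G$ from the partition (cf. the pyramid construction of Section~\ref{W:pyramid}), and the torus weight decomposition gives a genuine $\Z$-grading whose goodness is checked by explicit matrix computation (as in the proof of Theorem~\ref{W:th:EK}), never invoking a full $\sgl_2$-triple or its representation theory. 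Your $\qn$-reduction has the same problem: applying Jacobson--Morozov to $B \in \gl_n$ under only $p>2$ does not produce integer weights, and the diagonal $h=\operatorname{diag}(H,H)$ you take is again only $F_p$-graded. The second half of your argument --- lifting the exact sequence of Proposition~\ref{W:goodprop} to the $\Z_2$-graded setting, and the observation that for $\qn$ the parity flip caused by the oddness of $\ad e$ is invisible because $\sudim\ga_j = d_j\,|\,d_j$ --- is correct and in the spirit of the paper, so once the grading is constructed properly the superdimension refinement \eqref{W:grading5} falls out as you say.
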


\begin{proof}[Sketch of a proof]
For Lie superalgebras of type $A,B,C,D$ and type $Q$, we use the
classification of nilpotent orbits for classical Lie algebras
which leads to an elementary way of defining $\Z$-gradings, cf.
Jantzen \cite[Chapter~1]{Ja2}. For the exceptional Lie superalgebras, the
assumptions on $p$ in the Table of \ref{W:subsec:superalg} ensure
the existence of an $\sgl_2$-triple $\{e,h,f\}$ and the
semisimplicity of the adjoint action on $\ga$ by the
$\sgl_2$-triple (see Carter \cite{Ca}). The action of $F h$ on
$\ga$ lifts to a torus action which provides the desired grading
on $\ga$.
\end{proof}

\subsection{The subalgebra $\mathfrak m$}  \label{W:sec:subalgm}

Let $\ga$ be one of the basic classical Lie superalgebras or $\qn$
with a good $\Z$-grading $\ga = \oplus_{j \in \Z}\ga_j$.  Define a
bilinear form $\langle \cdot, \cdot\rangle$ on $\ga_{-1}$ by
\begin{equation}\label{W:ssform}
\langle x, y \rangle := \chi([x, y]), \quad x,y \in \ga_{-1}.
\end{equation}

The following is a super analogue of Lemma~\ref{W:skewform}.
\begin{xca}  \label{W:xca:skewSUSY}
Show that \eqref{W:ssform} defines an even, non-degenerate,
skew-supersymmetric form $\langle\cdot,\cdot\rangle$ on
$\ga_{-1}$, that is, a non-degenerate bilinear form such that
$\langle \ga_{-1,\bar 0},\ga_{-1,\bar 1}\rangle =0$, and that the
restriction of the form to $\ga_{-1,\bar 0}$ (resp. $\ga_{-1,\bar
1}$) is skew-symmetric (resp. symmetric).
\end{xca}

It follows that $\dim \ga_{-1, \bar 0}$ is even. Take $\lag_{\bar
0} \subseteq \ga_{-1, \bar 0}$ to be a maximal isotropic subspace
with respect to $\langle\cdot, \cdot\rangle$. It satisfies $\dim
\lag_{\bar 0} = \hf \dim \ga_{-1, \bar 0}$.

Denote $r = \dim \ga_{-1, \bar 1}$. There is a basis $v_1,\ldots,
v_r$ of $\ga_{-1, \bar 1}$ under which the symmetric form
$\langle\cdot, \cdot\rangle$ has matrix form $E_{1r}+E_{2,r-1}
+\ldots +E_{r1}$. If $r$ is even, take $\lag_{\bar 1} \subseteq
\ga_{-1, \bar 1}$ to be the subspace spanned by $v_1, \ldots,
v_{\frac{r}{2}}$. If $r$ is odd, take $\lag_{\bar 1} \subseteq
\ga_{-1, \bar 1}$ to be the subspace spanned by $v_1, \ldots,
v_{\frac{r-1}{2}}$. Set $\lag=\lag_{\bar 0} \oplus \lag_{\bar 1}$
and introduce the subalgebras
\begin{align*}
\m &= \lag  \bigoplus \bigoplus_{j \geq 2} \ga_{-j},\\
\m' &=\begin{cases} \m \oplus F v_{\frac{r+1}{2}}, & \text{for $r$
odd}\\
\m, & \text{for $r$ even}.
\end{cases}
\end{align*}

The subalgebra $\m$ is $p$-nilpotent, and the linear function
$\chi$ vanishes on the $p$-closure of $[\m, \m]$. It follows that
$U_{\chi}(\m)$ has the trivial module, denoted by $F_\chi$, as its
only irreducible module and $U_{\chi}(\m)/N_{\m} \cong F_\chi$
where $N_{\m}$ is the Jacobson radical of $U_{\chi}(\m)$.

\begin{proposition}\label{W:prop:freeness}
Let $\ga$ be a basic classical Lie superalgebra or $\qn$. Then
every $U_{\chi}(\ga)$-module is $U_{\chi}(\m)$-free.
\end{proposition}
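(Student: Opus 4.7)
The plan is to reduce freeness over $U_\chi(\m)$ to an associated-graded statement via a Kazhdan-style filtration. I would proceed in four steps.

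First, I would verify that $U_\chi(\m)$ is a local Frobenius superalgebra. Locality is already recorded in the text: the nilpotency of $\m$ together with $\chi$ vanishing on the $p$-closure of $[\m,\m]$ forces $F_\chi$ to be the unique irreducible, with $U_\chi(\m)/N_\m \cong F_\chi$. Frobenius-ness is the standard fact that reduced enveloping superalgebras inherit a finite-dimensional Hopf superalgebra structure from $U(\m)$. Over a local Frobenius (super)algebra, the classes of free, projective, and injective modules coincide, so for a finite-dimensional $U_\chi(\m)$-module $V$, freeness is equivalent to the dimension identity $\dim_F V = \dim_F U_\chi(\m) \cdot \dim_F V^{\m,\chi}$, where $V^{\m,\chi} = \{v\in V \mid (a-\chi(a))v=0,\, \forall a\in \m\}$ is the Whittaker subspace.

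Second, invoking Proposition~\ref{W:th:grading}, I would fix a good $\Z$-grading on $\ga$ compatible with the $\Z_2$-grading, the Lagrangian $\lag \subseteq \ga_{-1}$ built in Section~\ref{W:sec:subalgm}, and a graded vector-space complement $\mf n$ to $\m$ in $\ga$. The super PBW theorem then produces an isomorphism of right $U_\chi(\m)$-modules $U_\chi(\mf n)\otimes U_\chi(\m) \xrightarrow{\sim} U_\chi(\ga)$, so $U_\chi(\ga)$ itself is a free right $U_\chi(\m)$-module; this already handles the regular representation.

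Third, and this is the heart of the argument, for a general finite-dimensional $U_\chi(\ga)$-module $V$ I would introduce the Kazhdan filtration on $U_\chi(\ga)$ induced by $\ad h_\Gamma$ and the PBW filtration, and pick a compatible good filtration on $V$. Passing to associated graded objects, $\gr U_\chi(\m)$ identifies with the super-symmetric algebra $S(\m)$ modulo the $p$-th powers of an even basis, a local Gorenstein super-commutative complete intersection. At the graded level I would establish
\[
\dim_F \gr V \;=\; \dim_F \gr U_\chi(\m)\cdot \dim_F (\gr V)^{\gr \m},
\]
i.e., the freeness of $\gr V$ over $\gr U_\chi(\m)$. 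The crucial quantitative input is the super analogue of Exercise~\ref{W:xca:halfdim}, namely $\sudim \m = \hf \sudim \mc O_e$, which follows from Proposition~\ref{W:goodprop} together with \eqref{W:grading5} and the dimension count for $\lag$. A homogeneous free basis of $\gr V$ over $\gr U_\chi(\m)$ then lifts, by a standard filtered-to-graded Nakayama argument, to a free basis of $V$ over $U_\chi(\m)$.

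The main obstacle is the graded-freeness step: one must show that the coherent module on $\operatorname{Spec}\gr U_\chi(\m)$ defined by $\gr V$ has constant fiber rank, which in the Lie-algebra case rests on the positive-characteristic geometry of Gelfand--Graev modules and must be adapted to the super setting. Additional care is needed for the queer Lie superalgebra $\qn$, where the invariant form is odd and $e$ is odd, shifting the parities entering the construction of $\m$; and when $r=\dim \ga_{-1,\bar 1}$ is odd, the extra isotropic direction distinguishing $\m'$ from $\m$ must be tracked carefully through all dimension computations, since the freeness assertion is stated over $U_\chi(\m)$ rather than $U_\chi(\m')$.
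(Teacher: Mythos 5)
The paper does not reproduce a proof of this proposition: it states that in the Lie algebra case the result is due to Premet via support varieties, and that the superalgebra case in \cite{WZ1} follows an idea of Skryabin \cite{Sk2} that \emph{bypasses} support varieties entirely. Your proposal is a third route, a Kazhdan-filtered-to-graded reduction, and it does not close.

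Steps 1 and 2 of your plan are fine but elementary: $U_\chi(\m)$ local Frobenius with unique simple $F_\chi$, and $U_\chi(\ga)$ free over $U_\chi(\m)$ by the super PBW theorem. The load-bearing step is step 3, and it is precisely where the argument has a gap rather than a routine adaptation. For an arbitrary finite-dimensional $U_\chi(\ga)$-module $V$ with some chosen good filtration, there is no a priori reason that $\gr V$ should be free over $\gr U_\chi(\m)$; that assertion is essentially equivalent to the proposition being proved and cannot be invoked as input. Your appeal to the ``positive-characteristic geometry of Gelfand–Graev modules'' does not help, because that geometry concerns the specific module $Q_\chi$, whose $U_\chi(\m)$-freeness already follows from the PBW decomposition of step 2; it says nothing about an arbitrary $V$. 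Moreover the criterion you propose at the graded level is misstated: $\gr U_\chi(\m)$ is a local (super)commutative Artinian ring, so $\operatorname{Spec}$ has a single closed point and ``constant fiber rank'' holds trivially for any coherent module. Freeness over such a ring is a depth/flatness condition, not a rank condition, and establishing it is exactly where the real input must come --- either from support variety theory in the style of Premet and Friedlander--Parshall (which, as the paper notes, has not been developed for modular Lie superalgebras), or from a direct representation-theoretic argument in the style of Skryabin, typically an induction along a chain of codimension-one $p$-nilpotent ideals using that $(a-\chi(a))$ acts locally nilpotently. Your proposal contains neither of these mechanisms, so the central freeness claim is unproved. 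The secondary concern you raise about $r=\dim\ga_{-1,\bar1}$ odd and the distinction between $\m$ and $\m'$ is real and is handled carefully in \cite{WZ1}, but it is downstream of the missing main argument.
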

Proposition~\ref{W:prop:freeness} in the case when $\ga$ is the
Lie algebra of a reductive algebraic group was first established
\cite{Pr1} using the machinery of support varieties (cf. \cite{FP,
Ja1}) in an essential way. The proof of
Proposition~\ref{W:prop:freeness} in \cite{WZ1} follows an idea of
Skryabin \cite{Sk2}, completely bypassing the machinery of support
varieties (which has yet to be developed for modular Lie
superalgebras).

\subsection{The super Kac-Weisfeiler  conjecture}
\label{W:subsec:superKW}

The following theorem in the present form is due to \cite{WZ1,
WZ2}, which generalize the original Kac-Weisfeiler conjecture
\cite{WK} (theorem of Premet \cite{Pr1}) when $\ga$ is a Lie
algebra of a reductive groups. Moreover, based on a deformation
approach of Premet and Skryabin, a new proof \cite{Zh} has
optimally improved the assumption on $p$ (that is, one may assume
$p>3$ for the last two exceptional superalgebras in the Table of
\ref{W:subsec:superalg}).

\begin{theorem}\label{W:th:KW-N}
Let $\ga$ be a basic classcial Lie superalgebra or $\qn$, and let
$\chi \in \ev \ga^*$ be nilpotent. Let $d_i = \dim \ga_i - \dim
(\ga_{\chi})_i,\; i \in \Z_2$. Then the dimension of every
$U_{\chi}(\ga)$-module $M$ is divisible by $p^{\frac{d_0}{2}}
2^{\lfloor \frac{d_1}{2}\rfloor}$, where $\lfloor
\frac{d_1}{2}\rfloor$ denotes the least integer which is $\geq
\frac{d_1}{2}$.
\end{theorem}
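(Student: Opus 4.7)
The plan is to derive the theorem directly from Proposition~\ref{W:prop:freeness}: since every $U_\chi(\ga)$-module $M$ is free over $U_\chi(\m)$, one automatically has
\[
\dim U_\chi(\m) = p^{\dim \ev\m}\, 2^{\dim \od\m}
\]
dividing $\dim M$. The rest is bookkeeping on dimensions, with a single genuine subtlety occurring when $r := \dim \ga_{-1,\bar 1}$ is odd.

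For the even part, I would first note that the nondegeneracy of $(\cdot|\cdot)$, together with the super analogue of \eqref{W:pairing} (which follows from the same argument as in Proposition~\ref{W:goodprop}), yields the symmetry $\dim \ga_{j,\bar i} = \dim \ga_{-j,\bar i}$ within each $\Z_2$-parity. Combining this with \eqref{W:grading5}, the quantity $d_0 = \dim \ev\ga - \dim(\ga_e)_{\bar 0}$ collapses to
\[
d_0 = \dim \ga_{-1,\bar 0} + 2\sum_{j \geq 2} \dim \ga_{j,\bar 0},
\]
which is exactly twice $\dim \ev\m = \dim \lag_{\bar 0} + \sum_{j \geq 2} \dim \ga_{-j,\bar 0}$, since $\dim \lag_{\bar 0} = \hf \dim \ga_{-1,\bar 0}$ by the Lagrangian choice. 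Hence $\dim \ev\m = d_0/2$, which supplies the $p^{d_0/2}$ factor. The identical computation in the odd parity produces $d_1 = r + 2\sum_{j \geq 2} \dim \ga_{j,\bar 1}$, so when $r$ is even, $\dim \od\m = d_1/2 = \lceil d_1/2\rceil$ and the theorem follows.

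The main obstacle, and the reason $\m'$ was introduced in \ref{W:sec:subalgm}, is the case $r$ odd: the symmetric form $\langle\cdot,\cdot\rangle$ on $\ga_{-1,\bar 1}$ then admits no true Lagrangian, and the naive bookkeeping yields only $\dim \od\m = (d_1-1)/2 = \lceil d_1/2\rceil - 1$, off by one from the target. The hard part will be recovering the missing factor of $2$ using the anisotropic middle vector $v_{(r+1)/2}$, which satisfies $\chi([v_{(r+1)/2}, v_{(r+1)/2}]) = 1$. My plan is to promote the freeness statement one step: either (i) strengthen Proposition~\ref{W:prop:freeness} to an assertion of $U_\chi(\m')$-freeness of $M$, giving divisibility by $\dim U_\chi(\m') = 2\dim U_\chi(\m) = p^{d_0/2} 2^{\lceil d_1/2\rceil}$; or (ii) keep the $U_\chi(\m)$-freeness and analyze the Clifford-type action of $v_{(r+1)/2}$ on the free $U_\chi(\m)$-module $M$ to show that the rank must be even. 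Option (i) is conceptually cleaner but requires checking that the Skryabin-style argument behind Proposition~\ref{W:prop:freeness} continues to work when the isotropy hypothesis on $\lag_{\bar 1}$ is relaxed by a single non-isotropic odd vector; I expect this to be where the real technical work lies.
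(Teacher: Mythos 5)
Your reduction of the theorem to Proposition~\ref{W:prop:freeness} plus dimension bookkeeping is exactly the paper's strategy, and the bookkeeping itself is correct: using the super version of \eqref{W:pairing} (equivalently, \eqref{W:m-dim}) you get $\dim\ev\m = d_0/2$ and $\dim\od\m = d_1/2$ for $r$ even, $(d_1-1)/2$ for $r$ odd, and the $r$ even case is immediate. You also correctly isolate the only genuine subtlety, the anisotropic odd vector $v_{(r+1)/2}$ with $\chi\bigl([v_{(r+1)/2},v_{(r+1)/2}]\bigr)=1$. Of your two proposed routes for $r$ odd, option~(ii) is the one the paper actually takes, so the overall approach matches.

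However, you leave the $r$ odd step as ``analyze the Clifford-type action \dots\ to show that the rank must be even,'' which is precisely where the content lies and is not carried out. Here is what the paper does: the induced module $V=U_\chi(\m')\otimes_{U_\chi(\m)}F_\chi$ is two-dimensional and irreducible, and $v_{(r+1)/2}$ gives it an odd involution, so $V$ is of type $\texttt Q$; by Frobenius reciprocity $V$ is the \emph{only} simple $U_\chi(\m')$-module, and $U_\chi(\m')/N_{\m'}\cong\mathfrak q_1^{\mathrm{as}}$, the Clifford superalgebra on one odd generator. Now $M^\m$ carries an action of $U_\chi(\m')/N_{\m'}$ (the $\m$-action on $M^\m$ is trivial, and $v_{(r+1)/2}$ normalizes $\m$), hence is a $\mathfrak q_1^{\mathrm{as}}$-module; since this superalgebra is semisimple with unique $2$-dimensional simple of type $\texttt Q$, $\dim M^\m$ is even. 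Combined with $M\cong U_\chi(\m)^*\otimes M^\m$ from $U_\chi(\m)$-freeness, this supplies the extra factor of $2$.

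On your option~(i): I would not pursue it. The Skryabin-type freeness argument behind Proposition~\ref{W:prop:freeness} hinges on $\chi$ restricting to a character of $\m$, which makes $U_\chi(\m)$ a local (super)algebra with one-dimensional residue field. This fails for $\m'$: precisely because $\chi\bigl([v_{(r+1)/2},v_{(r+1)/2}]\bigr)\neq0$, $\chi$ is not a character on $\m'$, and indeed the unique simple $U_\chi(\m')$-module is two-dimensional of type $\texttt Q$ rather than the trivial one-dimensional $F_\chi$. So the hypothesis you propose to ``relax by a single non-isotropic odd vector'' is load-bearing, and option~(i) would require a genuinely different freeness argument. The paper sidesteps this entirely by keeping $U_\chi(\m)$-freeness and importing the parity constraint on $\dim M^\m$ from the Clifford algebra as above.
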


\begin{proof}
By (\ref{W:grading5}) and since $\sudim \ga_j = \sudim \ga_{-j}$, we
have
\begin{equation}\label{W:m-dim}
\sudim \ga - \sudim \ga_{\chi} = 2\sum_{j \geq 2} \sudim \ga_{-j}
+ \sudim \ga_{-1}.
\end{equation}
In particular, $r :=\dim \ga_{-1, \bar 1}$ and $d_1$ have the same
parity. It follows now from the definition of $\m$ that either (1)
$\frac{d_0}{2} | \frac{d_1}{2} = \sudim \m$ when $d_1$ is even, or
(2) $\frac{d_0}{2} | \frac{d_1 - 1}{2} = \sudim \m$ when $d_1$ is
odd.

In case (1), the theorem follows immediately from
Proposition~\ref{W:prop:freeness}. Note that $d_1 =d_0$ and hence
the case (2) never occurs for $\ga =\qn$.

In case (2), the induced $U_{\chi}(\m')$-module $V = U_{\chi}(\m')
\otimes_{U_{\chi}(\m)} F$ is two-dimensional, irreducible, and
admits an odd automorphism of order $2$ induced from
$v_{\frac{r+1}{2}}$. By Frobenius reciprocity, it is the only
irreducible $U_{\chi}(\m')$-module. Thus, $U_{\chi}(\m')/ N_{\m'}$
is isomorphic to the simple associative superalgebra
$\mathfrak{q}_1^{\text{as}}$ (which coincides with $\mf q_1$ as a
vector superspace), and the unique simple $U_{\chi}(\m')$-module
$V$ is of type $\texttt{Q}$.

Hence, for each $U_{\chi}(\ga)$-module $M$, the subspace $M^{\m}$
of $\m$-invariants in $M$ , which coincides with $M^{\n}$, is a
module over the superalgebra $U_{\chi}(\m')/ N_{\m} \cong
\mathfrak{q}_1^{\text{as}}$. Since the (unique) simple module of
$\mathfrak{q}_1^{\text{as}}$ is two-dimensional, $\dim M^{\m}$ is
divisible by $2$. Now the isomorphism $M \cong U_{\chi}(\m)^*
\otimes M^{\m}$ implies the desired divisibility.
\end{proof}

\subsection{The modular $W$-superalgebras}
\label{W:subsec:modularW}

Denote by $\mathcal{Q}_{\chi}$ the induced $U_{\chi}(\ga)$-module
$U_{\chi}(\ga)\otimes_{U_{\chi}(\m)}F_{\chi}$.

\begin{definition}   \label{W:def:Wsuperalgebra}
The {\em (modular) $W$-superalgebra} associated to the pair $(\ga,
\chi)$ is the following associative $F$-superalgebra
$$
W_\chi =\text{End}_{U_{\chi}(\ga)}(\mathcal{Q}_{\chi})^{\op}.
$$
\end{definition}

Theorem~\ref{W:th:KW-N} can be somewhat strengthened in the
following form \cite{WZ1, WZ2}, which is a superalgebra
generalization of a theorem of Premet \cite[Theorem~2.3]{Pr2}.
\begin{theorem} \label{W:th:finW}
Set $\delta = \dim U_{\chi}(\m)$. Then $\mathcal{Q}_{\m}$ is a
projective $U_{\chi}(\ga)$-module and
\[
U_{\chi}(\ga) \cong M_{\delta} (W_\chi).
\]
\end{theorem}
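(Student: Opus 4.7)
My plan is to follow Premet's Morita-theoretic strategy \cite{Pr1, Pr2}, using the freeness Proposition~\ref{W:prop:freeness} as the essential input. The cornerstone will be a single dimension formula for Hom-spaces out of $\mathcal{Q}_\chi$, from which both projectivity and the matrix algebra structure will cascade.

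First I would establish: for every finite-dimensional $U_\chi(\ga)$-module $M$,
\[
\dim \Hom_{U_\chi(\ga)}(\mathcal{Q}_\chi, M) = (\dim M)/\delta.
\]
By Frobenius reciprocity this $\Hom$-space identifies with $\{v \in M \mid (a - \chi(a))v = 0, \, \forall a \in \m\}$. By Proposition~\ref{W:prop:freeness}, $M$ is $U_\chi(\m)$-free of rank $(\dim M)/\delta$; and since $U_\chi(\m)$ is a local (super-)Frobenius algebra with unique simple head and socle both equal to $F_\chi$, the space of $\chi$-twisted $\m$-invariants in $U_\chi(\m)$ itself is one-dimensional. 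Combining these yields the formula.

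The functor $\Hom_{U_\chi(\ga)}(\mathcal{Q}_\chi, -)$ is always left exact; since the identity above shows it is strictly dimension-preserving on short exact sequences, it is in fact exact, so $\mathcal{Q}_\chi$ is projective. Applied to $M = L_i$ for any simple $L_i$, the identity also gives $(\dim L_i)/\delta$ nonzero maps, so $L_i$ appears in the head of $\mathcal{Q}_\chi$, making $\mathcal{Q}_\chi$ a projective generator. Next, decompose $U_\chi(\ga) = \bigoplus_i P_i^{\oplus a_i}$ and $\mathcal{Q}_\chi = \bigoplus_i P_i^{\oplus b_i}$ into indecomposable projectives $P_i$ covering the simples $L_i$. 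The standard identifications $\Hom(U_\chi(\ga), L_i) \cong L_i$ and $\Hom(\mathcal{Q}_\chi, L_j) \cong \End(L_j)^{\oplus b_j}$ together with the Hom-dimension formula yield
\[
a_i = \frac{\dim L_i}{\dim \End(L_i)}, \qquad b_i = \frac{\dim L_i}{\delta \cdot \dim \End(L_i)},
\]
so $a_i = \delta \, b_i$ for every $i$. Consequently $U_\chi(\ga) \cong \mathcal{Q}_\chi^{\oplus \delta}$ as left $U_\chi(\ga)$-modules, and taking opposite endomorphism rings gives
\[
U_\chi(\ga) \cong \End_{U_\chi(\ga)}\bigl(\mathcal{Q}_\chi^{\oplus\delta}\bigr)^{\op} \cong M_\delta\bigl(\End_{U_\chi(\ga)}(\mathcal{Q}_\chi)^{\op}\bigr) = M_\delta(W_\chi).
\]

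The hard part will be the super-algebraic bookkeeping when simples of type $\texttt{Q}$ appear, or when $d_1$ is odd (Case~2 in the proof of Theorem~\ref{W:th:KW-N}). In those situations $\dim\End(L_i) = 2$ for some simples, and one must check that the integer $b_i$ is well-defined, that graded versus ungraded conventions for $\Hom$ are handled compatibly, and that the decomposition $U_\chi(\ga) \cong \mathcal{Q}_\chi^{\oplus \delta}$ respects the $\Z_2$-grading rather than holding only up to parity shift. The divisibility in Theorem~\ref{W:th:KW-N} (and, in Case~2, the refinement provided by the $\mathfrak{q}_1^{\text{as}}$-module structure on $M^\m$) is what supplies the extra factor of $2$ needed to make the accounting close.
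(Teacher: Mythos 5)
Your proof is correct and follows Premet's strategy, but with a cleaner organization than the paper's proof, centered on the single formula
\[
\dim \Hom_{U_\chi(\ga)}(\mathcal{Q}_\chi, M) = (\dim M)/\delta
\]
for \emph{all} finite-dimensional $M$ (not just simples). The paper instead lists the simples $V_i$ (type $\texttt{M}$) and $W_j$ (type $\texttt{Q}$), defines $a_i$ and $2b_j$ as $\Hom$-dimensions out of $\mathcal{Q}_\chi$, assembles the explicit projective $P = \bigl(\oplus P_i^{a_i}\bigr) \oplus \bigl(\oplus Q_j^{b_j}\bigr)$ with the same head as $\mathcal{Q}_\chi$, and then checks $\dim P = \dim U_\chi(\ga)/\delta = \dim \mathcal{Q}_\chi$ so that the resulting surjection $P \twoheadrightarrow \mathcal{Q}_\chi$ is an isomorphism. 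Projectivity of $\mathcal{Q}_\chi$ drops out as a corollary. Your argument derives projectivity earlier and more directly, from left-exactness plus the strict additivity of the $\Hom$-dimension on short exact sequences, and then reads the multiplicities out of the unified Wedderburn count $a_i = \dim L_i / \dim \End(L_i)$. This buys a shorter and more conceptual proof at the cost of hiding the type $\texttt{M}$ / type $\texttt{Q}$ case split that the paper displays explicitly.

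A few points worth making explicit in a write-up. First, the step that $\dim \Hom_{U_\chi(\m)}(F_\chi, U_\chi(\m)) = 1$ (i.e.\ that the socle of the local superalgebra $U_\chi(\m)$ is one-dimensional) is genuinely needed; the paper uses it implicitly when it passes from $a_i = \dim \Hom_\m(F_\chi, V_i)$ to ``the rank of the free $U_\chi(\m)$-module $V_i$ is $a_i$.'' Your invocation of $U_\chi(\m)$ being a local super-Frobenius algebra is the right way to package this, but for a restricted Lie superalgebra it deserves a sentence (it follows from the Frobenius property of $U_\chi(\m_{\bar 0})$ together with the exterior-algebra contribution of a homogeneous basis of $\m_{\bar 1}$). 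Second, as you anticipate, the integrality of $b_i = \dim L_i/(\delta\,\dim\End(L_i))$ for type $\texttt{Q}$ simples is exactly where the evenness argument from the proof of Theorem~\ref{W:th:KW-N} (the odd involution $J$ acting on the $\m$-invariants) enters; the paper makes this the defining property of $b_j$, whereas you must check it. Third, in the decomposition $U_\chi(\ga) \cong \oplus_i P_i^{a_i}$ one should either work in the category of modules forgetting parity (so that parity shifts $\Pi P_i$ are absorbed) or keep track of $\Pi$-multiplicities separately; the formula $a_i = \dim L_i/\dim\End(L_i)$ is the correct ungraded count, and this is how the paper implicitly proceeds as well. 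With these caveats spelled out, your argument closes.
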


\begin{proof}
Let $V_1, \ldots, V_s$ (resp. $W_1, \ldots, W_t$) be all
inequivalent simple $U_{\chi}(\ga)$-modules of type $\texttt{M}$
(resp. of type $\texttt{Q}$). Let $P_i$ (resp. $Q_j$) denote the
projective cover of $V_i$ (resp. $W_j$). By
Proposition~\ref{W:prop:freeness}, $V_i$ and $W_j$ are free over
$U_{\chi}(\m)$. It follows by Frobenius reciprocity that
\[
\dim \Hom_{\ga}(\mathcal{Q}_{\chi}, V_i)= \dim \Hom_{\m}(F_{\chi},
V_i) = :a_i.
\]
By Frobenius reciprocity,
\[
\dim \Hom_{\ga}(\mathcal{Q}_{\chi}, W_j)= \dim \Hom_{\m}(F_{\chi},
W_j) = \dim \Hom_{\m}(U_{\chi}(\m), W_j)
\]
which has to be an even number, say $2b_j$, since as a type
$\texttt{Q}$ module $W_j$ admits an odd involution commuting with
$\m$. It follows that the ranks of the free $U_{\chi}(\m)$-modules
$V_i$ and $W_j$ are $a_i$ and $2b_j$ respectively. Put
\[
P = (\bigoplus_{i=1}^{s}  P_i^{a_i}) \bigoplus (\bigoplus_{j=1}^t
Q_j^{b_j}).
\]
Then $P$ is projective and has the same head as
$\mathcal{Q}_{\m}$. So there is a surjective homomorphism $\psi: P
\to \mathcal{Q}_{\chi}$.

Since $\dim V_i = \delta a_i$ and $\dim W_j = 2\delta b_j$, by
Wedderburn theorem for superalgebras (cf. Kleshchev
\cite[Theorem~12.2.9]{Kle}) the left regular
$U_{\chi}(\ga)$-module is isomorphic to $P^{\delta}$. The equality
of dimensions
\[
\dim P = \dim U_{\chi}(\ga)/{\delta} = \dim \mathcal{Q}_{\m}
\]
implies that $\psi$ is an isomorphism. Finally,
\begin{eqnarray*}
U_{\chi}(\ga) &\cong&
\text{End}_{U_{\chi}(\ga)}(U_{\chi}(\ga))^{\op} \cong
\text{End}_{U_{\chi}(\ga)}(P^{\delta})^{\op}
\\
&\cong& (M_{\delta} (\text{End}_{U_{\chi}(\ga)}(P)))^{\op} \cong
M_{\delta} (W_\chi).
\end{eqnarray*}
This completes the proof of the theorem.
\end{proof}

\begin{remark}   \label{W:rem:Wsuper}
In light of Gan-Ginzburg's definition of $W$-algebras over $\C$,
it makes better sense to define a (modular) $W$-superalgebra over
$F$ as
$$
W_\chi' := (U_\chi(\ga)/\mf I_\chi)^{\ad \n} \equiv \{\bar{y} \in
U_\chi (\ga)/\mf I_\chi \mid [a, y] \in \mf I_{\chi}, \forall a
\in \n\},
$$
where $\mf I_\chi$ denotes the left ideal of $U_\chi(\ga)$
generated by $N_\m$, with multiplication $\bar{y}_1 \bar{y}_2 =
\overline{y_1y_2},$ for $\bar{y}_1, \bar{y}_2 \in \mc W_{\chi}'$.
In the case when $d_1$ (or equivalently, $\dim \ga_{-1, \bar 1}$)
is odd, $ W_\chi'$ is in general a subalgebra of $W_\chi \cong
(U_\chi(\ga)/\mf I_\chi)^{\ad \m}$.

Similar subtle differences appear in various constructions of
$W$-superalgebras over $\C$.
\end{remark}

\section{Further work and open problems}
\label{W:sec:open}

\subsection{Representations of $\W$ of type $A$}
\label{W:subsec:A}

The representation theory of finite $W$-algebras has been most
adequately developed for $\ga =\gl_N$.  Brundan and Kleshchev
\cite{BK1} showed that the finite $W$-algebras were quotient
algebras of what they introduced and called shifted Yangians,
associated to a given pyramid $\pi$. In the special case when
$\pi$ is of rectangular shape, such a connection between finite
$W$-algebras and the truncated Yangians (which are quotients of
the Yangians introduced by Drinfeld) was found by Ragoucy and
Sorba \cite{RS}, and the truncated Yangians were studied by
Cherednik \cite{Ch}. Also see Brown \cite{Br} for a generalization
of \cite{RS} to other classical Lie algebras.

Such a connection with shifted Yangians was subsequently explored
systematically in \cite{BK2}, where the finite-dimensional
$\W$-modules are classified. Moreover, a highest weight
representation theory of $\W$ is developed, and a Kazhdan-Lusztig
solution to the finite-dimensional irreducible character problem
of $\W$ is obtained. A formulation of Kazhdan-Lusztig conjectures
for general finite $W$-algebras was given by de Vos and van Driel
\cite{VD}.

\begin{remark} \label{W:affineW}
The affine Yangian associated to the affine algebra of $\ga$ have
been studied by Guay \cite{Gu}. We remark that the affine Yangian
for $\gl_1$ should be identified with the $\mc W_{1+\infty}$
algebra. The main result of \cite{FKRW} can be reformulated as
follows:
{\em The vertex algebra for the truncated affine Yangian of
$\gl_1$ at level $N$ is isomorphic to the affine $W$-algebra
associated to $\gl_N$ and a regular nilpotent element at level
$N$.}
\end{remark}

\subsection{Finite-dimensional $\W$-modules and primitive ideals}
\label{W:subsec:finite}

The importance of finite-dimensional $\W$-modules was realized by
Premet \cite{Pr3} in connection to primitive ideals for $U(\ga)$,
where he showed that the corresponding $\ga$-modules under the
Skryabin equivalence have the closure $\overline{\mc O}_e$ of a
nilpotent orbit as their associated varieties and $\hf \dim
\overline{\mc O}_e$ as their Gelfand-Kirillov dimensions. Further
different approaches toward connections between finite-dimensional
$\W$-modules and primitive ideals are developed in \cite{Gi, Lo1,
Lo2, Pr4, Pr5} (see \cite[Conjecture~3.2]{Pr3} for motivation).

Among all finite-dimensional $\W$-modules, the one-dimensional
modules stand out (cf. \cite[Conjecture~3.1]{Pr3}), and they are
studied intensively in \cite{Lo1, Lo4, Pr5, GRU}.

The upshot of all these developments is that finite-dimensional
modules are shown to exist for every $\W$, and one-dimensional
modules are shown to exist (modulo some open cases in $E_8$). The
existence of one-dimensional $W$-modules implies that the
dimensional bound provided by Kac-Weisfeiler conjecture is optimal
for characteristic $p \gg 0$ \cite{Pr5}.

\subsection{Category $\mc O$}
\label{W:subsec:catO}

The category $\mc O$ for finite $W$-algebras was well studied in
\cite{BK2} in type $A$. One needs to overcome substantial
technical difficulties to formulate the category $\mc O$ for
finite $W$-algebras in other types, see \cite{BGK, Lo3, Go, Pr3,
We} for research in this direction.

\subsection{Open problems}
\label{W:subsec:problems}

As in any reasonable representation theory, one of the basic
problems in $W$-algebras is the following.

\begin{problem}   \label{W:prob:KL}
Find a (Kazhdan-Lusztig type) solution to the irreducible
character problem for category $\mc O$ of $\W$-modules.
\end{problem}

As we have seen that finite $W$-superalgebras are natural subjects
of study in connections to the natural yet nontrivial superalebra
generalization of the Kac-Weisfeiler conjecture \cite{WZ1, WZ2}.
We remark that the queer Lie superalgebra $\qn$ is often referred
to as the truly superanalogue of $\gl_n$.

\begin{problem}   \label{W:prob:super}
Develop systematically the structures and representation theory of
finite $W$-superalgebras over $\C$, for both basic classical Lie
superalgebras and the queer Lie superalgebras.
\end{problem}
Various aspects of finite $W$-superalgebras over $\C$ are
currently being developed in the Virginia dissertation of
Yun-Ning~Peng.

\begin{problem}   \label{W:prob:quantum}
Formulate the notion of quantum finite $W$-algebras associated to
quantum groups $U_q(\ga)$ and then develop its representation
theory (for both $q$ being generic and $q$ being a root of unity).
\end{problem}
In the case of regular nilpotent elements, there has been a very
interesting work of Sevostyanov \cite{Sev}.

It is shown \cite{PPY} that the algebra of invariants of the
centralizer $\ga_e$ in the symmetric algebra of $\ga_e$ is often a
polynomial algebra in $\text{rank} (\ga)$ generators, even though
$\ga_e$ is in general non-reductive; also see \cite{BrB} for a
constructive approach in type $A$. One should compare with the
fact that the center of $\W$ is always isomorphic to $\mc Z(\ga)$,
which is independent of $\chi$ (see \cite{BK3} for type $A$ and
\cite[Footnote~2]{Pr3} in general).

We pose the problem of finding a skew analogue of the above
results.
\begin{problem}  \label{W:prob:skew}
When is the algebra of $\ga_e$-invariants in the exterior algebra
of $\ga_e$ an exterior algebra in $\text{rank} (\ga)$ generators?
\end{problem}
Note that the answer is affirmative when $e =0$ or $e$ is regular
nilpotent by classical results. A counterpart of this problem for
$\W$ would naturally lead to the following.

\begin{problem}   \label{W:prob:cohomW}
Develop a cohomology theory of finite $W$-algebras.
\end{problem}

The connection between shifted Yangians and finite $W$-algebras of
type $A$ is provided by explicit formulas \cite{BK1}. Being
explicit, these formulas greatly facilitate the detailed study in
type $A$, but on the other hand, they seem to be too rigid to
allow direct generalizations to other types or to the affine
setting. This leads to the following.

\begin{problem}   \label{W:prob:affineW}
Redevelop the connections between Yangians and finite $W$-algebras
in type $A$ from the BRST approach.
Formulate a generalization of Remark~\ref{W:affineW}, which should
be viewed as an affine analogue of the above connections in type
$A$.
\end{problem}


\begin{thebibliography}{ABCD1}

\bibitem[Ar]{Ar}
T. Arakawa, {\em Representation theory of $W$-algebras}, Invent.\
Math.\ {\bf 169} (2007), 219--320.

\bibitem[AS]{AS}
T. Arakawa and T. Suzuki, {\em Duality between
$\mathfrak{sl}_n(\mathbb{C})$ and the degenerate affine Hecke
algebra}, J. Algebra {\bf  209} (1998), 288--304.

\bibitem[BW]{BW} K.~Baur and N.~Wallach, {\em Nice parabolic
subalgebras of reductive Lie algebras}, Represent.\ Theory {\bf 9}
(2005), 1--29.

\bibitem[BT]{BT} J. de Boer and T. Tjin,
{\em Quantization and representation theory of finite
$W$-algebras}, Comm.\ Math.\ Phys.\ {\bf 158} (1993), 485--516.

\bibitem[BS]{BS} P. Bouwknegt and K. Schoutens,
{\em $\mc W$-symmetry}, Adv. series Math. Phys. {\bf 22},
Singapore, World Scientific, 1995.

\bibitem[Br]{Br}  J.~Brown,
{\em Twisted Yangians and finite W-algebras},  Transform.\ Groups
{\bf 14} (2009),  87--114.

\bibitem[BrB]{BrB}  J.~Brown and J. Brundan,
{\em Elementary invariants for centralizers of nilpotent
matrices}, J. Aust. Math. Soc. {\bf 86} (2009),1--15.

\bibitem[BG]{BG}
J. Brundan and S. Goodwin, {\em Good grading polytopes}, Proc.\
London Math.\ Soc.\ {\bf 94} (2007), 155--180.

\bibitem[BGK]{BGK}
J. Brundan, S. Goodwin and A. Kleshchev, {\em Highest weight
theory for finite $W$-algebras}, Internat.\ Math.\ Res.\ Notices,
{\bf 15} (2008), Art.\ ID rnn051.



\bibitem[BK1]{BK1}
 \bysame, and  \bysame, {\em Shifted Yangians and finite
$W$-algebras}, Adv.\ Math.\ {\bf  200} (2006), 136--195.

\bibitem[BK2]{BK2}
 \bysame, and  \bysame,  {\em Representations of shifted
Yangians and finite $W$-algebras}, Mem.\ Amer.\ Math.\ Soc. {\bf
196} (2008).

\bibitem[BK3]{BK3}
 \bysame, and  \bysame,  {\em Schur-Weyl duality for higher
levels}, Selecta Math {\bf 14} (2008), 1--57.

\bibitem[Ca]{Ca} R. Carter,
{\em Finite groups of Lie type: Conjugacy classes and complex
characters}, Pure and Applied Math.,
John Wiley \& Sons, Inc., New York, 1985.

\bibitem[Ch]{Ch}
I. Cherednik, {\em A new interpretation of Gelfand-Tzetlin bases},
Duke Math. J. {\bf 54} (1987), 563--577.

\bibitem[CR]{CR}
J. Chuang and R. Rouquier, {\em Derived equivalences for symmetric
groups and $\mathfrak{sl}_2$-categorification}, Annals Math.~{\bf
167} (2008), 245--298.

\bibitem[CMc]{CMc}
D. Collingwood and W. McGovern,
 {\em Nilpotent orbits in semisimple Lie algebras},
 Van Nostrand Reinhold Mathematics Series, Van Nostrand Reinhold Co., New York, 1993.

\bibitem[DHK]{DHK}
A.~D'Andrea, C.~De Concini, A. De Sole, R. Heluani and V. Kac,
{\em Three equivalent definitions of finite $W$-algebras},
appendix to \cite{DK}.

\bibitem[DK]{DK}
A. De Sole and V.~Kac, {\em Finite vs affine $W$-algebras},
Japan J.\ Math.\ {\bf 1} (2006), 137--261.

\bibitem[Dr]{Dr}
V. Drinfeld, {\em Degenerate affine Hecke algebras and Yangians},
Func. Anal. Appl.  {\bf 20} (1986), 56--58.

\bibitem[EK]{EK}
P. Elashvili and V. Kac, {\em Classification of good gradings of
simple Lie algebras}, Lie groups and invariant theory,
(E.~B.~Vinberg ed.), pp. 85--104, Amer.\ Math.\ Soc.\ Transl.\
{\bf 213}, AMS, 2005.

\bibitem[FRTW1]{FRTW1}
L. Feh\'er, L. O'Raifeartaigh, P. Ruelle, I. Tsutsui, and A. Wipf,
{\em Generalized Toda theories and W-algebras associated with integral gradings}, Ann. Phys. (N. Y.) {\bf 213} (1992), 1--20.

\bibitem[FRTW2]{FRTW2}  \bysame,  \bysame,   \bysame,  \bysame, and  \bysame,
{\em On Hamiltonian reductions of the Wess-Zumino-Novikov-Witten theories}, Phys. Rep. {\bf 222} (1992), 1--64.

\bibitem[FKRW]{FKRW} E. Frenkel, V. Kac, A. Radul and W. Wang,
{\em ${\mathcal {W}}_{1+\infty}\/$ and ${\mathcal {W}}(gl_N)\/$
with central charge $N\/$}, Comm. Math. Phys. {\bf 170} (1995),
337--357.

\bibitem[FKW]{FKW} E. Frenkel, V. Kac and M. Wakimoto,
{\em Characters and fusion rules for $W$-algebras via quantized
Drinfeld-Sokolov reduction}, Commun. Math. Phys. {\bf 147} (1992),
295--328.

\bibitem[FP]{FP} E. Friedlander and B. Parshall, {\em
Support varieties for restricted Lie algebras}, Invent. Math. {\bf
86} (1986), 553--562.

\bibitem[GG]{GG}
W. L. Gan and V. Ginzburg, {\em Quantization of Slodowy slices},
Internat.\ Math.\ Res.\ Notices {\bf 5} (2002), 243--255.

\bibitem[Gi]{Gi}
V.~Ginzburg, {\em Harish-Chandra bimodules for quantized Slodowy
slices}, arXiv:0807.0339.


\bibitem[Go]{Go}
S.~Goodwin, {\em Translation for finite $W$-algebras},
arXiv:0908.2739.

\bibitem[GRU]{GRU}
S.~Goodwin, G.~R\"ohrle and G.~Ubly, {\em On $1$-dimensional
representations of finite $W$-algebras associated to simple Lie
algebras of exceptional type}, arXiv:0905.3714.

\bibitem[GR]{GR}
I. Gordon and D. Rumynin, {\em Subregular representations of
$\mathfrak{sl}_n$ and simple singularities of type $A_{n-1}$},
Compositio Math. {\bf 138} (2003),  337--360.

\bibitem[Gu]{Gu}
N. Guay, {\em Affine Yangians and deformed double current algebras
in type A}, Adv. Math. {\bf 211} (2007), 436--484.

\bibitem[Ja1]{Ja1} J. Jantzen, {\em Representations of Lie algebras in prime
characteristic}, NATO Adv. Sci. Inst. Ser. C Math. Phys. Sci.,
{\bf 514}, Representation theories and algebraic geometry
(Montreal, PQ, 1997), 185--235, Kluwer Acad. Publ., Dordrecht,
1998.

\bibitem[Ja2]{Ja2}  \bysame, {\em Nilpotent orbits in representation
theory}, 1--211, Progr. Math., {\bf 228}, Birkh\"{a}user Boston,
Boston, MA, 2004.

\bibitem[KRW]{KRW} V.~Kac, S.~Roan and M.~Wakimoto,
{\em Quantum reduction for affine superalgebras}, Comm.\ Math.\
Phys.\ {\bf 241} (2003), 307--342.


\bibitem[Kle]{Kle} A. Kleshchev,
{\em  Linear and projective representations of symmetric groups},
 Cambridge Tracts in Mathematics {\bf 163},
 Cambridge University Press, 2005.

\bibitem[Ko]{Ko}
B. Kostant, {\em On Whittaker modules and representation theory},
Invent. Math.  {\bf 48} (1978), 101--184.

\bibitem[Lo1]{Lo1}
I.~Losev, {\em Quantized symplectic actions and $W$-algebras}, J.
Amer. Math. Soc. {\bf 23} (2010), 35--59. 

\bibitem[Lo2]{Lo2} \bysame,
{\em Finite dimensional representations of W-algebras},
arXiv:0807.1023.

\bibitem[Lo3]{Lo3} \bysame,
{\em On the structure of the category $\mathcal O$ for
$W$-algebras}, arXiv:0812.1584.

\bibitem[Lo4]{Lo4} \bysame,
{\em $1$-dimensional representations and parabolic induction for
$W$-algebras}, preprint,  arXiv:0906.0157.

\bibitem[Lo5]{Lo5} \bysame,
{\em Finite $W$-algebras}, ICM talk 2010, arXiv:1003.5811.

\bibitem[Ly]{Ly}
T. Lynch, {\em Generalized Whittaker vectors and representation
theory}, Ph.D. dissertation, M.I.T., 1979.

\bibitem[Mo]{Mo}
C. Moeglin, {\em Mod\`eles de Whittaker et id\'eaux primitifs
compl\`etement premier dans les alg\`ebres enveloppantes I}, C. R.
Acad. Sci. Paris  {\bf 303} (1986), 845--848; II,   Math. Scand.
{\bf 63} (1988), 5--35.

\bibitem[PPY]{PPY}
D. Panyushev, A. Premet and O. Yakimova, {\em On symmetric
invariants of centralisers in reductive Lie algebras}, J. Algebra
{\bf 313} (2007),  343--391.

\bibitem[Pr1]{Pr1} A. Premet,
{\em Irreducible representations of Lie algebras of reductive
groups and the Kac-Weisfeiler conjecture}, Invent. Math. {\bf 121}
(1995), 79--117.


\bibitem[Pr2]{Pr2} \bysame,
{\em Special transverse slices and their enveloping algebras},
Adv.\ in Math.\ {\bf 170} (2002), 1--55.

\bibitem[Pr3]{Pr3} \bysame,
{\em Enveloping algebras of Slodowy slices and the Joseph ideal},
J.\ Eur.\ Math.\ Soc.\ {\bf 9} (2007), 487--543.

\bibitem[Pr4]{Pr4} \bysame,
{\em Primitive ideals, non-restricted representations and finite
$W$-algebras}, Mosc.\ Math.\ J.\ {\bf 7} (2007), 743--762.

\bibitem[Pr5]{Pr5} \bysame,
{\em Commutative quotients of finite W-algebras}, preprint,
arXiv:0809.0663.


\bibitem[RS]{RS}
E. Ragoucy and P. Sorba,
{\em Yangian realisations from finite $W$-algebras},
Comm. Math. Phys. {\bf 203} (1999), 551--572.

\bibitem[Sev]{Sev}
A. Sevostyanov, {\em Regular nilpotent elements and quantum
groups}, Comm. Math. Phys. {\bf 204} (1999), 1--16.

\bibitem[Sk1]{Sk}
S. Skryabin, {\em A category equivalence}, {appendix} to
\cite{Pr2}.

\bibitem[Sk2]{Sk2} \bysame,
{\em Representations of the Poisson algebra in prime
characteristic}, Math. Z. {\bf 243} (2003), 563--597.

\bibitem[Slo]{Slo} P. Slodowy,
{\em Simple singularities and simple algebraic groups}, Lect.
Notes. in Math. {\bf 815}, Springer, Berlin, 1980.

\bibitem[VD]{VD} K.~de Vos and P.~van Driel,
{\em The Kazhdan--Lusztig conjecture for finite $W$-algebras},
Lett.\ Math.\ Phys.\ {\bf 35} (1995), 333--344.

\bibitem[WK]{WK} B. Weisfeiler and V.~Kac,
{\em On irreducible representations of Lie $p$-algebras}, Func.
Anal. Appl. {\bf 5} (1971), 111--117.

\bibitem[Wa]{Wa} W. Wang,
{\em Dimension of a minimal nilpotent orbit},
 Proc. Amer. Math. Soc. {\bf 127} (1999), 935--936.

\bibitem[WZ1]{WZ1} W. Wang and L. Zhao,
{\it Representations of Lie superalgebras in prime characteristic
I}, Proc. London Math. Soc. {\bf 99} (2009), 145--167.

\bibitem[WZ2]{WZ2} \bysame, and  \bysame,
{\it Representations of Lie superalgebras in prime characteristic
II: The queer series}, arXiv:0902.2758.

\bibitem[We]{We}  B. Webster,
{\em Singular blocks of parabolic category $\mc O$ and finite
$W$-algebras}, Preprint, arXiv:0909.1860.

\bibitem[Zh]{Zh} L. Zhao,
{\it Representations of Lie superalgebras in prime characteristic
III}, arXiv:0910.2077, Pacific J. Math (to appear).

\bibitem[Zhu]{Zhu} Y. Zhu,
{\em Modular invariance of characters of vertex operator
algebras}, J. Amer. Math. Soc. {\bf 9} (1996), 237--302.
\end{thebibliography}
\end{document}